\documentclass[12pt]{amsart}

\usepackage{amsmath, amsthm, amssymb, fullpage, amscd}

\usepackage[english]{babel}
\usepackage{eucal}
\usepackage{lineno}

\usepackage{thmtools}
\usepackage{float} 
\usepackage{thm-restate}
\usepackage{tikz-cd, pgf, tikz}
\usepackage{geometry}
\usepackage[colorlinks, bookmarks=true, backref=page]{hyperref}
\hypersetup{
	colorlinks,
	citecolor=blue,
	linkcolor=blue,
	urlcolor=blue}
\usepackage{xcolor}
\usepackage{mathtools, mathrsfs}
\usepackage{graphicx}
\usepackage{caption}
\usepackage{subcaption}
\usepackage{bm, bbm}
\usepackage{comment}
\usepackage{array}
\usepackage{latexsym}
\usepackage{stmaryrd}
\usepackage{commath,enumitem,chngcntr}
\usepackage[titletoc,toc,title]{appendix}
\usepackage{pst-node}
\mathtoolsset{showonlyrefs}
\usepackage{overpic}

\usetikzlibrary{arrows,calc,positioning}

\usepackage{natbib}
\usepackage{overpic}

\newcommand{\dmo}{\DeclareMathOperator}
\dmo{\Id}{Id}
\dmo{\pt}{pt}
\dmo{\ab}{ab}
\dmo{\GL}{GL}
\dmo{\SO}{SO}
\dmo{\SU}{SU}
\dmo{\SL}{SL}
\dmo{\PSL}{PSL}
\dmo{\im}{Im}
\dmo{\re}{Re}
\dmo{\Ad}{Ad}
\dmo{\ad}{ad}
\dmo{\tr}{tr}
\dmo{\Log}{Log}
\dmo{\Homeo}{Homeo}
\dmo{\Vol}{Vol}
\dmo{\supp}{supp}
\dmo{\Isom}{Isom}
\dmo{\Hom}{Hom}
\dmo{\Ext}{Ext}
\dmo{\Tor}{Tor}
\dmo{\PP}{\mathbf{P}}
\dmo{\Area}{Area}
\dmo{\Length}{Length}
\dmo{\inte}{int}
\dmo{\sech}{sech}
\dmo{\diag}{diag}

\newcommand{\C}{\mathbb C}
\newcommand{\R}{\mathbb R}

\newcommand{\Z}{\mathbb Z}

\newcommand{\T}{\mathcal T}
\newcommand{\D}{\mathbb{D}}

\renewcommand{\H}{\mathbb H}
\newcommand{\HH}{\mathcal H}

\renewcommand{\tr}{\mathrm {tr}}
\newcommand{\lan}{\langle}
\newcommand{\ran}{\rangle}

\newcommand{\from}{\colon}

\newtheorem{theorem}{Theorem}[section]
\newtheorem{corollary}[theorem]{Corollary}
\newtheorem{proposition}[theorem]{Proposition}
\newtheorem{lemma}[theorem]{Lemma}

\newtheorem{problem}[theorem]{Problem}

\theoremstyle{definition}
\newtheorem{definition}[theorem]{Definition}

\theoremstyle{remark}
\newtheorem{remark}[theorem]{Remark}

\usepackage{lipsum}
\let\OLDthebibliography\thebibliography
\renewcommand\thebibliography[1]{
	\OLDthebibliography{#1}
	\setlength{\parskip}{2pt}
	\setlength{\itemsep}{0pt plus 0.3ex}
}

\numberwithin{equation}{section}

\title{A Rigidity Theorem for Convex Sets in Hyperbolic 3-Space}
\author{Feng Luo}
\address{Department of Mathematics, Rutgers University--New Brunswick, Piscataway, NJ 08854}
\email{fluo@math.rutgers.edu  }

\author{Yanwen Luo}
\address{Department of Mathematics, Oklahoma State University,
Stillwater, OK 74078}
\email{yanwen.luo@okstate.edu}

\author{Zhenghao Rao}
\address{Department of Mathematics, Rutgers University--New Brunswick, Piscataway, NJ 08854}
\email{zhenghao.rao@rutgers.edu}

\date{}

\begin{document}

\begin{abstract} 

Pogorelov's rigidity theorem states that a compact convex body in the hyperbolic 3-space is determined up to isometry by the intrinsic path metric on its boundary.  The main result of this paper addresses a rigidity problem for non-compact closed convex 3-dimensional subsets in hyperbolic 3-space. We show that the intrinsic path metric on the boundary determines a closed convex set up to isometry, provided that the set of limit points of the convex set at infinity of the hyperbolic 3-space has vanishing 1-dimensional Hausdorff measure, i.e., 
zero length.  Furthermore, this zero-length condition is optimal.  This can be considered as an analogue of the Painlev\'e removability theorem in complex analysis, which states that 
sets of zero length are removable for bounded holomorphic functions.   
As a corollary, we show that if the underlying complex structure of a connected polyhedral surface is of parabolic type, then it is discrete conformal, unique up to scaling,  to a complete flat surface marked with a discrete subset.    The proof uses Pogorelov's rigidity theorem for compact convex bodies in $\R^3$, the Pogorelov map, and the Tabor--Tabor theorem on the extension of locally convex functions.

\end{abstract}

\maketitle

\section{Introduction}\label{sec:intro}

\noindent {\bf Background and main result.} Cauchy's rigidity theorem of convex polyhedra, proved in 1813, marks the birth of global rigidity theory in geometry.  It states that if $P$ and $Q$ are two compact convex polyhedra in the Euclidean 3-space $\mathbb{R}^3$ whose boundaries are isometric via an isometry preserving the combinatorial structure, then the two polyhedra differ by a rigid motion of the 3-space. 
Alexandrov~\cite{Alexandroff, Alexandrov} later strengthened Cauchy’s result by removing the requirement that the boundary isometry preserves the combinatorial structure, thereby establishing rigidity under purely metric assumptions. One of the most remarkable results in the theory of rigidity of convex bodies is Pogorelov’s rigidity theorem~\cite{Pogorelov49}, which extends rigidity phenomena far beyond the polyhedral setting.

\begin{theorem}[Pogorelov] \label{pogo1} If $P$ and $Q$ are two compact convex bodies in the 3-dimensional Euclidean space $\R^3$ whose boundaries are isometric with respect to the intrinsic path metrics, then any isometry between the boundaries of $P$ and $Q$ extends to an isometry of $\R^3$. 
\end{theorem}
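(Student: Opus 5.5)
The plan is to reduce Theorem~\ref{pogo1} to the smooth and polyhedral cases by approximation, and then to use an infinitesimal-rigidity argument to pass rigidity to the limit. There are three stages.

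\textbf{Stage 1: known regular cases.} For compact convex polyhedra, Alexandrov's theorem~\cite{Alexandroff, Alexandrov} gives rigidity under a purely metric boundary isometry. For convex bodies with $C^2$ boundary of positive Gauss curvature, the Cohn-Vossen/Herglotz integral formula gives rigidity. Given two such surfaces $S_1,S_2$ and an isometry $f$, one forms the Herglotz integrand from the two second fundamental forms transported by $f$. Its integral vanishes. Pointwise it is nonpositive, because $\det(II_1-II_2)\le 0$ when $\det II_1=\det II_2$ (Gauss equation), with both forms positive definite. Hence $II_1=II_2$, and the fundamental theorem of surfaces gives a rigid motion.

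\textbf{Stage 2: approximation and convergence.} Let $P,Q$ be general and $f:\partial P\to\partial Q$ an intrinsic isometry. I will not approximate $P$ and $Q$ independently, since that loses the isometry. Instead I approximate the metric $d_P$ of $\partial P$ by convex polyhedral metrics $d_n$. Concretely, take a fine geodesic triangulation with vertices dense in $\partial P$ and use Alexandrov's gluing, which gives total curvature $\le 2\pi$ at vertices. By Alexandrov's realization theorem, $d_n$ is realized uniquely, up to motion, by convex polyhedra $P_n$. Since the metric on $\partial Q$ is the same abstract metric via $f$, both $P_n\to P$ and the realization of $d_n$ also converge to $Q$ up to motions. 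This uses Blaschke selection together with the continuity result that Hausdorff limits of convex surfaces carry the limit of their intrinsic metrics (Alexandrov's convergence theorem). Uniqueness for the $P_n$ then forces $P$ and $Q$ to be congruent.

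The main obstacle is that this argument only gives \emph{some} congruence of $P$ and $Q$. It does not show that the given isometry $f$ extends: the composition $g^{-1}\circ f$ is an intrinsic self-isometry of $\partial P$, and it need not yet be extrinsic. Alexandrov's convergence also requires care, namely uniform convergence of intrinsic metrics under Hausdorff convergence of convex bodies.

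\textbf{Stage 3: extending the given isometry.} I expect this to be the hardest step. I would first try marking: choose the approximating polyhedral metrics $d_n$ on $\partial P$ with marked vertex sets $V_n$, and realize the metrics through $f$ on $\partial Q$ using the vertex sets $f(V_n)$. The uniqueness in Alexandrov's theorem comes with extension of the given isometry, since polyhedral isometries are determined on vertices. So the polyhedral rigid motions $g_n$ satisfy $g_n|_{V_n}\approx f|_{V_n}$ after the limit. A subsequence of the $g_n$ converges to a motion $g$ with $g=f$ on a dense set, hence everywhere by continuity.

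The delicate point is that the realization of $(\partial P,d_n)$ must converge to $P$ pointwise on marked vertices, not just in Hausdorff distance. For this I would use the pointwise form of the convergence theorem: intrinsic-metric convergence of convex surfaces gives uniform convergence of the parametrizations, modulo motions. If that fails, the fallback is Pogorelov's original route, which proves the regular case for the given $f$ via the Herglotz argument applied to smoothings, together with a-priori estimates showing that the nonpositivity survives in the limit.
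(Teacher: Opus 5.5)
The paper does not prove Theorem~\ref{pogo1}. It quotes it from \cite{Pogorelov49} and uses it as a black box, so your proposal has to stand on its own. It has a genuine gap at its central step. In Stage~2 the metric $d_n$ is built from the abstract metric $d_P$ alone. Its Alexandrov realization $P_n$ is therefore one and the same polyhedron, whether you view it as approximating $\partial P$ or (via $f$) $\partial Q$. Blaschke selection plus Alexandrov's convergence theorem then give only this: a subsequence of the $P_n$, after motions, converges to \emph{some} convex body $P'$ whose boundary is intrinsically isometric to $\partial P$. The assertion ``$P_n\to P$'' (and likewise ``$\to Q$'') is unjustified. Identifying $P'$ with $P$ is exactly the statement that a convex body is determined by its boundary metric, i.e.\ Theorem~\ref{pogo1}. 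Uniqueness of each polyhedral realization does not pass to limits. This limiting scheme is precisely Alexandrov's \emph{existence} proof for general convex metrics, and it says nothing about uniqueness. Your construction relocates the dilemma you noticed rather than resolving it. If you approximate $P$ and $Q$ directly (e.g.\ by inscribed polyhedra with vertices $V_n$ and $f(V_n)$), you keep convergence but lose the isometry. If you approximate the metric, you keep the isometry but lose convergence to $P$ and to $Q$. Either way, what is missing is a quantitative stability estimate, uniform in the number of vertices, that bounds the extrinsic discrepancy of two convex surfaces by the discrepancy of their intrinsic metrics. Estimates of this kind were proved later (e.g.\ by Volkov) and do recover Pogorelov's theorem, but they are deep results and are not part of your argument.

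Stage~3 makes the circularity explicit. Because $f$ is an isometry, the marked metrics on $(\partial P,V_n)$ and $(\partial Q,f(V_n))$ coincide. So the motions $g_n$ compare two copies of a single polyhedron and carry no information about $P$, $Q$ or $f$. Getting $g_n|_{V_n}\approx f|_{V_n}$ would require the realized vertices to lie near $V_n\subset\partial P$ and near $f(V_n)\subset\partial Q$ at the same time. For this you invoke a ``pointwise form of the convergence theorem'': intrinsic convergence of convex surfaces implies uniform convergence of parametrizations modulo motions. That is not an available lemma. Apply it to the constant sequence $\partial Q$, with its metric pulled back to $\partial P$ by $f$: it says directly that $f$ is the restriction of a motion. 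So it is Theorem~\ref{pogo1} itself, indeed a stronger, stable form of it. The fallback does not help either. Smoothing $P$ and $Q$ separately destroys $f$, so there is no pair of isometric smooth surfaces to which the Herglotz argument of Stage~1 applies. The phrase ``a-priori estimates showing that the nonpositivity survives in the limit'' names the entire difficulty without supplying it. The infinitesimal-rigidity argument announced in your first sentence never appears, and it would face the same lack of uniformity under limits.
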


To extend the result to compact convex bodies in the spherical 3-space $\mathbb S^3$ and the hyperbolic 3-space $\H^3$, 
Pogorelov~\cite{Pogorelov} introduced what is now known as the \emph{Pogorelov map} and used it to prove that Theorem \ref{pogo1} remains valid in both settings.
The purpose of this paper is to prove the following theorem, which is an extension of Pogorelov's theorem to non-compact convex sets in the hyperbolic 3-space.

\begin{theorem} \label{main2} Suppose $X_1$ and $X_2$ are two closed, non-compact convex sets of dimension at least two in the Poincar\'e model $\H_P^3$ of the hyperbolic 3-space such that their boundaries, in the intrinsic path metrics, are isometric and the 1-dimensional Hausdorff measures of the intersections of the closures of $X_i$ in $\R^3$ with $\partial \H_P^3$ are zero. Then any isometry between the boundaries of $X_1$ and $X_2$ extends to an isometry of $\H^3_P$. Moreover, the assumption of the vanishing 1-dimensional Hausdorff measure is sharp.
\end{theorem}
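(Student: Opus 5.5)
Throughout, $X_1,X_2$ are as in Theorem~\ref{main2}, $f\colon\partial X_1\to\partial X_2$ is the given intrinsic isometry, and $\Lambda_i=\overline{X_i}\cap\partial\H^3_P$ is the limit set of $X_i$.

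The plan is to reduce to Euclidean rigidity via the Pogorelov map and then to Pogorelov's Theorem~\ref{pogo1}, after first extending the non-compact objects across their ideal boundaries; the zero-length hypothesis enters exactly at that extension step. Concretely, pass to the Klein model, where $X_i$ becomes a convex set $K_i\subset B^3$ whose Euclidean closure $\overline{K_i}$ is a compact convex body. Its frontier is $\partial X_i\cup\Lambda_i$ in the Klein picture; the Klein and Poincar\'e models agree on the sphere at infinity and the identification is bi-Lipschitz there, so $\Lambda_i$ still has zero $1$-dimensional Hausdorff measure. The Pogorelov map $\mathcal P$ sends pairs of points in $B^3$ to $\R^3$. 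It has the property that if two convex surfaces in $\H^3$ are isometric via $f$, then the pair of surfaces $\mathcal P(x,f(x))$ are isometric Euclidean convex surfaces (Pogorelov's construction for the compact case). I would apply $\mathcal P$ to $\partial X_1,\partial X_2$ locally to obtain two locally convex Euclidean surfaces $S_1,S_2$ with an intrinsic isometry between them.

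The key steps, in order, are as follows.

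\emph{Step 1.} Show that the intrinsic metric completion of $\partial X_i$ adds exactly one point for each component-type end, and relate these ends to $\Lambda_i$. More precisely, the frontier of $\overline{K_i}$ is a topological sphere (or a disk if $X_i$ is $2$-dimensional, which I treat by doubling). Then $\partial X_i$ equals this sphere minus the closed set $\Lambda_i$, which has zero length. The point is that a set of zero length does not separate and cannot be "seen" metrically.

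\emph{Step 2.} Describe $S_i$ as the graph-like image of $\partial X_i$ in $\R^3$, locally given by locally convex functions. Using the Tabor--Tabor extension theorem, a locally convex function defined on the complement of a set of zero $1$-dimensional measure in a convex domain (with appropriate boundedness) extends to a convex function. I would apply this in local charts near each point of $\Lambda_i$ to conclude that the closures $\overline{S_i}$ are boundaries of compact convex bodies in $\R^3$.

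\emph{Step 3.} Extend the isometry $S_1\to S_2$ to the closures. Since the added set has zero length, paths can be perturbed off it at arbitrarily small cost. Hence the intrinsic metric of $\overline{S_i}$ restricted to $S_i$ agrees with that of $S_i$, and the isometry extends by completion.

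\emph{Step 4.} Apply Theorem~\ref{pogo1} to get a Euclidean motion. Then invert the Pogorelov correspondence, which preserves "congruence" in the sense that it maps a rigid motion back to a hyperbolic isometry, to conclude that $f$ extends to an isometry of $\H^3_P$.

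The main obstacle is Step 2 together with the construction in Step 1. We must verify that the Pogorelov-mapped surfaces remain locally convex up to $\Lambda_i$ and stay bounded, since $\mathcal P$ is built from both $X_1$ and $X_2$ simultaneously. We must also control how $\Lambda_1$ and $\Lambda_2$ correspond under $f$, because an abstract isometry of boundaries does not a priori match limit sets. I would first show that $f$ extends to a homeomorphism of the sphere frontiers, via the metric completion plus zero length. I would then check that the images of $\Lambda_i$ in the Euclidean picture still have zero length, so that the Tabor--Tabor theorem applies.

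Sharpness would be shown separately by a counterexample. Take $\Lambda$ to be a circle arc, or a positive-length set. A convex set whose boundary is foliated by a family of totally geodesic pieces along a curve at infinity admits a nontrivial bending that preserves the intrinsic metric but not the extrinsic shape; an example is a region bounded by two half-planes meeting along a geodesic, bent along it.
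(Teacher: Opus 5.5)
\textbf{Case where both $X_i$ are $3$-dimensional.} Your Steps 2--4 follow the paper's route: Pogorelov map to radial locally convex surfaces $S_i$, Tabor--Tabor extension to compact convex bodies, invariance of the intrinsic metric under deleting a compact zero-length set, extension of the isometry by density, Pogorelov's theorem, and pulling the Euclidean motion back to $\H^3$.

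Step~1, however, is misconceived. $\partial X_i$ is already complete in its intrinsic metric, and ideal points are at infinite distance. So metric completion adds nothing and cannot be used to extend $f$ to $\Lambda_i$. You also never need to match $\Lambda_1$ with $\Lambda_2$.

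The point you only flagged is settled by ray preservation. $P_1(x,f(x))$ lies on the ray through $\PP(x)$, and $P_2(x,f(x))$ lies on the ray through $\PP(f(x))$. So, once $e$ is interior to both sets, the directions missed by $S_i$ are exactly $\Lambda_i$. The rest then goes as follows:
\begin{itemize}
\item The gauge function $q_i(\alpha)=\inf\{k>0:\alpha/k\in S_i\}$ is locally convex on an open cone whose complement has $\mathfrak H^2=0$. Hence Tabor--Tabor applies once in $\R^3$, with no local charts to glue.
\item The bound $\|s\|<2$ on $S_i$ gives compactness of the extended body.
\item Vre\'cica's Lipschitz inverse radial projection gives $\mathfrak H^1(\tilde S_i-S_i)=0$.
\item To pull the Euclidean motion $\phi$ back to $\H^3$ you need $\|\phi(0)\|<2$.
\end{itemize}

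\textbf{The genuine gap: the $2$-dimensional cases.} The theorem explicitly includes these, and you dismiss them with ``treat by doubling''. The Pogorelov machinery (Lemma~\ref{lem:star-shaped} and Theorem~\ref{lem:local-convex}) needs both sets to be $3$-dimensional with $e$ in their interiors, so that each boundary is a radial surface about $e$. The double $D(X_1)$ is not such a surface: its two sheets sit over the same points of $\H^3$. So the image $\{P_1(x,f(x))\}$ is not radial and has no reason to be locally convex, and Steps 2--4 have nothing to act on.

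The paper needs two separate arguments here.
\begin{itemize}
\item \emph{Both $X_i$ $2$-dimensional.} It shows directly that an isometry $D(X_1)\to D(X_2)$ maps the seam $Y_1$ onto $Y_2$. This uses that a segment inside one sheet is the unique shortest path in the double. It also uses zero length to rule out $X_1$ being bounded by one or two complete geodesics.
\item \emph{Exactly one $X_i$ $3$-dimensional.} Here the theorem implicitly asserts impossibility, and this is a genuinely new rigidity statement. The paper conjugates the sheet-swapping reflection of $D(X_1)$ by $f$ to get an isometric involution of $\partial X_2$. The $3$-dimensional case realizes it by an involution $\phi$ of $\H^3$. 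A $\pi$-rotation is ruled out, so $\phi$ is a reflection in a plane, and the ideal points yield a compact piece of $\partial X_2$ away from that plane. Hence there is a point $p$ with a supporting plane meeting $\partial X_2$ only at $p$. The Busemann-type Lemma~\ref{lem:injection} then produces an intrinsically convex cap $U$ around $p$. Finally, compact Pogorelov rigidity is contradicted: the double of $U$ bounds a $3$-dimensional body, while the double of $f^{-1}(U)$ is a doubled flat region.
\end{itemize}
None of this is recoverable from your outline.

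\textbf{Sharpness.} Your wedge example does work. Two half-planes meeting along a geodesic at different dihedral angles have boundaries intrinsically isometric to $\H^2$, yet the wedges are non-congruent. However, its limit set is a lune of positive area, not a circle arc. Thurston's example, the convex hulls of a square and of a non-square rectangle, is stronger: it shows that rigidity fails even when the limit sets have finite positive length.
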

We remark that if one of the $X_i$ is 2-dimensional in the above theorem, the ``boundary of $X_i$" is defined to be the metric double of $X_i$ in the sense of Alexandrov~\cite{BBI}. 
Here,  the metric double of  $X_i$ is obtained by gluing two isometric copies of $X_i$ along their 1-dimensional boundary. The resulting space is a closed Alexandrov surface of non-negative curvature. 
See Section \ref{subsec:2d-2d} for details.   Theorem \ref{main2} reduces to a result of I. Rivin~\cite{Rivin} when $X_i$ are the hyperbolic convex hulls of finite sets of points contained in $\partial \H^3$.

There has been much work on extending Pogorelov's theorem to non-compact 3-dimensional convex sets in hyperbolic 3-space. 
See \cite{ Rivin, Schlenker2001, Schlenker2002,  BO04, Schlenker06, Fillastre, FI09} and others. Rigidity of non-compact convex sets in $\H^3$  arises naturally in our investigation of discrete conformal geometry of polyhedral surfaces. Indeed, the discrete Schwarz lemma and discrete Liouville's theorem can be translated to special types of rigidity of non-compact convex sets in $\H^3$. 
However, the naive extension of Pogorelov's theorem to closed 3-dimensional convex sets in $\H^3$ is false. Here is a counterexample constructed by William Thurston~\cite{Thurston}. Take two Jordan curves $J_1$ and $J_2$ in the boundary sphere at infinity $\partial \H^3$ of the hyperbolic 3-space such that $J_i$ are not round circles,  and let $X_i$ be the hyperbolic convex hull of $J_i$ in $\H^3$. Thurston proved that the boundaries of $X_1$ and $X_2$ are isometric. Indeed, $\partial X_i$ is isometric to the disjoint union of two copies of the hyperbolic plane $\H^2$. Now $X_1$ and $X_2$ differ by an isometry of $\H^3$ if and only if $J_1$ and $J_2$ differ by a M\"obius transformation. Thus, if one takes $J_1$ to be a square and $J_2$ to be a non-square rectangle, then $X_1$ and $X_2$ produce the counterexample. It shows that the vanishing 1-dimensional Hausdorff measure assumption is essential.

For a  set $X \subset \partial \H^3$ containing at least three points, let $C(X)$ be the hyperbolic convex hull of $X$ in $\H^3$. The following corollary is a straightforward consequence of Theorem \ref{main2}.

\begin{corollary} \label{cor3} If $X$ and $Y$ are two compact subsets of $\partial \H^3$ of zero 1-dimensional Hausdorff measure such that $\partial C(X)$ is isometric to $\partial C(Y)$, then $X$ and $Y$ differ by a M\"obius transformation.
\end{corollary}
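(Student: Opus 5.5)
The plan is to apply Theorem \ref{main2} to $X_1 = C(X)$ and $X_2 = C(Y)$ and then read off the conclusion on the sphere at infinity. The argument has four steps.

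\textbf{Step 1: the hypotheses of Theorem \ref{main2} hold.} Since $X$ contains at least three points, $C(X)$ contains an ideal triangle. Hence $C(X)$ has dimension at least two, and it is non-compact. It is closed in $\H^3$ because it is the hyperbolic convex hull of a compact subset of $\partial\H^3$.

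\textbf{Step 2: the limit set of $C(X)$ is exactly $X$.} Work in the Poincar\'e model. I would show that the closure of $C(X)$ in $\R^3$ meets $\partial\H_P^3$ exactly in $X$. The inclusion $X \subseteq \overline{C(X)}\cap\partial\H^3_P$ is clear. For the converse, take $p \in \partial\H^3_P \setminus X$. Because $X$ is compact, there is a round open disk $D \subset \partial\H^3_P$ with $p\in D$ and $\overline D\cap X=\emptyset$. The hyperbolic half-space bounded by the totally geodesic plane with ideal boundary $\partial D$, on the side away from $D$, is convex and contains $X$ in its closure. It therefore contains $C(X)$, and its Euclidean closure avoids $p$. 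Consequently $\overline{C(X)}\cap\partial\H^3_P = X$, which has zero 1-dimensional Hausdorff measure by hypothesis. The same holds for $Y$.

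\textbf{Step 3: the dimension-two case.} If one of the sets, say $C(X)$, is 2-dimensional, then $X$ lies in a round circle, and $\partial C(X)$ has to be read as the metric double in the sense of the remark after Theorem \ref{main2}. I would check that $\partial C(X)$ isometric to $\partial C(Y)$ is meant in that doubled sense, so that Theorem \ref{main2} applies verbatim. Apart from this reading of the hypothesis, there is no real obstacle in this step.

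\textbf{Step 4: passing to the boundary at infinity.} Theorem \ref{main2} now gives an isometry $\phi$ of $\H^3_P$ with $\phi(C(X)) = C(Y)$. Every isometry of $\H^3$ extends continuously to a M\"obius transformation $\hat\phi$ of $\partial\H^3_P$, and the extension of $\phi$ to the Euclidean closure maps $\overline{C(X)}$ onto $\overline{C(Y)}$. By Step 2, $\hat\phi(X) = Y$.

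The only step requiring genuine care is the limit-set identification in Step 2, and the supporting half-space argument handles it.
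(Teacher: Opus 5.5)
Your proposal is correct and is the intended argument: the paper gives no separate proof and simply calls the corollary a straightforward consequence of Theorem \ref{main2}, and your Steps 1--4 supply the omitted details, notably the half-space argument showing $\overline{C(X)}\cap\partial\H^3_P = X$. One small point to tighten in Step 4: Theorem \ref{main2} literally gives $\phi$ with $\phi(\partial C(X))=\partial C(Y)$, not $\phi(C(X))=C(Y)$, so you should add a line bridging the two. For example, two closed convex sets with nonempty interior and the same boundary either coincide or are complementary closed half-spaces, and the latter is excluded because $Y$ has zero length; in the $2$-dimensional case the proof of case (ii) already gives $\phi(C(X))=C(Y)$ directly.
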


This corollary can be considered as a parallel result to the well-known Painlev\'e's removability theorem in complex analysis~\cite{Tolsa, Younsi}. 
Namely, if $X \subset \C$ is a compact set of vanishing 1-dimensional Hausdorff measure, and $X$ is contained in an open set $U$ in the plane, then any bounded holomorphic function $f$ defined on $U-X$ extends to a holomorphic function on $U$.  More specifically, Painlev\'e's theorem implies that if $X$ and $Y$ are compact zero-length sets in $\C$ such that $\C-X$ is biholomorphic to $\C-Y$, then $X$ and $Y$ differ by a M\"obius transformation. 

\bigskip
\noindent{\bf Applications to discrete conformal geometry.}
Taking hyperbolic convex hulls establishes a relationship between complex analysis and convex hyperbolic surfaces.
Given an open set $\Omega$ in the complex plane $\C$, considered to lie on the boundary at infinity of the upper-half-space model $\H^3_U$ of the hyperbolic 3-space, one considers the Poincar\'e metric and holomorphic functions on $\Omega$ and also the boundary of the convex hull $\partial C(X)$ where $X =\mathbb{C} \cup \{\infty\}-\Omega : = \Omega^c$. 
Thurston~\cite{Thurston, EM87}
proved that the convex surface $\partial C(X) \subset \H^3_U$, usually called Thurston's dome over $\Omega$, is complete hyperbolic with respect to its intrinsic path metric. Topologically, Thurston's dome $\partial C(X)$ is homeomorphic to the open set $\Omega$ by the vertical projection map $(x_1, x_2, x_3) \mapsto (x_1, x_2)$. If we take $\Omega$ to be a simply connected domain such that $\Omega^c$ contains more than one point, then by the Riemann mapping theorem, there exists a bi-holomorphic map $R: \Omega \to \D$, the unit open disk. On the other hand, Thurston's theorem implies there exists an isometry $T: \partial C(\Omega^c) \to \partial C(\D^c)$. See Figure \ref{tst1} for an illustration.  It shows that Thurston's isometry $T$ is a hyperbolic geometric counterpart of the Riemann mapping $R$. For more details about Thurston's dome, see \cite[Chapter 8]{BishopRMT}, \cite{EM87} and others. 

\begin{figure}[htbp]
  \centering
  \begin{minipage}[t]{0.45\textwidth}
    \centering
    \includegraphics[width=\textwidth]{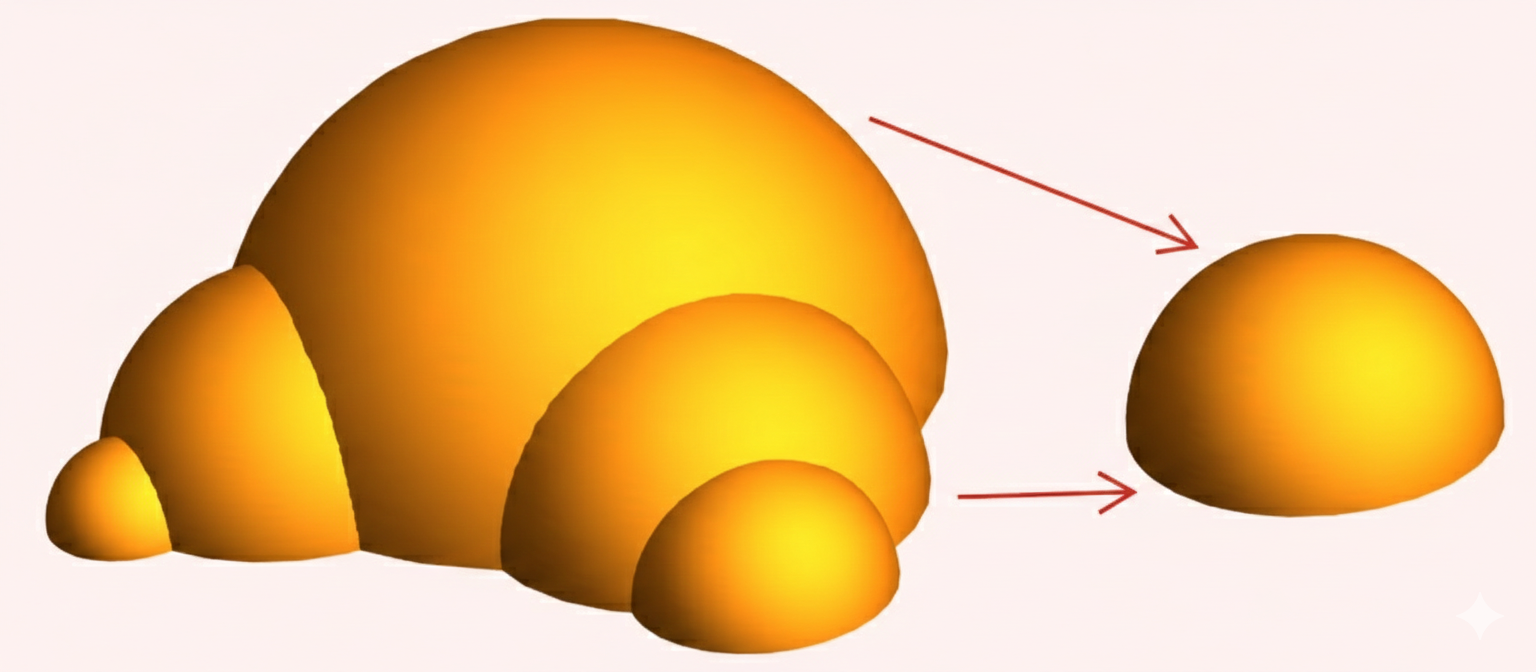}
    \caption{Thurston's isometry $T$ between domes and the Riemann mapping $R$ between the domains below domes.}
     \label{tst1}
  \end{minipage} 
  \hspace{0.5cm}
  \begin{minipage}[t]{0.5\textwidth}
    \centering
    \includegraphics[width=\textwidth]{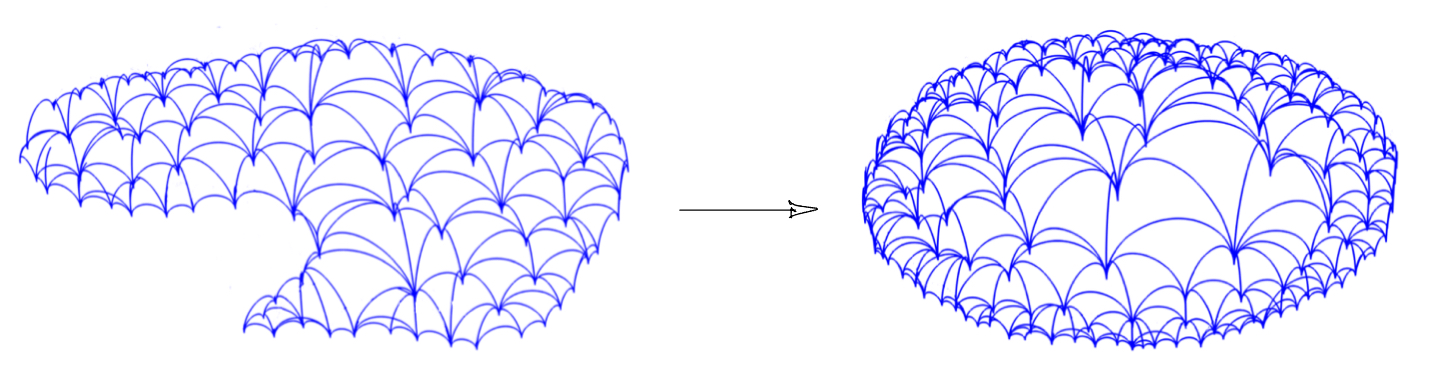}
    \caption{The discrete Riemann mapping $f:V \to W$ is induced by the isometric map between the domes over $\Omega-V$ and $\D-W$.}
          \label{drm3}
  \end{minipage} 
 \end{figure}

The motivation for us to work on Theorem \ref{main2} comes from computational conformal geometry and discrete conformal geometry of polyhedral surfaces. The goal is to discretize conformal maps and Riemann surfaces and to produce efficient algorithms to compute them.  Furthermore, the discrete counterparts should have their own intrinsic geometric structures, i.e., structure-preserving discretizations. See \cite{Luo, BPS, GLSW18a, GLSW18b, LL26, bishop3} and others for more details.  There are two popular schemes to discretize Riemann surfaces based on the observation that a linear conformal transformation of the plane sends circles to circles and preserves the length-cross-ratio.   More precisely, the length-cross-ratio of a quadrilateral $\square v_1v_2v_3v_4$ with respect to the diagonal $v_1v_3$ is defined to be $\frac{|v_1-v_2||v_3-v_4|}{|v_2-v_3||v_4-v_1|}$, i.e. the absolute value of the cross-ratio $(v_1, v_3, v_2, v_4)$. See Figure \ref{ds19}.  If one focuses on discretizing the circle-preserving properties of the conformal maps, one obtains Thurston's theory of circle packing. The length-cross-ratio-preserving property leads to the convex-hull (or the vertex-scaling) discretization of Riemann surfaces. 

Here are two examples of discretization of conformal maps based on the convex-hull construction. The first example is a discretization of the Riemann mapping theorem.  Let $\Omega$ be a bounded simply connected Jordan domain in the plane and $V$ be a discrete subset in $\Omega$ such that the limit points of $V$ in $\R^2$ are equal to  $\partial \Omega$. We consider $V$ to be a discretization of $\Omega$. The convex-hull discrete conformal class of $V$, more precisely $(\Omega, V)$, is defined to be the isometry class of Thurston's dome $\partial C(V \cup \Omega^c)$. In \cite{LW24},  it is proved that there exists a discrete subset $W$ of $\D$ such that $\partial C(V \cup \Omega^c)$ is isometric to $\partial C(W \cup \D^c)$. The pair $(\D, W)$ is considered to be the canonical model of $(\Omega, V)$ in its discrete conformal class.  The isometry map $f: \partial C(V \cup \Omega^c) \to \partial C(W \cup \D^c)$, or simply $f: V \to W$,  is defined to be the \it discrete Riemann mapping\rm. See Figure \ref{drm3} for an illustration.

\begin{figure}[htbp]
  \centering
 \begin{minipage}[t]{0.48\textwidth}
    \centering
\includegraphics[width=\textwidth]{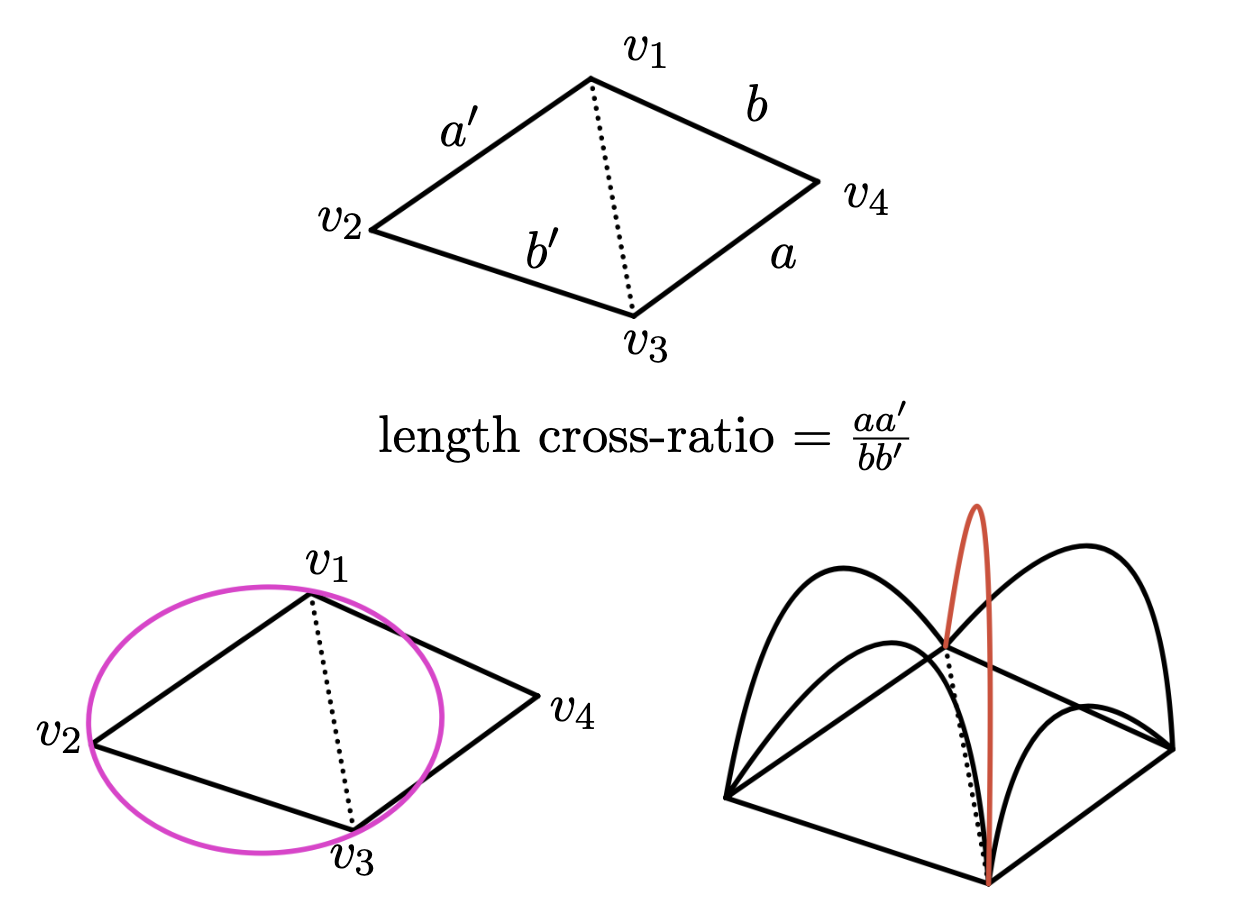}
\caption{Length cross-ratio, the empty-disk condition, and convex hulls.}
\label{ds19}

  \end{minipage}  
  \hspace{0.2cm}
  \begin{minipage}[t]{0.45\textwidth}
    \centering
    \includegraphics[width=0.95\textwidth]{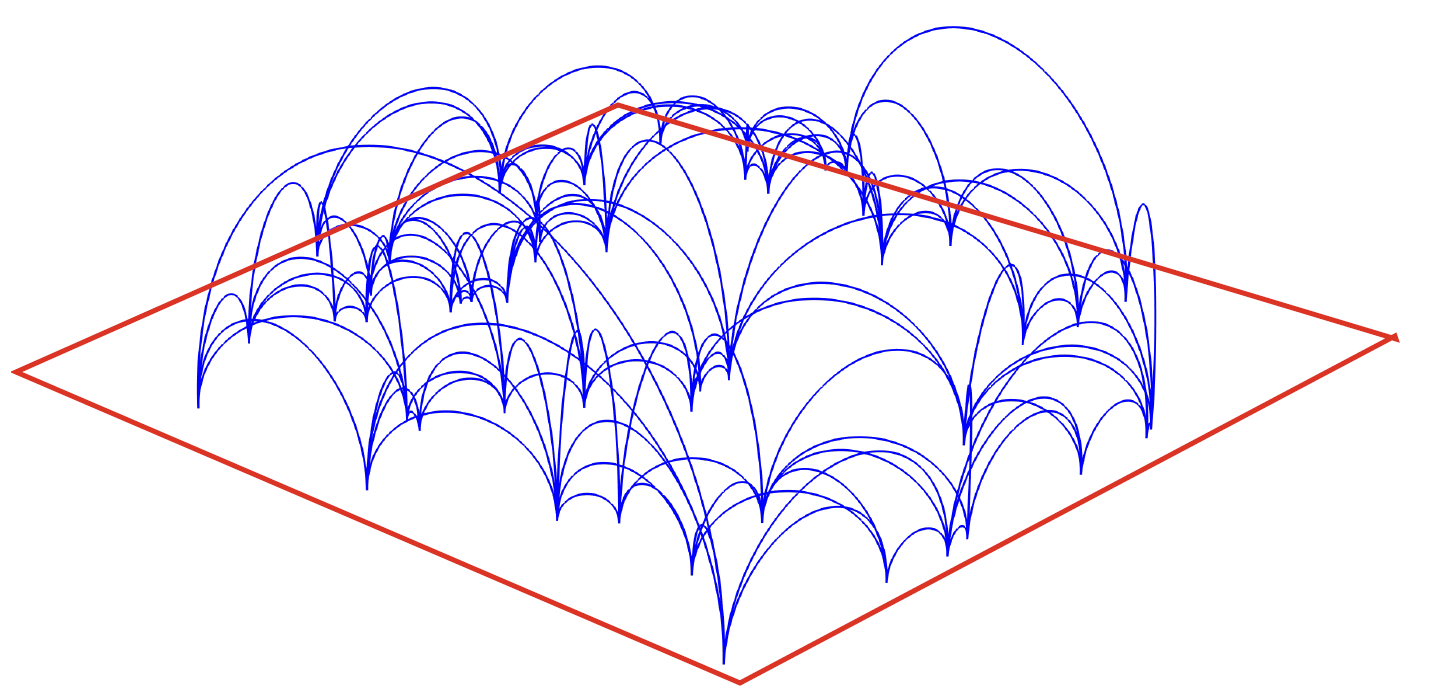}
     \caption{A Delaunay triangulation in $\C$ and the convex hull of its vertex set.}
    \label{dll7}
  \end{minipage}

\end{figure}

The second example is a discretization of conformal maps between complex planes, also known as Liouville-type theorems.
Recall that a geometric triangulation $\T$ of the plane is \it Delaunay \rm if the interior of the circumdisk of each triangle contains no vertices of $\T$. Delaunay triangulations play a central role in computational geometry and are closely related to the hyperbolic convex hulls. Indeed, if $V$ is the set of all vertices of a Delaunay triangulation $\T$ of $\C$,  codimension-1 faces in the hyperbolic convex hull $C(V \cup \{\infty\})$ in $\H^3_U$ are exactly the intersections of $C(V \cup \{\infty\})$ with the convex hulls of circumcircles of triangles in $\T$.  Furthermore, if $e=v_1v_3$ is an edge in $\T$ adjacent to two triangles $\triangle v_1v_2v_3$ and $\triangle v_1v_3v_4$, then the length-cross-ratio $\frac{|v_1-v_2||v_3-v_4|}{|v_2-v_3||v_4-v_1|}$ of $\T$ at $e$ is equal to the shear-coordinate of $\partial C(V \cup \{\infty\})$ at the corresponding hyperbolic geodesic ending at $v_1$ and $v_3$. It shows that the geometry of the hyperbolic surface $\partial C(V \cup \{\infty\})$ is determined by the length-cross-ratios at the edges of $\T$. See Figure \ref{dll7}.  As a consequence of Theorem \ref{main2}, we obtain the following discrete Liouville-type theorem

\begin{corollary} \label{cor16} Suppose $\T$ and $\T'$ are two Delaunay triangulations of the complex plane such that there exists a graph isomorphism between their 1-skeletons preserving the length-cross-ratios. Then $\T$ and $\T'$ differ by a complex linear isomorphism.  \end{corollary}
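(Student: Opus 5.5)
Let $V$ and $V'$ be the vertex sets of $\T$ and $\T'$, and let $\phi\colon V\to V'$ be the vertex bijection induced by the graph isomorphism. We will realize both triangulations as hyperbolic convex hulls in the upper half-space model $\H^3_U$, transfer the length-cross-ratio data to an isometry of the boundary surfaces, apply Theorem \ref{main2}, and read off the conclusion on $\partial\H^3_U=\C\cup\{\infty\}$.

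\emph{Step 1: the convex sets.} Set $X=C(V\cup\{\infty\})$ and $X'=C(V'\cup\{\infty\})$. Since $V$ is the vertex set of a triangulation of $\C$, it is a discrete subset of $\C$ with no finite accumulation point. Hence the limit set of $X$ at infinity is exactly $V\cup\{\infty\}$, a countable set. A countable set has vanishing 1-dimensional Hausdorff measure, and this remains true after passing to the Poincaré model by a Möbius map. The set $X$ is closed, convex, non-compact and 3-dimensional, because $V$ is not contained in a line. The same holds for $X'$, so both sets satisfy the hypotheses of Theorem \ref{main2}.

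\emph{Step 2: an isometry of the boundaries.} By the facts recalled before the corollary, the codimension-1 faces of $X$ are the ideal triangles spanned by the vertices of the triangles of $\T$. When several Delaunay triangles share a circumcircle, these are unions of such triangles, and we subdivide them along the edges of $\T$. Thus $\partial X$ is a complete hyperbolic surface glued from ideal triangles indexed by the faces of $\T$. The shear along the geodesic over an edge $e$ is $\log$ of the length-cross-ratio of $\T$ at $e$. The same description holds for $\partial X'$, and $\phi$ preserves both the combinatorics and the shears. Gluing ideal triangles by the given shears determines the hyperbolic surface up to a face-preserving isometry, which we build triangle by triangle using the isometries of ideal triangles fixing vertex correspondences. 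This gives an isometry $F\colon\partial X\to\partial X'$ that extends $\phi$ on the ideal vertices.

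\emph{Step 3: conclusion.} Theorem \ref{main2} extends $F$ to an isometry $A$ of $\H^3$, which acts on the sphere at infinity as a Möbius transformation. The map $A$ carries the ideal points of $\partial X$ to those of $\partial X'$ compatibly with $\phi$. The point $\infty$ is the unique limit point of $X$ whose horoball neighborhoods meet infinitely many faces, and likewise for $X'$, so $A(\infty)=\infty$. A Möbius map fixing $\infty$ has the form $z\mapsto az+b$ or $z\mapsto a\bar z+b$. In the first case it is a complex linear isomorphism with $A|_V=\phi$, and it carries the triangles of $\T$ to those of $\T'$, so the two triangulations differ by this map. In the second case, orientation reversal is excluded because $\phi$ preserves the cyclic order of triangles around each vertex. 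If the graph isomorphism reverses orientation, the conclusion holds up to composing with complex conjugation.

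\emph{Main obstacle.} The key step is Step 2. There one must check that the combinatorial gluing with matching shears really produces an isometry of the full intrinsic path metrics. The point to verify is that completeness and the behavior at the cusps, including the cusp at $\infty$ with infinitely many incident triangles, do not obstruct extending the triangle-wise maps continuously.
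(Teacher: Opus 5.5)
Your route is the one the paper intends; it gives no separate proof beyond the remarks preceding the corollary. You pass to the domes $\partial C(V\cup\{\infty\})$ and $\partial C(V'\cup\{\infty\})$, use that length-cross-ratios are the (exponentiated) shear coordinates to get a boundary isometry, apply Theorem \ref{main2} to the countable limit sets, and read off a M\"obius map fixing $\infty$. The outline is sound, but two of your justifications are wrong or unnecessary.

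\textbf{The characterization of $\infty$ via horoballs is false.} No face of $X$ has $\infty$ as a vertex, so $\infty$ is not a cusp of $\partial X$. A horoball $\{t>c\}$ at $\infty$ meets only faces whose circumradius exceeds $c$. For the regular triangular lattice it misses $\partial X$ altogether once $c$ is at least the common circumradius. The correct argument takes one line. $A$ is a homeomorphism of $\H^3\cup\partial\H^3$ carrying the closure of $\partial X$, whose ideal points are $V\cup\{\infty\}$, onto the closure of $\partial X'$, whose ideal points are $V'\cup\{\infty\}$. Since $A|_V=\phi$ is already onto $V'$, injectivity forces $A(\infty)=\infty$.

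\textbf{Your ``main obstacle'' is not an obstacle.} There is no ``cusp at $\infty$ with infinitely many incident triangles.'' The geodesic between $v_i,v_j\in V$ lies in the vertical plane over $[v_i,v_j]$. So the ideal triangle on $v_1,v_2,v_3$ projects vertically onto the Euclidean triangle $v_1v_2v_3$. This has three consequences:
\begin{enumerate}[label=(\alph*)]
\item the ideal triangulation of $\partial X$ is locally finite;
\item every point of $\partial X$ lies in an open triangle or on exactly one edge;
\item every cusp, i.e.\ every point of $V$, has finitely many incident triangles.
\end{enumerate}
Each label-respecting ideal-triangle isometry sends the foot of the perpendicular from the opposite vertex to the corresponding foot. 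So equal shears say precisely that the two triangle isometries adjacent to an edge agree on that edge. They therefore assemble into a homeomorphism $F\colon\partial X\to\partial X'$ that is a local isometry, and Lemma \ref{local+homeo=global} makes it an isometry. Completeness never enters.

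Your orientation remark is right in substance. If $\phi$ preserves cyclic orders around vertices, $A$ cannot be $z\mapsto a\bar z+b$. Otherwise one only obtains an anti-affine map.
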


This result generalizes the previous work of \cite{WGS, DGM, DW24, Luo-sun-wu} where extra conditions such as angle bounds or special combinatorics are imposed for $\T$ and $\T'$.

Corollary \ref{cor16} should be compared with Schramm's rigidity theorem of infinite circle packings which is the circle packing version of Liouville's theorem.  It plays the key role in establishing the circle packing uniformization theorem (see \cite{stephenson, He-schramm, He1999}).  Recall that an infinite circle packing on the plane is an infinite collection of round disks with disjoint interiors such that each compact set intersects only a finite number of disks. The \it nerve \rm of the circle packing is the graph whose vertices correspond to disks and whose edges correspond to pairs of tangent disks. See Figure \ref{dsl6} and \ref{ds17}. Schramm's rigidity theorem states,

\begin{theorem}[Schramm, \cite{schramm}] \label{scharmm} If $P$ and $P'$ are two infinite circle packings on the plane whose nerves are isomorphic to the 1-skeleton of a triangulation of the plane, then $P$ and $P'$ differ by a complex linear transformation.   
\end{theorem}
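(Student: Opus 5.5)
The plan is to deduce this from Theorem \ref{main2}, by turning each circle packing into a closed convex set in $\H^3_U$ whose limit set at infinity is countable, hence of zero length.

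\textbf{Step 1: the convex set.} Let $P=\{D_v\}$ be a packing whose nerve is the 1-skeleton of a triangulation $\T$ of $\C$. Each triangle $v_1v_2v_3$ of $\T$ bounds a triangular interstice. Its three tangency points lie on a unique \emph{dual circle}, and this circle bounds a disk $D^*_{f}$ that is orthogonal to $D_{v_1},D_{v_2},D_{v_3}$. Since $\T$ triangulates the plane, the closed disks $\overline{D_v}$ and $\overline{D^*_f}$ together cover $\C$. Define
\[
X(P)=\H^3_U - \Bigl(\bigcup_v H_v\cup \bigcup_f H^*_f\Bigr),
\]
where $H_v$ and $H^*_f$ are the open half-spaces bounded by the hemispheres over $D_v$ and $D^*_f$. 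Then $X(P)$ is closed, convex and 3-dimensional. Its closure meets $\partial\H^3_U$ only in the set of tangency points together with $\infty$. This set is countable, so it has vanishing 1-dimensional Hausdorff measure, after passing to the Poincaré model.

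\textbf{Step 2: the boundary.} The face of $\partial X(P)$ on the plane over $D_v$ is the ideal polygon whose vertices are the tangency points on $\partial D_v$. Its sides are the geodesics cut out by the orthogonal dual hemispheres. The face over $D^*_f$ is the ideal triangle on the three tangency points of $f$. Thus $\partial X(P)$ is an ideal polyhedral surface whose cell structure is read off from $\T$: faces correspond to vertices and triangles of $\T$, and ideal vertices correspond to edges of $\T$. Its intrinsic metric is determined by the shear coordinates along its edges.

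\textbf{Step 3 (the main obstacle): the metric depends only on the nerve.} I must show that every such shear is the same fixed number, which I expect to be $0$, independent of the geometry of the packing. Then a nerve isomorphism $P\to P'$ induces an isometry $\partial X(P)\to\partial X(P')$.

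My first attempt is a normalization. Send a tangency point $t$ of $D_v$ and $D_w$ to $\infty$. Then $D_v$ and $D_w$ become two parallel half-planes, and the two dual circles through $t$ become lines perpendicular to both boundary lines. Every face incident to $t$ is then vertical, so the shears of the edges ending at $t$ can be computed from horocyclic lengths near $t$. I expect the reflection symmetry of this picture, namely reflection in the line midway between the two parallel boundary lines, to force these shears to vanish.

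If the shears do not all vanish, I would use the following fallback. Subdivide each polygon face into ideal triangles coned from its "center", which I must first make canonical, and compare the resulting shears directly. If that also fails, I would abandon this route and use Schramm's original argument, or the He--Schramm maximum principle combined with the Ring Lemma.

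\textbf{Step 4: conclusion.} Apply Theorem \ref{main2}. The isometry $\partial X(P)\to\partial X(P')$ extends to an isometry $\phi$ of $\H^3$. Because $\phi$ preserves faces, it carries hemispheres over the $D_v$ to hemispheres over the corresponding $D'_v$, so its boundary Möbius map sends $P$ onto $P'$. This Möbius map fixes $\infty$, since $\infty$ is the unique accumulation point of tangency points that corresponds to no edge of $\T$. It is also orientation preserving after possibly composing with a reflection, which the combinatorics of $\T$ rule out. Hence it is a complex affine map, that is, a complex linear transformation in the sense of the statement.
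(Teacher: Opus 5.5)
The paper does not prove this statement; it cites Schramm's theorem only for comparison with Corollary \ref{cor16}. Your derivation from Theorem \ref{main2} has a genuine gap at Step 3. The claim that the intrinsic metric of $\partial X(P)$ depends only on the nerve cannot come from a local computation. Moreover, given Theorem \ref{main2}, it is as strong as the theorem you are trying to prove.

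Step 2 already overstates what shears give you. The face $F_v$ over $D_v$ is an ideal $\deg(v)$-gon, which has $\deg(v)-3$ moduli: the cross-ratios of the tangency points on $\partial D_v$. These are governed by the ratios $r_u/r_v$ of neighboring radii, and those vary with the packing. Triangulating $F_v$ does not help; it only moves these moduli into the shears of the new diagonals.

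Your own normalization exhibits a concrete invariant that varies. Take $t=\infty$, $D_v=\{\mathrm{Im}\,z>1\}$ and $D_w=\{\mathrm{Im}\,z<-1\}$. The common neighbors $u_1,u_2$ are then unit disks centered at real points $a_1<a_2$. Near $\infty$, $X(P)$ is the chimney over the rectangle $[a_1,a_2]\times[-1,1]$.
\begin{itemize}
\item The two ideal-triangle faces at $t$ both have their canonical horocycle at height $2$. That is all the reflection in $\mathrm{Im}\,z=0$ gives you.
\item At that height the horocycle crosses $F_v$ and $F_w$ in arcs of length $(a_2-a_1)/2$ each.
\item So the closed horocycle at the cusp $t$ through the canonical points has length $2+(a_2-a_1)$. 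Any face-preserving isometry must preserve this length.
\item On the other hand, $a_2-a_1=\sum_j 2\sqrt{\rho_j\rho_{j+1}}$ is computed from the radii $\rho_j$ of the chain of neighbors of $v$ from $u_1$ to $u_2$ tangent to the line $\mathrm{Im}\,z=1$.
\end{itemize}
Already for $\deg(v)=4$ this gives $a_2-a_1=4\sqrt{\rho_1}$, where $\rho_1$ is the normalized radius of the fourth neighbor of $v$. Nothing in the stars of $v$ and $w$ determines $\rho_1$; only global rigidity can fix it.

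The structural reason is that the nerve gives you the combinatorics of $X(P)$ and its dihedral angles, not its induced metric. The dihedral angles are all $\pi/2$, because the dual circles are orthogonal to the packing circles. Rigidity of circle packings is therefore an Andreev/Rivin-type problem (rigidity by dihedral angles for infinite ideal polyhedra). Theorem \ref{main2} is an Alexandrov/Pogorelov-type theorem (rigidity by induced metric).

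This is why the paper can deduce Corollary \ref{cor16} from Theorem \ref{main2} but does not claim Schramm's theorem. For a Delaunay triangulation, the dome $\partial C(V\cup\{\infty\})$ has ideal triangular faces, and the assumed length-cross-ratios are exactly its shear coordinates. So that hypothesis directly supplies the boundary metric.

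Your Steps 1 and 4 are essentially fine as far as they go. However, your fallback in Step 3 (Schramm's own argument, or the He--Schramm maximum principle with the Ring Lemma) would be the actual proof, not a repair of this route.
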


\begin{figure}[htbp]
  \centering
  \begin{minipage}[t]{0.495\textwidth}
    \centering
\includegraphics[width=0.8\textwidth]{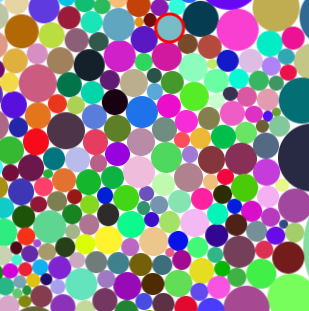}
    \caption{A circle packing on $\C$.} 
    \label{dsl6}
  \end{minipage} 
 \begin{minipage}[t]{0.495
 \textwidth}
    \centering
\includegraphics[width=0.85\textwidth]{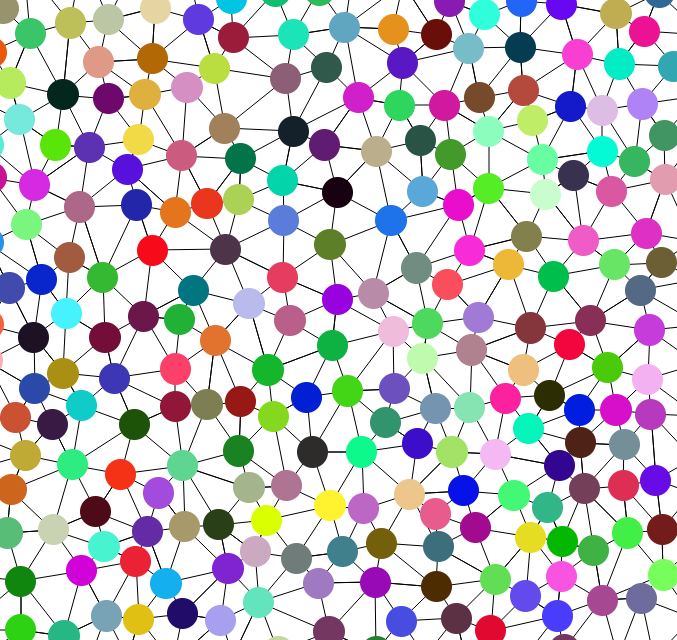}
     \caption{The nerve of a circle packing.} 
    \label{ds17}
  \end{minipage}
\end{figure}

In the simplest case, Schramm's theorem implies the Rodin--Sullivan theorem~\cite{rodin}, first conjectured by W. Thurston~\cite{Thurston1985},
that any hexagonal circle packing in the plane is regular. Corollary \ref{cor16} shows that any hexagonal Delaunay triangulation (i.e., the degree of each vertex is six) of the plane such that the length-cross-ratio of each edge is one is regular.

One of the main problems in the discrete conformal geometry is the discrete uniformization of all polyhedral surfaces, whether they are compact or not. As in the classical case, using the universal cover, the discrete uniformization problem in the vertex scaling setting can be reduced to the simply connected case.  The closed simply connected case corresponds to Corollary \ref{cor3} for finite sets $X$ and $Y$, first solved by I. Rivin~\cite{Rivin}.
For the remaining non-compact case, in the same manner as the classical approach to the uniformization theorem, one needs to show that any non-compact simply connected polyhedral surface is discrete conformal to a unique standard model which is either $(\C, V)$ or $(\D, V)$ where $V$ is a discrete subset of $\C$ or $\D$. Furthermore, in the case of the unit disk $\D$, one requires that $\partial \D$ is the set of all limit points of $V$.

The existence part of the discrete uniformization problem of non-compact simply connected polyhedral surfaces was established in \cite{LW24}.  The uniqueness part translates into the rigidity problem for non-compact convex sets in $\H^3$. In the classical case, the uniqueness follows from the Schwarz lemma and Liouville's theorem.  Analogously,  their counterparts in discrete conformal geometry correspond to the following two problems.  Namely, the discrete Liouville's theorem states that if $V_1$ and $V_2$ are two discrete subsets in the complex plane, considered to lie on the boundary at infinity of $\H^3_U$,  such that $\partial C(V_1\cup \{\infty\})$ is isometric to $\partial C(V_2\cup \{\infty\})$, then $V_1$ and $V_2$ differ by a M\"obius transformation. The discrete Schwarz lemma states that if $V_1$ and $V_2$ are two discrete subsets of $\D$ whose limit points are $\partial \D$ and $\partial C(V_1 \cup \D^c)$ is isometric to $\partial C(V_2 \cup \D^c)$, then $V_1$ and $V_2$ differ by a M\"obius transformation. Corollary  \ref{cor3} implies that the discrete Liouville theorem holds.  As a consequence, combining the work~\cite{LW24}, we obtain the following result concerning polyhedral surfaces.  Recall that every polyhedral surface has a natural complex structure.  A polyhedral surface is of \it parabolic type \rm if its universal cover is biholomorphic to $\C$.  

\begin{corollary} \label{co14} Suppose $(P, V, d)$ is a connected polyhedral surface with vertex set $V$ and cone metric $d$ whose underlying complex structure is of parabolic type. Then $(P, V,  d)$ is discrete conformal to a unique surface $(P, V, d')$ where  $d'$ is a complete flat metric on $P$.    
\end{corollary}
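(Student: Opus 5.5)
We pass to the universal cover and reduce the uniqueness to the discrete Liouville theorem, which follows from Corollary \ref{cor3} (equivalently Theorem \ref{main2}). Existence is taken from \cite{LW24}.

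\textbf{Step 1: existence.} Let $\pi:\tilde P\to P$ be the universal cover, with the pulled-back vertex set $\tilde V$ and cone metric $\tilde d$. Since the complex structure is parabolic, $\tilde P$ is biholomorphic to $\C$. The existence result of \cite{LW24} gives a discrete set $W\subset\C$ such that $(\tilde P,\tilde V,\tilde d)$ is discrete conformal to $(\C,W,|dz|)$. Concretely, there is an isometry between the dome surface associated with $(\tilde P,\tilde V,\tilde d)$ and Thurston's dome $\partial C(W\cup\{\infty\})$ in $\H^3_U$; we identify $W$ with $\tilde V$ through it. The deck group $\Gamma\cong\pi_1(P)$ acts by discrete-conformal automorphisms of $(\tilde P,\tilde V,\tilde d)$. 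Hence each $\gamma\in\Gamma$ induces a self-isometry of $\partial C(W\cup\{\infty\})$.

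\textbf{Step 2: the deck group acts by Euclidean similarities.} The set $W\cup\{\infty\}$ is closed and countable, so its $1$-dimensional Hausdorff measure is zero. Theorem \ref{main2} therefore applies: every self-isometry of $\partial C(W\cup\{\infty\})$ extends to an isometry of $\H^3$. This isometry is a M\"obius map $g_\gamma$ preserving $W\cup\{\infty\}$. Since $W$ is discrete in $\C$, $\infty$ is its only accumulation point, so $g_\gamma$ fixes $\infty$ and is a complex affine map $z\mapsto a_\gamma z+b_\gamma$.

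We need $|a_\gamma|=1$. Suppose instead $|a_\gamma|\neq1$. Then $g_\gamma$ has a finite fixed point $p$, and after replacing $\gamma$ by $\gamma^{-1}$ it contracts toward $p$. The iterates $g_\gamma^n$ are then a family of deck transformations, acting freely and properly discontinuously on $\tilde V\cong W$, yet they push points of $W$ toward $p$. This forces $W$ to accumulate at $p$, contradicting discreteness. (If $p\in W$, the same contraction argument applies to other points of $W$.) So every $g_\gamma$ is a Euclidean isometry.

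The group $\{g_\gamma\}$ acts freely on $\C$, because the isometry $\tilde P\to\C$ is equivariant and $\Gamma$ acts freely on $\tilde P$. Freeness rules out nontrivial rotations, so it acts by translations and glide reflections. Descending the flat metric gives a complete flat metric $d'$ on $P$ with $(P,V,d')$ discrete conformal to $(P,V,d)$.

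\textbf{Step 3: uniqueness.} Suppose $d''$ is another complete flat metric discrete conformal to $d$. Lifting both to $\tilde P$ gives two developing images $(\C,W)$ and $(\C,W')$ whose domes are isometric, by an isometry respecting the marking. By Corollary \ref{cor3} (the discrete Liouville theorem), $W$ and $W'$ differ by a M\"obius map fixing $\infty$, that is, an affine map, and this map is equivariant for the deck group. Hence $d''$ is a constant scaling of $d'$. This gives uniqueness in the sense stated: unique up to scaling, as in the abstract.

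\textbf{Main obstacle.} The delicate step is Step 1's transfer: setting up the correspondence between discrete conformality of polyhedral surfaces and isometry of the associated domes in \cite{LW24}, and checking that it is natural under deck transformations. Equivariance is what allows Theorem \ref{main2} to be applied to the deck action.

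A secondary difficulty is excluding dilations in Step 2. If the discreteness argument is not clean, I would instead use that the vertex sets are quasi-uniformly spread for a lifted compact-type or finite-area surface. Failing that, I would compare the induced action on the horosphere at $\infty$, where it acts by isometries of the dome's horospherical cusp structure.
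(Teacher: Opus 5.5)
Your proposal is correct and follows the route the paper indicates; the paper defers the details to \cite{LL26}, but the route is existence on the universal cover from \cite{LW24}, then Theorem \ref{main2} applied to the countable limit set $W\cup\{\infty\}$, used both to make the deck group act by Euclidean isometries and to get uniqueness up to scaling. Two small points: in Step 3 you need the ``every isometry extends'' form of Theorem \ref{main2} rather than Corollary \ref{cor3}, because deck-group equivariance requires the M\"obius map to be the extension of the given equivariant dome isometry; and in Step 2, freeness of the action on $\C$ alone already forces each $g_\gamma$ to be a translation or glide reflection (orientation-reversing $g_\gamma$ must be allowed when $P$ is non-orientable), so your contraction argument is correct but optional.
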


As the argument requires an extended discussion of polyhedral surfaces and discrete conformality, we refer the reader to \cite{LL26} for a proof that Corollary \ref{co14} follows from Theorem \ref{main2}, and for a comprehensive account of discrete conformal geometry.

\bigskip
\noindent{\bf Main ideas of the proof.}
The proof of Theorem \ref{main2} splits into three parts, based on the dimensions of the $X_i$. 

When both $X_1$ and $X_2$ are 3-dimensional, 
we use the Pogorelov map which sends a pair of locally convex isometric surfaces in $\H^3$ to a pair of locally convex isometric surfaces in $\R^3$. In our case, since the conformal ends of the convex surfaces in $\H^3$ are small due to the zero-length condition, we are able to compactify the two locally convex surfaces in $\R^3$ to obtain two compact convex surfaces without boundary and extend the local isometry between them to an isometry. Once this major step is achieved, Pogorelov's Theorem  \ref{pogo1} implies Theorem \ref{main2}.  The most technical part of the proof depends on the basic properties of the Pogorelov map and the Tabor--Tabor Theorem \ref{thm:Tabor} which states that if $A$ is a closed subset of an open set $U \subset \R^n$ with vanishing $(n-1)$-dimensional Hausdorff measure, then any locally convex function $f: U -A \to \R$ extends to a locally convex function on $U$.

When both $X_1$ and $X_2$ are 2-dimensional, we first demonstrate that the isometry between the Alexandrov doubles $D(X_i)$ must map each copy of $X_1$ to a corresponding copy of $X_2$. We establish this by analyzing the image of the boundary of $X_1$ in $D(X_2)$. Once we know $X_1$ is isometric to  $X_2$, the result follows.

Finally, we show that the case in which exactly one of the $X_i$ is 3-dimensional cannot happen. Suppose otherwise that $X_1$ is 3-dimensional and $X_2$ is two-dimensional. We prove that there exists a supporting plane of $X_1$ which intersects $X_1$ at a single point $p$. By generalizing a theorem of Busemann~\cite[Theorem 11.9]{Busemann}, we show that $p$ possesses a compact \emph{convex-cap} neighborhood in $\partial X_1$.
Doubling this convex cap yields a closed convex surface isometric to the Alexandrov double of a 2-dimensional convex set in $\H^2$. This, however, is precluded by Pogorelov's rigidity theorem (Theorem \ref{pogo1}).

\bigskip
\noindent{\bf An open problem.}\label{sec:open}
A natural question arising from this investigation corresponds to the Painlev\'e  problem in the convex surface setting. The classical Painlev\'e problem~\cite{Younsi} asks which compact sets in the complex plane are removable for bounded holomorphic functions. Using the work of Ahlfors on analytic capacity and the work of Vitushkin, Xavier Tolsa completely solved the Painlev\'e problem at the turn of the century (see \cite{Tolsa}).  The natural corresponding problem is the following.

\begin{problem} \label{P1} Find the necessary and sufficient condition on a compact set $X \subset \mathbb{S}^2$ such that if $Y\subset \mathbb{S}^2$ is a compact set with $\partial C(Y)$ isometric to $\partial C(X)$ in $\H^3$, then $X$ and $Y$ differ by a M\"obius transformation.
 
\end{problem}

Similar to the Painlev\'e problem, Thurston’s work implies that a necessary condition for $X$ is  that it be totally disconnected, 
i.e., each connected component must be a single point. A well-known counterexample to the rigidity part of Koebe's circle domain conjecture shows that there exist two Cantor sets $X$ and $Y$ in $\C$ whose complements are conformal and $X$ and $Y$ are not related by any M\"obius transformation. In this case, both $X$ and $Y$ have positive 2-dimensional Lebesgue measure and are constructed using the measurable Riemann mapping theorem.
We believe there are two Cantor sets $X$ and $Y$ in $\mathbb{S}^2$ such that $\partial C(X)$ is isometric to $\partial C(Y)$ and $X$ and $Y$ are not related by M\"obius transformations.  

The proof of Theorem \ref{main2} shows that Problem \ref{P1} is related to the removable sets for locally convex functions defined on open sets in $\R^n$. See the works of Tabor--Tabor~\cite{TT10} and Pokorn\'y--Rmoutil~\cite{PR-removable} for more details.

\bigskip
\noindent{\bf Organization.} 
The paper is organized as follows.  In Section \ref{sec:preliminary}, we recall the basics about path metrics, convex sets,  convex surfaces, the Minkowski space and hyperbolic spaces.  
In Section \ref{sec:Pogorelov}, we recall the basics about the Pogorelov maps.  Section \ref{sec:extension} proves several lemmas on path metric surfaces and recalls the Tabor--Tabor theorem.  Theorem \ref{main2} is proved in Section \ref{sec:main-proof}.  

\bigskip
\noindent{\bf Acknowledgments.} We thank Xiaochun Rong and Guofang Wei for discussions.
Figures \ref{dsl6} and  \ref{ds17} were produced using Circle Packing V.10 developed by B. Beeker and B. Loustau. The work is supported in part by NSF DMS 2220271, NSF DMS 2501286,  and the Simons Foundation grant SFI-MPS-SFM-00011051.

\smallskip

\section{Preliminaries}\label{sec:preliminary}
We will quickly review some basic concepts and results to be used in the paper. These include path metrics, local isometries, locally convex functions, locally convex sets, locally convex surfaces, and complete convex surfaces in the Euclidean and hyperbolic 3-spaces. 

The length $l(\gamma)$ of a path $\gamma: [a, b] \to X$ in a metric space $(X, d)$ is defined to be $$\sup \{ \sum_{ i=0}^n d(\gamma(t_i),\gamma(t_{i+1})):  a=t_0<t_1<...< t_{n+1}=b\}.$$  Here, the supremum is taken over all partitions of the interval $[a,b]$. A path of finite length is called \emph{rectifiable}. A \emph{path metric space}  $(X,d)$ is a metric space such that for all $x, y \in X$, 
$$d(x,y) = \inf\{ l(\gamma): \gamma \text{ is a path from $x$ to $y$}\}.$$  A distance-minimizing path $\gamma$ in a path metric space is a rectifiable path whose length is equal to the distance between its endpoints. A locally distance-minimizing path is called a \emph{geodesic}.  See \cite{BBI} for more details.

If $p \in X$ is a point in a metric space $(X, d)$, the open ball $\{x \in X : d(x, p)<r\}$ of radius $r$ centered at $p$ is denoted by $B_p(r)$.  

The following simple lemma will be used later. Let $(X, d_X)$ and $(Y, d_Y)$ be two path metric spaces.  Recall a \it local isometry \rm  $g: X\to Y$ is a  map with the property that for any $x\in X$, there exists a neighborhood $U$ of $x$ such that $g|_U: U \to g(U)$ is an isometry, i.e. $d_X(u_1, u_2) = d_Y(g(u_1), g(u_2))$. 

\begin{lemma}
\label{local+homeo=global}
    Let $g: X \to Y$ be a local isometry between path metric spaces $(X, d_X)$ and $(Y, d_Y)$. If $g$ is a homeomorphism, then $g$ is an isometry. 
\end{lemma}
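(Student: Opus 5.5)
The plan is to show that $g$ preserves lengths of paths, and then use the path-metric property on both sides to obtain $d_Y(g(x),g(x')) = d_X(x,x')$.

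\emph{Step 1: length preservation.} Let $\gamma:[a,b]\to X$ be a path. For each $t\in[a,b]$, pick a neighborhood $U_t$ of $\gamma(t)$ on which $g$ is an isometry onto its image. By compactness of $[a,b]$ and continuity of $\gamma$, there is a Lebesgue number $\delta>0$ such that any subinterval of length less than $\delta$ is mapped by $\gamma$ into a single $U_t$. Refining partitions, we may take every partition of $[a,b]$ to be finer than $\delta$. Then $d_X(\gamma(t_i),\gamma(t_{i+1})) = d_Y(g\gamma(t_i), g\gamma(t_{i+1}))$ for each $i$. Since refinement only increases the partition sums, the suprema defining $l(\gamma)$ and $l(g\circ\gamma)$ may both be taken over such fine partitions, and hence $l(g\circ\gamma)=l(\gamma)$.

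\emph{Step 2: transfer of paths.} Since $g$ is a homeomorphism, paths from $x$ to $x'$ in $X$ correspond bijectively to paths from $g(x)$ to $g(x')$ in $Y$, via $\gamma\mapsto g\circ\gamma$ with inverse $\sigma\mapsto g^{-1}\circ\sigma$. Continuity of $g^{-1}$ is exactly what makes the inverse correspondence land in continuous paths.

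\emph{Step 3: conclusion.} By Steps 1 and 2, the sets of lengths over which the infima are taken coincide. Since both spaces are path metric spaces, it follows that $d_X(x,x') = d_Y(g(x),g(x'))$.

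The only real subtlety is Step 1, namely handling the supremum over arbitrary partitions. This is resolved by the triangle inequality: refining a partition does not decrease the sum, so restricting to $\delta$-fine partitions gives the same supremum.
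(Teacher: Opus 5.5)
Your proof is correct and follows essentially the same route as the paper: a Lebesgue number argument puts each small piece of $\gamma$ inside a neighborhood where $g$ preserves distances, which gives $l(g\circ\gamma)=l(\gamma)$, and the homeomorphism property then identifies the two infima over paths. The differences are cosmetic. You compare partition sums directly over $\delta$-fine partitions, using that refinement does not decrease sums, whereas the paper uses additivity of length over the subintervals $[t_i,t_{i+1}]$; and you spell out the bijection $\gamma\mapsto g\circ\gamma$ of paths, which the paper handles by noting that $g^{-1}$ is also a local isometry.
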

\begin{proof} Since $g$ is a homeomorphism, $g$ sends an open set $U$ to an open set $g(U)$. In particular, it implies that $g^{-1}$ is a local isometry.
 To show that $g$ is an isometry, it suffices to prove that for any rectifiable path $\gamma$ in $X$, $\gamma$ and $g(\gamma)$ have the same length.  This follows from the additivity of the lengths of paths. Namely, if $\gamma: [a, b] \to X$ is a path and $t_0=a<t_1<...<t_n=b$ is a partition of the interval, then the length $l(\gamma)$ is equal to the sum of the lengths of paths $\gamma|_{[t_i, t_{i+1}]}$ for $i=0, 1, ..., n-1$. Using Lebesgue's number lemma for the compact set $\gamma([a,b])$ covered by those open sets $U$ on which the restrictions $g|_U$ are isometries, we find a partition $a=t_0< t_1 <...<t_n=b$  such that for each $i$, $\gamma([t_i, t_{i+1}])$ is contained in an open set $U$ on which $g|_U$ is an isometry.  Then, by the definition of a local isometry, we see that $g\circ \gamma|_{[t_i, t_{i+1}]}$ is rectifiable of length $l(\gamma|_{[t_i, t_{i+1}]})$. Therefore, by additivity, we obtain
\begin{equation*}
    l(\gamma) =\sum_{i=0}^{n-1} l(\gamma|_{[t_i, t_{i+1}]}) = \sum_{i=0}^{n-1} l(g\circ \gamma|_{[t_i, t_{i+1}]})  =l(g(\gamma)).\qedhere
\end{equation*}  
\end{proof}

We use $\H^n$, $\R^n$, and $\mathbb{S}^n$ to denote the $n$-dimensional hyperbolic space, the Euclidean space, and the n-dimensional sphere, respectively.  Convex sets in $\H^n$ and $\R^n$ are defined in the standard way.  

By a geodesic segment in $\H^n$ we mean an interval in a geodesic in $\H^n$. The interval could be open, closed, or half-open. If $p, q$ are two distinct points in $\H^n \cup \partial \H^n$, we use $[p,q]$ to denote the geodesic in $\H^n$ joining $p$ to $q$.

If $\Sigma$ is a surface with boundary, we use $\mathring{\Sigma}$ to denote its interior. 
A surface is \it closed \rm if it is compact and has no boundary.
If $X$ is a closed convex three-dimensional set in $\H^3$ or $\R^3$, its boundary $\partial X$ is called a complete convex surface.
We call $X$ the \it convex side \rm of the surface $\partial X$.  The surface $\partial X$ is a complete path metric space under its intrinsic metric. Here, completeness means every Cauchy sequence has a limit point. A theorem of Alexandrov~\cite{alex} shows that for any two points $x,y$ on a complete convex surface, there exists a distance-minimizing path joining the two points.  

A subset $X$ in $\H^3$ or $\R^3$ is  \it locally convex \rm if for each $p \in X$, there exists a ball $B_r(p)$ in  $\H^3$ or $\R^3$  such that $B_r(p) \cap X$ is convex.  A surface $S \subset  \H^3$ or $\R^3$ is \it locally convex \rm if for each point $p \in S$, there exists a 3-dimensional convex set $X_p$ such that $\partial X_p \cap S$ contains a neighborhood of $p$ in $S$.  In particular, a surface contained in the boundary of a 3-dimensional convex set in $\H^3$ or $\R^3$ is locally convex. 

\smallskip

\section{Basic Properties of the Pogorelov map}\label{sec:Pogorelov}

In this section, we review the key tool of this paper, the Pogorelov map, which was originally defined in the spherical geometry $\mathbb S^n$ by Pogorelov~\cite{Pogorelov}. Pogorelov mentioned that the same map is defined on the hyperbolic space $\H^n$, but he did not carry out the detailed computation in order ``To keep the exposition as simple and clear as possible" (line 3, page 271, in \cite{Pogorelov}). Since the Pogorelov map on $\H^n$ is so essential to our proof and the relevant results are hard to find in the literature, we will collect all of its basic properties that will be used in this section for completeness.  Some of the proofs of these properties may be new.
For references, see \cite{Pogorelov, Spivak, Schlenker00, Schlenker06, Izmestiev, Virto, MS12, Izmestiev2}.

\subsection{The Pogorelov map on Minkowski spaces}

 Let $(\mathbb{R}^{1, n}, \lan \cdot, \cdot\ran  )$ be the Minkowski space with the Minkowski product for ${x} = (x_0, x_1, \dots, x_n) $ and $y = (y_0, y_1, \dots, y_n)$ given by
\begin{equation}
    \lan  x,  y\ran   = -x_0y_0 + \sum_{i=  1}^nx_iy_i = -x_0y_0 + \PP(x) \cdot \PP(y),
\end{equation}
where $\PP(x_0, x_1, ..., x_n)=(x_1, ..., x_n)$ and $u \cdot v$ is the Euclidean dot product of two vectors $u, v \in \R^n.$  
A linear transformation of $\mathbb R^{1,n}$ preserving the Minkowski product is called a Lorentz transformation.

Let $e = (1, 0,\cdots, 0)\in \R^{1,n}$.  Then, $$(0,\PP(x)) =x +\lan x, e \ran e.$$
Since $\lan x, e \ran =-x_0$, 
we have, 
\begin{equation}\label{basicpdt}
\begin{aligned} 
\PP(x) \cdot \PP(y) = \lan x, y\ran   + x_0y_0 = \lan x, y \ran+ \lan x, e \ran  \lan y, e\ran. 
    \end{aligned}
\end{equation}

The hyperboloid model $\mathcal{H}^n$ of the $n$-dimensional hyperbolic space is  given by 
\begin{equation}
    \mathcal{H}^n = \{x\in\mathbb{R}^{1,n} : \lan x, x\ran   = -1, x_0>0  \} = \{x\in\mathbb{R}^{1,n} : x_0^2 = 1 + \sum_{i=1}^n x_i^2, x_0>0\}.
\end{equation}

\begin{definition}[\cite{Pogorelov}]
    The Pogorelov map $\Phi\from \mathbb{R}^{1, n} \times \mathbb{R}^{1,n}  -\{(x, y) \in \mathbb{R}^{1,n}\times \mathbb{R}^{1,n}: \langle x+y, e\rangle = 0\} \to \mathbb{R}^n\times \mathbb{R}^n$ is defined by
    \begin{equation}
        \Phi(x, y) = \left(\frac{2\PP(x)}{-\lan x+y,e\ran}, \frac{2\PP(y)}{-\lan x+y,e\ran}\right) .
    \end{equation}
In particular, the Pogorelov map $\Phi$ is defined on $\mathcal H ^n \times \mathcal {H} ^n$. Let $P_i$ be the components of the Pogorelov map $\Phi$, i.e., 
\begin{equation}\label{P_i}
    P_1(x, y) = \frac{2\PP(x)}{-\lan x+y,e\ran  }, \quad P_2(x, y) = \frac{2\PP(y)}{-\lan x+y,e\ran  }.
\end{equation}
\end{definition}

The following lemma illustrates the length-preserving property of the Pogorelov map $\Phi$.

\begin{lemma}\label{lem:equal-length}  Let $x, y, u, v\in \mathbb{R}^{1,n}$ such that $\lan x, x\ran   = \lan y, y\ran  $ and $\lan u, u\ran   = \lan v, v\ran  $. Suppose $\hat x = P_1(x, y)$, $\hat y = P_2(x, y)$, $\hat u = P_1(u, v)$, and $\hat v = P_2(u, v)$ are defined. Then 
    $\|\hat x - \hat u\| = \|\hat y - \hat v\|$ if and only if $\lan x,u\ran   =\lan y,v\ran  $, where $\|\cdot\|$ is the Euclidean norm.
\end{lemma}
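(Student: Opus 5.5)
Write $\lambda = -\lan x+y,e\ran$ and $\mu = -\lan u+v,e\ran$; both are nonzero by assumption. Then $\hat x=2\PP(x)/\lambda$, $\hat y = 2\PP(y)/\lambda$, $\hat u=2\PP(u)/\mu$ and $\hat v=2\PP(v)/\mu$. The plan is to expand the difference of squared norms and reduce everything to Minkowski products using \eqref{basicpdt}:
\[
\PP(a)\cdot\PP(b)=\lan a,b\ran+\lan a,e\ran\lan b,e\ran .
\]
Concretely, we compute
\[
\|\hat x-\hat u\|^2-\|\hat y-\hat v\|^2 .
\]
We expand both norms into the squared terms $\|\hat x\|^2-\|\hat y\|^2$ and $\|\hat u\|^2-\|\hat v\|^2$, plus the cross terms $-2(\hat x\cdot\hat u-\hat y\cdot\hat v)$.

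\textbf{Squared terms.} For the first pair,
\[
\|\PP(x)\|^2-\|\PP(y)\|^2 = \lan x,x\ran-\lan y,y\ran + x_0^2-y_0^2 = x_0^2-y_0^2 ,
\]
where the middle difference vanishes by hypothesis. Now factor $x_0^2-y_0^2=(x_0+y_0)(x_0-y_0)$ and note that $x_0+y_0=-\lan x+y,e\ran=\lambda$. Hence
\[
\|\hat x\|^2-\|\hat y\|^2=\frac{4(x_0-y_0)}{\lambda}.
\]
The same argument gives
\[
\|\hat u\|^2-\|\hat v\|^2=\frac{4(u_0-v_0)}{\mu}.
\]

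\textbf{Cross terms.} We have
\[
\hat x\cdot\hat u-\hat y\cdot\hat v=\frac{4}{\lambda\mu}\bigl(\lan x,u\ran-\lan y,v\ran + x_0u_0-y_0v_0\bigr).
\]

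\textbf{Combining.} Multiply the total by $\lambda\mu/4$, using $\lambda=x_0+y_0$ and $\mu=u_0+v_0$. The quantity becomes
\[
(x_0-y_0)(u_0+v_0)+(u_0-v_0)(x_0+y_0)-2(x_0u_0-y_0v_0)-2\bigl(\lan x,u\ran-\lan y,v\ran\bigr).
\]
Expanding, the first two products sum to $2x_0u_0-2y_0v_0$, which cancels the third term exactly. Therefore
\[
\|\hat x-\hat u\|^2-\|\hat y-\hat v\|^2=-\frac{8}{\lambda\mu}\bigl(\lan x,u\ran-\lan y,v\ran\bigr).
\]
Since $\lambda\mu\neq0$, the left side vanishes if and only if $\lan x,u\ran=\lan y,v\ran$. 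Both norms are nonnegative, so equality of the squared norms is equivalent to equality of the norms, which gives the lemma.

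The only real obstacle is bookkeeping: one must write the normalizations $\lambda,\mu$ as $x_0+y_0$ and $u_0+v_0$ so that the $x_0^2-y_0^2$ terms factor and the time-coordinate contributions cancel.
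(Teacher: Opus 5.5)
Your proof is correct and is essentially the paper's computation. Both expand $\|\hat x-\hat u\|^2-\|\hat y-\hat v\|^2$ using \eqref{basicpdt}, then use $\lan x,x\ran=\lan y,y\ran$ and $-\lan x+y,e\ran=x_0+y_0$ to reduce the squared terms to $4(x_0-y_0)/(x_0+y_0)$ and $4(u_0-v_0)/(u_0+v_0)$. The time-coordinate contributions then cancel against the cross term, leaving $-\frac{8}{\lambda\mu}\bigl(\lan x,u\ran-\lan y,v\ran\bigr)$.
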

\begin{proof}
    Note that by \eqref{basicpdt} 
    \begin{equation}
        \hat x \cdot \hat u =  \frac{4\PP(x)\cdot\PP(u)}{\lan x+y, e\ran \lan u+v, e\ran } =  \frac{4\lan x+\lan x, e\ran e, u+\lan u, e\ran e \ran}{\lan x+y, e\ran \lan u+v, e\ran } =\frac{4(\lan x,u \ran+\lan x, e\ran \lan u, e\ran)}{\lan x+y, e\ran \lan u+v, e\ran}.
    \end{equation}
   Similarly,  we have 
 \begin{equation}
            \hat x \cdot \hat x  = \frac{4}{\lan x+y, e\ran  ^2}(\lan x, x\ran   + \lan x, e\ran  ^2), \quad \hat u\cdot \hat u = \frac{4}{\lan u+v, e\ran  ^2}(\lan u, u\ran   + \lan u, e\ran  ^2),
    \end{equation}
    and the similar identities for $\hat y\cdot \hat y$, $\hat v\cdot \hat v$, and $\hat y\cdot \hat v$.  Then
    \begin{equation}
        \begin{aligned}
            \|\hat x - \hat u\|^2 - \|\hat y- \hat v\|^2 = &\
            \frac{4}{\lan x+y, e\ran  ^2}(\lan x, e\ran  ^2 - \lan y, e\ran  ^2) +  \frac{4}{\lan u+v, e\ran  ^2}(\lan u, e\ran  ^2 - \lan v, e\ran  ^2)\\
            & -\frac{8}{\lan x+y,e\ran  \lan u+v,e\ran  } (\lan x, e\ran  \lan u, e\ran   - \lan y, e\ran  \lan v, e\ran  +\lan x,u\ran  -\lan y,v\ran  )\\
            = &\ 4 \left(\frac{x_0-y_0}{x_0+y_0} + \frac{u_0-v_0}{u_0+v_0} - \frac{2(x_0u_0-y_0v_0)}{(x_0+y_0)(u_0+v_0)}\right)\\
            & - \frac{8}{\lan x+y,e\ran  \lan u+v,e\ran  } \left(\lan x,u\ran  -\lan y,v\ran\right)\\
            = & - \frac{8}{\lan x+y,e\ran  \lan u+v,e\ran  } \left(\lan x,u\ran  -\lan y,v\ran\right).
        \end{aligned}
    \end{equation}
    Thus, the statement is proved.
\end{proof}

Let $d_\HH$ be the hyperbolic distance in $\mathcal H^n $. Then for any $x,u\in \mathcal H^n $, we have $$\lan x,u\ran   =-\cosh\left(d_{\mathcal{H}}(x,u)\right).$$ The above computation has the following consequence.

\begin{corollary}
    Suppose $x, y, u, v\in \mathcal H^n $. Let $\hat x = P_1(x, y)$, $\hat y = P_2(x, y)$, $\hat u = P_1(u, v)$, and $\hat v = P_2(u, v)$. Then $||\hat x - \hat u|| > ||\hat y - \hat v||$ if and only if $d_\HH(x,u) > d_\HH(y,v)$.
\end{corollary}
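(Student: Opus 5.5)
The plan is to reuse the identity obtained in the proof of Lemma \ref{lem:equal-length}. We do not need only its conclusion but the exact formula derived there. For any $x,y,u,v$ with $\lan x,x\ran=\lan y,y\ran$ and $\lan u,u\ran=\lan v,v\ran$ at which the Pogorelov map is defined, that computation gives
\begin{equation*}
\|\hat x-\hat u\|^2-\|\hat y-\hat v\|^2 = -\frac{8}{\lan x+y,e\ran\,\lan u+v,e\ran}\bigl(\lan x,u\ran-\lan y,v\ran\bigr).
\end{equation*}
Points of $\mathcal H^n$ satisfy $\lan x,x\ran=-1$, so the hypotheses of the lemma hold automatically for $x,y,u,v\in\mathcal H^n$. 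The whole task therefore reduces to a sign analysis of this identity.

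First I will determine the sign of the prefactor. For $x,y\in\mathcal H^n$ we have $\lan x+y,e\ran=-(x_0+y_0)<0$, since $x_0,y_0\ge 1$. In particular, $\Phi$ is defined on $\mathcal H^n\times\mathcal H^n$. Likewise $\lan u+v,e\ran<0$. The product of these two quantities is positive, so the coefficient $-8/(\lan x+y,e\ran\lan u+v,e\ran)$ is strictly negative.

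Next I will substitute $\lan x,u\ran=-\cosh d_\HH(x,u)$ and $\lan y,v\ran=-\cosh d_\HH(y,v)$. The identity then becomes
\begin{equation*}
\|\hat x-\hat u\|^2-\|\hat y-\hat v\|^2=\frac{8}{(x_0+y_0)(u_0+v_0)}\bigl(\cosh d_\HH(x,u)-\cosh d_\HH(y,v)\bigr).
\end{equation*}
Two monotonicity facts finish the argument. Since $\cosh$ is strictly increasing on $[0,\infty)$, the right-hand side is positive exactly when $d_\HH(x,u)>d_\HH(y,v)$. Since Euclidean norms are nonnegative, the left-hand side is positive exactly when $\|\hat x-\hat u\|>\|\hat y-\hat v\|$. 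This proves the equivalence.

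There is no real obstacle here. The only point needing care is that the corollary is a strict-inequality refinement, while Lemma \ref{lem:equal-length} states only the equality case. The proof must therefore use the signed identity from that lemma's proof, not just its statement, and must check that the denominators have a fixed sign on $\mathcal H^n$.
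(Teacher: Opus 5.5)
Your proposal is correct and follows the paper's own argument: the paper derives this corollary directly from the signed identity in the proof of Lemma~\ref{lem:equal-length} together with $\lan x,u\ran=-\cosh d_\HH(x,u)$. Your check that $\lan x+y,e\ran\lan u+v,e\ran=(x_0+y_0)(u_0+v_0)>0$ on $\mathcal H^n$ supplies the sign detail the paper leaves implicit.
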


Lemma \ref{lem:equal-length} also implies that $\Phi$ maps a pair of geodesic segments of the same length in $\mathcal H^n$ to a pair of line segments of the same length in $\R^n$. 

\begin{lemma}
    Suppose $\alpha(t)$ and $\beta(t)$ are two unit-speed geodesics of the same length in $\mathcal{H}^n$ with $t\in[0,l]$. Let $\gamma(t)=P_1(\alpha(t), \beta(t))$ and $\delta(t)=P_2(\alpha(t), \beta(t))$. 
    Then $\gamma$ and $\delta$ are two line segments of the same length in $\R^n$.
\end{lemma}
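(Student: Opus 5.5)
The plan is to compute the images explicitly, using the standard parametrization of hyperbolic geodesics in the hyperboloid model. The equality of lengths will then come from Lemma \ref{lem:equal-length}.

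\textbf{Parametrization.} Write the two unit-speed geodesics as
\[
\alpha(t)=\cosh t\, a+\sinh t\, v,\qquad \beta(t)=\cosh t\, b+\sinh t\, w,
\]
where $a,b\in\mathcal H^n$ and $v,w$ are unit spacelike tangent vectors with $\lan a,v\ran=\lan b,w\ran=0$. Then
\[
-\lan \alpha(t)+\beta(t),e\ran=\cosh t\,(a_0+b_0)+\sinh t\,(v_0+w_0)=:D(t).
\]
Since $\alpha(t),\beta(t)\in\mathcal H^n$ have positive $0$-th coordinates, $D(t)>0$, so $\gamma$ and $\delta$ are defined on $[0,l]$. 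Set $c=a_0+b_0>0$ and $d=v_0+w_0$. Then
\[
\gamma(t)=\frac{2\cosh t\,\PP(a)+2\sinh t\,\PP(v)}{c\cosh t+d\sinh t},
\]
and $\delta(t)$ is given by the same formula with $a,v$ replaced by $b,w$.

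\textbf{Straightness.} Put $s(t)=d\sinh t/D(t)$, so that $c\cosh t/D(t)=1-s(t)$.
\begin{itemize}
\item If $d\neq 0$, then $\gamma(t)=(1-s(t))\,\frac{2\PP(a)}{c}+s(t)\,\frac{2\PP(v)}{d}$, an affine combination of two fixed points.
\item If $d=0$, then $\gamma(t)=\frac{2\PP(a)}{c}+\tanh t\,\frac{2\PP(v)}{c}$.
\end{itemize}
In both cases $\gamma$ lies on a fixed line. Moreover, $s'(t)=cd/D(t)^2$ has constant sign, and $\tanh$ is increasing. So the parameter along the line is monotone in $t$, and $\gamma$ traces the segment between $\gamma(0)$ and $\gamma(l)$ without backtracking. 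The same argument applies to $\delta$, with the same $c$, $d$ and $s(t)$.

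\textbf{Equal length.} Apply Lemma \ref{lem:equal-length} with $x=\alpha(0)$, $y=\beta(0)$, $u=\alpha(l)$, $v=\beta(l)$. All four points lie in $\mathcal H^n$, so $\lan x,x\ran=\lan y,y\ran=\lan u,u\ran=\lan v,v\ran=-1$. Also
\[
\lan x,u\ran=-\cosh l=\lan y,v\ran,
\]
because both geodesics have length $l$. The lemma then gives $\|\gamma(0)-\gamma(l)\|=\|\delta(0)-\delta(l)\|$. Since $\gamma$ and $\delta$ are monotone parametrizations of segments, their lengths equal these endpoint distances, and so they are equal.

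The only delicate points are the case distinction $d=0$ versus $d\neq 0$ and the monotonicity check, which rules out a curve that retraces part of a line. Both are short computations.
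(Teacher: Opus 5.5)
Your proof is correct. The length half is the same as the paper's: Lemma \ref{lem:equal-length} applied to the endpoints, with $\lan x,u\ran=\lan y,v\ran=-\cosh l$.

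For straightness you take a different route.

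\emph{The paper's argument.} It works only with endpoint data. It writes the midpoints as $z=\lambda(x+u)$ and $w=\mu(y+v)$, and deduces $\lambda=\mu$ from $\lan x,u\ran=\lan y,v\ran$. Linearity of $\PP$ and the common denominator then show that $\hat z$ is a convex combination of $\hat x$ and $\hat u$. Extending this from the midpoint to every point of the curve is left implicit. One has to apply the midpoint statement to all corresponding sub-arcs and use continuity, or use $\alpha(t)=\frac{\sinh(l-t)}{\sinh l}x+\frac{\sinh t}{\sinh l}u$. The same goes for ruling out backtracking, which is what lets the curve's length equal the endpoint distance.

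\emph{Your argument.} You parametrize by initial point and initial velocity. Because the two unit-speed parametrizations are synchronized, both curves get the same coefficient functions $\cosh t,\sinh t$ and the same denominator $D(t)$. This gives each whole curve in closed form with an explicit strictly monotone weight, so straightness and monotonicity come out together.

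\emph{Removing the case split.} The split $d=0$ versus $d\neq 0$ is unnecessary. One has
\[
\gamma(t)=\gamma(0)+\frac{2\sinh t}{c\,D(t)}\bigl(c\,\PP(v)-d\,\PP(a)\bigr),
\qquad
\frac{d}{dt}\,\frac{\sinh t}{D(t)}=\frac{c}{D(t)^2}>0 .
\]
The same holds for $\delta$ with $a,v$ replaced by $b,w$ and the same scalar factor.

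One cosmetic point: in your last paragraph $v$ stands for $\beta(l)$, which clashes with the tangent vector $v$ used earlier.
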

We remark that the parametrized curves $\gamma(t)$ and $\delta(t)$ may not be of constant speed.

\begin{proof}
    Let $x = \alpha(0)$, $u = \alpha(l)$, $y = \beta(0)$, $v = \beta(l)$, $z = \alpha(l/2)$ and $w = \beta(l/2)$. We denote the corresponding points on $\gamma$ and $\delta$ for $t=0,l,l/2$ by $\hat x, \hat y, \hat u, \hat v, \hat z$ and $\hat w$. More precisely, $\hat x =P_1(x,y)$, $\hat y=P_2(x,y)$, and so on. Since $d(x,u)=d(y,v)=l$, and $d(x,z)=d(y,w)=l/2$, we have $\lan x, u\ran  =\lan y, v\ran  $, $\lan x, z\ran  =\lan y, w\ran  $.  By Lemma \ref{lem:equal-length}, we have $||\gamma(0)-\gamma(l)||=||\delta(0)-\delta(l)||$.  Therefore, it remains to show that $\gamma$ and $\delta$ are parametrized line segments.  To prove this, it suffices to prove that $\hat z$ is on the line segment connecting $\hat x$ and $\hat u$.

    By the definition of geodesics in $\mathcal H^n $ and the condition that $z$ is the midpoint of $x,u$, there exists $\lambda >0$ such that $z = \lambda(x+u)$. Similarly, there exists $\mu>0$ such that  $w = \mu(y+v)$. Now 
    \begin{equation}
        -1 = \lan z, z\ran   = \lambda^2\lan x+u, x+u\ran = \lambda^2 (-2+2\lan x,u\ran).
    \end{equation}
    Similarly, we have $-1=\mu^2 (-2+2\lan y,v\ran).$ Then by $\lan x, u\ran  =\lan y, v\ran$, we have $\lambda=\mu$.
    Therefore,
    \begin{equation}
    \begin{aligned}
        \hat z & = \frac{2\PP(z)}{-\lan z+w,e\ran  } = \frac{2\PP(\lambda(x+u))}{-\lan \lambda(x+u+y+v),e\ran  }=\frac{2\PP(x+u)}{-\lan x+u+y+v,e \ran}\\
        & = \frac{\lan x+y, e\ran  }{\lan x+y+v+u,e\ran  }\cdot\frac{2\PP(x)}{-\lan x+y, e\ran  } + \frac{\lan v+u, e\ran  }{\lan x+y+v+u,e\ran  }\cdot\frac{2\PP(u)}{-\lan u+v, e\ran  }\\
        & = \frac{\lan x+y, e\ran  }{\lan x+y+v+u,e\ran  } \hat x + \frac{\lan v+u, e\ran  }{\lan x+y+v+u,e\ran  } \hat y.
    \end{aligned}
    \end{equation}
    Let $t={\lan x+y, e\ran  }/{\lan x+y+v+u,e\ran  }$, then $t\in[0,1]$ and $\hat z = t\hat x +(1-t) \hat y$.
\end{proof}

A version of the above lemma for rectifiable curves takes the following form. 

\begin{lemma}\label{lem:path}
    Suppose $\alpha(t)$ and $\beta(t)$ are two  unit-speed rectifiable curves of the same length in $\mathcal{H}^n$ with $t\in[0,l]$. Let $\gamma(t)=P_1(\alpha(t), \beta(t))$ and $\delta(t)=P_2(\alpha(t), \beta(t))$. Then $\gamma(t)$ and $\delta(t)$ are two curves in $\R^n$ with the same speed, i.e., $\|\gamma'(t)\|=\|\delta'(t)\|$. In particular, $\gamma$ and $\delta$ have the same length.
\end{lemma}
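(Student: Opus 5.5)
The approach is to reduce the statement to Lemma \ref{lem:equal-length}, using the fact that both curves have the same speed parametrization. Since $\alpha$ and $\beta$ are unit-speed rectifiable curves in $\mathcal H^n$, they are $1$-Lipschitz for the hyperbolic metric. The maps $P_1,P_2$ are smooth on $\mathcal H^n\times\mathcal H^n$, because there $-\lan x+y,e\ran = x_0+y_0\ge 2$. Hence $\gamma$ and $\delta$ are locally Lipschitz curves in $\R^n$, so they are differentiable almost everywhere and their lengths are $\int_0^l\|\gamma'(t)\|\,dt$ and $\int_0^l\|\delta'(t)\|\,dt$. It therefore suffices to prove $\|\gamma(t)-\gamma(s)\| - \|\delta(t)-\delta(s)\| = o(|t-s|)$ as $s\to t$, or better, to compare the two sides through an exact identity.

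The key computation is the identity derived in the proof of Lemma \ref{lem:equal-length}. That derivation is valid whenever $\lan x,x\ran=\lan y,y\ran$ and $\lan u,u\ran=\lan v,v\ran$, which certainly holds for points of $\mathcal H^n$. It gives
\begin{equation}
\|\hat x-\hat u\|^2-\|\hat y-\hat v\|^2 = -\frac{8\,(\lan x,u\ran-\lan y,v\ran)}{\lan x+y,e\ran\lan u+v,e\ran}.
\end{equation}
Apply this with $x=\alpha(t)$, $y=\beta(t)$, $u=\alpha(s)$, $v=\beta(s)$, and write $\lan x,u\ran=-\cosh d_\HH(\alpha(t),\alpha(s))$ and $\lan y,v\ran=-\cosh d_\HH(\beta(t),\beta(s))$. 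Since $\cosh r = 1+r^2/2+O(r^4)$, the right-hand side equals
\begin{equation}
\frac{4\,\bigl(d_\HH(\alpha(t),\alpha(s))^2-d_\HH(\beta(t),\beta(s))^2\bigr)}{(\alpha_0(t)+\beta_0(t))(\alpha_0(s)+\beta_0(s))}+O(|t-s|^4).
\end{equation}
For a unit-speed rectifiable curve, at almost every $t$ (the metric differentiability points) we have $d_\HH(\alpha(t),\alpha(s))=|t-s|+o(|t-s|)$, and the same holds for $\beta$. At almost every $t$ both statements hold simultaneously, so the bracket is $o(|t-s|^2)$. Dividing by $|t-s|^2$ and letting $s\to t$ at points where $\gamma$ and $\delta$ are also differentiable gives $\|\gamma'(t)\|^2=\|\delta'(t)\|^2$. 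Integrating then yields equal lengths.

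The main obstacle is the measure-theoretic justification of metric differentiability. ``Unit speed'' for a merely rectifiable curve means parametrization by arc length, which only guarantees $\limsup_{s\to t} d/|t-s|\le 1$ pointwise. To get the actual limit $1$ almost everywhere, I would invoke Kirchheim's metric differentiability theorem, or more simply the following elementary argument. Work in the hyperboloid, which is a smooth Riemannian submanifold, so that $\alpha$ is Lipschitz and differentiable almost everywhere as a curve in $\R^{1,n}$. Arc-length parametrization then forces $\lan\alpha'(t),\alpha'(t)\ran=1$ almost everywhere, because the length is the integral of the Riemannian speed, which must be $1$ on a set of full measure. At such points $d_\HH(\alpha(t),\alpha(s))/|t-s|\to 1$. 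The alternative approach, directly differentiating $P_1$ and $P_2$ via the chain rule, would also work, but the distance identity avoids any explicit Jacobian computation.
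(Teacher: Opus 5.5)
Your argument is correct, and it takes a genuinely different route from the paper.

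\textbf{The paper's route.} For smooth $\alpha,\beta$, the paper differentiates $P_1,P_2$ by the quotient rule. It then uses $\lan\alpha,\alpha\ran=-1$, $\lan\alpha',\alpha\ran=0$ and $\lan\alpha',\alpha'\ran=1$ to reduce the numerator of $\|\gamma'\|^2$ to $(\alpha_0\beta_0'-\beta_0\alpha_0')^2+(\alpha_0+\beta_0)^2-(\alpha_0'+\beta_0')^2$, which is visibly symmetric in $\alpha$ and $\beta$. The rectifiable case is handled in one line, by approximating with smooth unit-speed curves and ``taking the limit''.

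\textbf{Your route.} You apply the exact finite-difference identity from Lemma \ref{lem:equal-length} to the pairs $(\alpha(t),\beta(t))$ and $(\alpha(s),\beta(s))$. The speed statement then becomes the infinitesimal version of that lemma, and no Jacobian computation is needed.

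\textbf{What each buys.} Your route gives a genuine treatment of non-smooth curves. Rademacher's theorem plus the a.e.\ identity $\lan\alpha',\alpha'\ran=1$ yield the pointwise statement at almost every $t$ directly. The paper's limiting step is only sketched. Uniform convergence of approximants is not enough there, since length is merely lower semicontinuous; one needs something like $L^1$ convergence of derivatives to get $l(\gamma_n)\to l(\gamma)$ and $l(\delta_n)\to l(\delta)$. In return, the paper's route gives an explicit closed formula for the common speed.

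\textbf{A possible shortening.} Since $\lan x,x\ran=\lan u,u\ran=-1$, one has $-2\lan x,u\ran-2=\lan x-u,x-u\ran$. The identity therefore reads exactly
\[
\|\hat x-\hat u\|^2-\|\hat y-\hat v\|^2=\frac{4\bigl(\lan x-u,x-u\ran-\lan y-v,y-v\ran\bigr)}{(x_0+y_0)(u_0+v_0)}.
\]
Divide by $(t-s)^2$ at any common differentiability point of $\alpha,\beta$ in $\R^{1,n}$. This gives $\|\gamma'\|^2-\|\delta'\|^2=4\bigl(\lan\alpha',\alpha'\ran-\lan\beta',\beta'\ran\bigr)/(\alpha_0+\beta_0)^2=0$. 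You then need neither the $\cosh$ expansion nor the separate metric-differentiability step, only the fact $\lan\alpha',\alpha'\ran=\lan\beta',\beta'\ran=1$ a.e., which you already establish.
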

\begin{proof} Let us begin with the case that both $\alpha$ and $\beta$ are smooth. 
    Suppose that
    $$\alpha(t)=(\alpha_0(t),\alpha_1(t),\dots,\alpha_n(t)),\quad \beta(t)=(\beta_0(t),\beta_1(t),\dots,\beta_n(t)).$$ By definition, we have
    \begin{equation}
        \gamma(t)=\frac{2(\alpha_1(t),\dots,\alpha_n(t))}{\alpha_0(t)+\beta_0(t)},   \quad \delta(t)=\frac{2(\beta_1(t),\dots,\beta_n(t))}{\alpha_0(t)+\beta_0(t)}.
    \end{equation}
    Using the quotient rule, we obtain,
    \begin{equation}\label{eq:gamma}
        \|\gamma'(t)\|^2=\gamma'(t)\cdot\gamma'(t)=\frac{4}{(\alpha_0+\beta_0)^4}\sum\limits_{i=1}^n \left(\alpha'_i(\alpha_0+\beta_0)-\alpha_i(\alpha'_0+\beta'_0)\right)^2.
    \end{equation}
    Similarly, we have
    \begin{equation}\label{eq:delta}
        \|\delta'(t)\|^2=\frac{4}{(\alpha_0+\beta_0)^4}\sum\limits_{i=1}^n (\beta'_i(\alpha_0+\beta_0)-\beta_i\left(\alpha'_0+\beta'_0)\right)^2.
    \end{equation}
    Since $\alpha(t),\beta(t)\in\mathcal H^n $, we have $\lan\alpha(t),\alpha(t)\ran=\lan\beta(t),\beta(t)\ran=-1$. Hence,
    \begin{equation}
       \lan\alpha'(t),\alpha(t)\ran=\lan\beta'(t),\beta(t)\ran=0.
    \end{equation}
    On the other hand, $\alpha(t)$ and $\beta(t)$ are arc-length parametrized, so we have $$\lan \alpha'(t),\alpha'(t)\ran=\lan \beta'(t),\beta'(t)\ran=1.$$
    Therefore,
    \begin{equation}  \label{sums1}
    \begin{aligned}
        &\sum\limits_{i=1}^n \alpha_i^2=\alpha_0^2-1, \quad \sum\limits_{i=1}^n \beta_i^2=\beta_0^2-1, \\
        &\sum\limits_{i=1}^n \alpha_i\alpha'_i=\alpha_0\alpha'_0, \quad \sum\limits_{i=1}^n\beta_i\beta'_i=\beta_0\beta'_0,\\
        &\sum\limits_{i=1}^n (\alpha'_i)^2=(\alpha'_0)^2+1, \quad \sum\limits_{i=1}^n (\beta'_i)^2=(\beta'_0)^2+1. 
    \end{aligned}
    \end{equation}
    Using \eqref{sums1}, we can simplify the numerator 
    in (\ref{eq:gamma}) as, 
    \begin{align}
        &\sum\limits_{i=1}^n (\alpha'_i(\alpha_0+\beta_0)-\alpha_i(\alpha'_0+\beta'_0))^2\\
        =&\ (\alpha_0+\beta_0)^2\sum\limits_{i=1}^n (\alpha'_i)^2 +(\alpha'_0+\beta'_0)^2 \sum\limits_{i=1}^n(\alpha_i)^2 - 2(\alpha'_0+\beta'_0)(\alpha_0+\beta_0) \sum\limits_{i=1}^n \alpha_i\alpha'_i\\
        =&\ (\alpha_0+\beta_0)^2\bigl((\alpha'_0)^2+1\bigr) + (\alpha'_0+\beta'_0)^2(\alpha_0^2-1) - 2(\alpha'_0+\beta'_0)(\alpha_0+\beta_0) \alpha_0\alpha'_0\\
        =&\ (\alpha_0')^2\beta_0^2+\alpha_0^2(\beta_0')^2-2\alpha_0\alpha_0'\beta_0\beta_0' +(\alpha_0+\beta_0)^2
        -(\alpha_0'+\beta_0')^2\\
        = &\ (\alpha_0\beta_0'-\beta_0\alpha_0')^2+ (\alpha_0+\beta_0)^2-(\alpha_0'+\beta_0')^2,
    \end{align}
which does not depend on $\alpha_i$ and $\beta_i$ for $i = 1, \cdots, n$.
Moreover, this expression is symmetric with respect to $\alpha_0$ and $\beta_0$. Hence, we have
    $$\sum\limits_{i=1}^n \left(\alpha'_i(\alpha_0+\beta_0)-\alpha_i(\alpha'_0+\beta'_0)\right)^2 = \sum\limits_{i=1}^n \left(\beta'_i(\alpha_0+\beta_0)-\beta_i(\alpha'_0+\beta'_0)\right)^2,$$
    and then $\|\gamma'(t)\|^2=\|\delta'(t)\|^2$ from \eqref{eq:gamma} and \eqref{eq:delta}.

Now, if $\alpha(t)$ and $\beta(t)$, $t \in [0, L]$,  are rectifiable, we approximate them by unit-speed smooth curves $\alpha_n(t)$ and $\beta_n(t)$ with $t \in [0, L_n]$ such that $\lim_n L_n =L$. The lemma follows from the smooth case and taking the limit.   
\end{proof}

\subsection{The Pogorelov map on the hyperboloid $\mathcal{H}^n$ }

In this subsection, we illustrate some fundamental properties of the Pogorelov map $\Phi$ when the domain of $\Phi$ is restricted to $\mathcal H^n \times \mathcal H^n $.  Note that if $x \in \mathcal H^n$, then $-\lan x,e\ran=\sqrt{1+\|\PP(x)\|^2} >\|\PP(x)\|$. Hence, for any $x,y\in\mathcal H^n $, 
    \begin{equation}
    \label{boundness}
        \|P_1(x,y)\|+\|P_2(x,y)\|=\frac{2\|\PP(x)\|+2\|\PP(y)\|}{-\lan x,e\ran-\lan y,e\ran}<2. 
    \end{equation}
 It implies that the image of $\mathcal H^n \times\mathcal H^n $ under $\Phi(x,y) =(P_1(x,y), P_2(x,y))$ is in $\Omega = \{(\hat x, \hat y)\in \mathbb{R}^n\times \mathbb{R}^n: \|\hat x\| + \|\hat y\| < 2\}. $  It turns out the map $\Phi|:  \mathcal H^n \times \mathcal H^n \to \Omega$ is a diffeomorphism.  To show this, we define the function $f(a,b)$ for $(a,b) \in \Omega$ by \begin{equation}
    \begin{aligned}
        f(a, b) & = \|a\|^4-2\|a\|^2\|b\|^2+\|b\|^4-8\|a\|^2-8\|b\|^2+16\\
        & = (||a||^2 - ||b||^2)^2 - 8(||a||^2 + ||b||^2) + 16\\
        & = (2+\|a\|+\|b\|)(2-\|a\|+\|b\|)(2+\|a\|-\|b\|)(2-\|a\|-\|b\|).
    \end{aligned}
\end{equation}
Note that $f(a,b)>0$. 

\begin{proposition}[Proposition 6.3, \cite{Virto}]\label{prop:Pogorelov-inverse}
    The map $\Psi\from\Omega \to \mathcal{H}^n\times\mathcal{H}^n$ given by
    \begin{equation}
        \Psi(a, b) = ((\sqrt{\|\hat x\|^2 + 1 }, \hat x),(\sqrt{\|\hat y\|^2 + 1 }, \hat y) ),
    \end{equation}
    is the inverse of the Pogorelov map $\Phi|_{\mathcal H^n \times\mathcal H^n }$, where
    \begin{equation}
        \hat x = \frac{4a}{\sqrt{f(a,b)}}, \hat y = \frac{4b}{\sqrt{f(a,b)}}.
    \end{equation}
   In particular, the Pogorelov map $\Phi|_{\mathcal H^n \times \mathcal H^n}$ is a diffeomorphism from $\mathcal{H}^n\times \mathcal{H}^n$ to $\Omega$.
\end{proposition}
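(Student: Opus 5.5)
The plan is to verify directly that the two compositions $\Psi\circ\Phi$ and $\Phi\circ\Psi$ are identities. Note that $\Phi$ maps $\mathcal H^n\times\mathcal H^n$ into $\Omega$ by \eqref{boundness}, and $f>0$ on $\Omega$ by its factorization. Smoothness of both maps is then immediate, since all denominators are positive: $-\langle x+y,e\rangle=x_0+y_0>0$ on $\mathcal H^n\times\mathcal H^n$, and $f>0$ on $\Omega$. So the only real content is the two algebraic identities. Throughout I write $A=\|a\|^2$ and $B=\|b\|^2$. I read the $\hat x,\hat y$ in the formula for $\Psi$ as $4a/\sqrt{f(a,b)}$ and $4b/\sqrt{f(a,b)}$.

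\emph{Step 1: $\Psi\circ\Phi=\mathrm{id}$.} Take $(x,y)\in\mathcal H^n\times\mathcal H^n$ and put $s=x_0+y_0$, $a=2\PP(x)/s$, $b=2\PP(y)/s$. The key claim is $f(a,b)=64/s^2$. Granting it, $4a/\sqrt{f}=\PP(x)$ and $4b/\sqrt f=\PP(y)$. Since points of $\mathcal H^n$ are determined by their $\PP$-projection via $x_0=\sqrt{1+\|\PP(x)\|^2}$, this gives $\Psi(a,b)=(x,y)$.

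To check the claim, use $\|\PP(x)\|^2=x_0^2-1$ and $\|\PP(y)\|^2=y_0^2-1$. These give $A=4(x_0^2-1)/s^2$ and $B=4(y_0^2-1)/s^2$, so $A-B=4(x_0-y_0)/s$. Substituting into $f=(A-B)^2-8(A+B)+16$ gives
\[
s^2 f = 16(x_0-y_0)^2-32(x_0^2+y_0^2-2)+16(x_0+y_0)^2 = 64,
\]
because the quadratic terms cancel.

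\emph{Step 2: $\Phi\circ\Psi=\mathrm{id}$.} Take $(a,b)\in\Omega$ and let $x_0=\sqrt{1+\|\hat x\|^2}$ and $y_0=\sqrt{1+\|\hat y\|^2}$ be the time coordinates of $\Psi(a,b)$. Then
\[
P_1(\Psi(a,b))=\frac{2\hat x}{x_0+y_0}=\frac{8a}{\sqrt f\,(x_0+y_0)},
\]
and likewise for $P_2$. So it suffices to show $x_0+y_0=8/\sqrt f$.

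Now $x_0^2=(f+16A)/f$. The useful observation is
\[
f+16A=(A-B)^2+8(A-B)+16=(A-B+4)^2,
\]
and symmetrically $f+16B=(B-A+4)^2$. On $\Omega$ we have $\|a\|,\|b\|<2$, hence $|A-B|<4$. Therefore $\sqrt{f+16A}=4+A-B$ and $\sqrt{f+16B}=4+B-A$. Summing gives $\sqrt f\,(x_0+y_0)=8$, as needed.

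The only place where care is required is this choice of the positive square root, i.e.\ the sign of $4\pm(A-B)$. That is exactly where the hypothesis $(a,b)\in\Omega$ enters. With both compositions equal to the identity and both maps smooth, $\Phi|_{\mathcal H^n\times\mathcal H^n}$ is a diffeomorphism onto $\Omega$ with inverse $\Psi$.
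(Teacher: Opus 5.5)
Your proof is correct and follows the paper's argument essentially step for step: you compute $f(a,b)=64/(x_0+y_0)^2$ to get $\Psi\circ\Phi=\mathrm{id}$, and you use $f+16\|a\|^2=(\|a\|^2-\|b\|^2+4)^2$ to obtain $x_0+y_0=8/\sqrt{f}$, which gives $\Phi\circ\Psi=\mathrm{id}$. You also check explicitly that $4\pm(\|a\|^2-\|b\|^2)>0$ on $\Omega$, which justifies the choice of square root that the paper takes without comment.
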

Since the manuscript~\cite{Virto} has not yet been published, we present Virto's proof for completeness. 
\begin{proof} 
    It is a straightforward computation to verify that $\Psi(\Phi(x,y))=(x,y)$ and $\Phi(\Psi(a,b))=(a,b)$, for any $x,y\in\mathcal H^n $ and $(a,b)\in\Omega$. Indeed,  let $a  = -{2\PP(x)}/{\lan x+y,e\ran}$ and $b =  -{2\PP(y)}/{\lan x+y,e\ran}$. Then 
    $$||a||^2 = \frac{4 \lan x+\lan x, e\ran e, x+\lan x, e\ran e \ran}{\lan x+y,e\ran^2} = \frac{4(\lan x, e\ran^2 - 1)}{\lan x+y,e\ran^2}, \quad ||b||^2 = \frac{4(\lan y, e\ran^2 - 1)}{\lan x+y,e\ran^2},$$
    and 
    $$f(a, b) = \frac{16}{\lan x+y,e\ran^4}(\lan x, e\ran^2 - \lan y, e\ran^2)^2 - \frac{32}{\lan x+y,e\ran^4}(\lan x, e\ran^2 + \lan y, e\ran^2-2) + 16$$
    $$ = \frac{16}{\lan x+y,e\ran^2}\Big((\lan x, e\ran - \lan y, e\ran)^2 - 2\lan x, e\ran^2 - 2\lan y, e\ran^2 +4 + (\lan x, e\ran + \lan y, e\ran)^2    \Big) =\frac{64}{\lan x+y,e\ran^2}.$$
    Then we have
    $$\hat x = \frac{4a}{\sqrt{f(a,b)}} = {\frac{8\PP(x)}{-\lan x+y,e\ran}}\Big/{\frac{8}{-\lan x+y,e\ran}} = \PP(x),$$ 
    $$\hat y = \frac{4b}{\sqrt{f(a,b)}} = {\frac{8\PP(y)}{-\lan x+y,e\ran}}\Big/{\frac{8}{-\lan x+y,e\ran}} = \PP(y).$$
    Therefore
    $$\sqrt{\|\hat x\|^2 + 1 } = \sqrt{\|\PP(x)\|^2 + 1} = x_0, \quad \sqrt{\|\hat y\|^2 + 1 } = \sqrt{\|\PP(y)\|^2 + 1} = y_0.$$
    This shows $\Psi(\Phi(x,y))=(x,y)$. 
    
    Conversely, by definition, $\Psi(a, b) = (x, y)$, where
    \begin{equation}\label{eq:converse}
        x = \left(\sqrt{\frac{16||a||^2}{f}+1}, \frac{4a}{\sqrt{f}}\right), \quad y = \left(\sqrt{\frac{16||b||^2}{f}+1}, \frac{4b}{\sqrt{f}}\right).
    \end{equation}
    Notice that 
    $$\sqrt{\frac{16||a||^2}{f}+1} = \sqrt{\frac{(||a||^2 - ||b||^2)^2 +8(||a||^2 - ||b||^2) + 16}{f}}  
    = \frac{||a||^2 - ||b||^2 + 4}{\sqrt{f}},$$
    and
    $$\sqrt{\frac{16||b||^2}{f}+1} = \sqrt{\frac{(||a||^2 - ||b||^2)^2 +8(||b||^2 - ||a||^2) + 16}{f}}  
    =\frac{||b||^2 - ||a||^2 + 4}{\sqrt{f}}.$$
    We have 
    $$-\lan x+y,e\ran =\frac{||b||^2 - ||a||^2 + 4}{\sqrt{f}} + \frac{||a||^2 - ||b||^2 + 4}{\sqrt{f}} = \frac{8}{\sqrt{f}}. $$
    Therefore,  
    \begin{equation*}
        \Phi(\Psi(a,b)) = \Phi(x, y) = \left({2\cdot\frac{4a}{\sqrt{f}}}\Big/{\frac{8}{\sqrt{f}}},2\cdot\frac{4b}{\sqrt{f}}\Big/{\frac{8}{\sqrt{f}}} \right) = (a, b).\qedhere
    \end{equation*}
\end{proof}

\subsection{Congruency-preserving property}

The following proposition comes from \cite{Virto}, and here we present a different proof. This property was originally established by Pogorelov \cite{Pogorelov}, where he showed that two isometric surfaces are congruent in the spherical geometry $\mathbb S^n$  if and only if their images are congruent in the Euclidean space. 

\begin{proposition}[Proposition 6.5, \cite{Virto}]\label{prop:AtoB}
    For any $A\in \Isom(\mathcal{H}^n)$, there exists a unique $B\in \Isom(\mathbb{R}^n)$ such that for any $x\in \mathcal{H}^n$, 
    $\Phi(x, Ax) = (y, By)$ and $\Phi(x, A^{-1}x) =(z, B^{-1}z)$.
\end{proposition}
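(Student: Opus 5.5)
The approach is to define $B$ directly on the set of first coordinates. We show it is distance-preserving using Lemma \ref{lem:equal-length}, and then extend it to all of $\R^n$.

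Fix $A\in\Isom(\mathcal H^n)$, viewed as a Lorentz transformation preserving $\mathcal H^n$. Then $Ax\in\mathcal H^n$ for every $x\in\mathcal H^n$, so $-\lan x+Ax,e\ran>0$ and $\Phi(x,Ax)$ is defined. Let
$$S=\{P_1(x,Ax): x\in\mathcal H^n\}\subset\R^n .$$
On $S$ we define $B(P_1(x,Ax))=P_2(x,Ax)$.

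\emph{Well-definedness and isometry on $S$.} For $x,u\in\mathcal H^n$, put $y=Ax$ and $v=Au$. Then $\lan x,x\ran=\lan y,y\ran=-1$, $\lan u,u\ran=\lan v,v\ran=-1$, and $\lan x,u\ran=\lan Ax,Au\ran=\lan y,v\ran$ because $A$ is Lorentz. Lemma \ref{lem:equal-length} therefore gives
$$\|P_1(x,Ax)-P_1(u,Au)\|=\|P_2(x,Ax)-P_2(u,Au)\|.$$
Taking the left side to be $0$ shows that $B$ is well defined on $S$. The same identity shows that $B\from S\to\R^n$ preserves Euclidean distances.

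\emph{$S$ is not contained in an affine hyperplane.} This is the step needing some care. Fix a unit vector $\xi\in\R^n$ and let $x_t=(\cosh t,\sinh t\,\xi)$. Then $(Ax_t)_0$ grows like $c(\xi)e^{t}$ for some constant $c(\xi)>0$, because the image of the null ray $(1,\xi)$ under $A$ is future-pointing. Hence
$$P_1(x_t,Ax_t)=\frac{2\sinh t\,\xi}{\cosh t+(Ax_t)_0}\longrightarrow \frac{2\xi}{1+2c(\xi)},$$
a positive multiple of $\xi$. So the closure of $S$ contains a point $\lambda(\xi)\xi$ with $\lambda(\xi)>0$ for every unit direction $\xi$. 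Such a set cannot lie in a single affine hyperplane $\{w\cdot p=c\}$: if $c\ge 0$, take $\xi$ with $\xi\cdot w<0$; if $c<0$, take $\xi$ with $\xi\cdot w>0$; in either case $w\cdot\lambda(\xi)\xi$ has the wrong sign. Consequently $S$ contains $n+1$ affinely independent points. Alternatively, one can check that the differential of $x\mapsto P_1(x,Ax)$ has full rank at some point.

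\emph{Extension and uniqueness.} A distance-preserving map defined on a subset of $\R^n$ whose affine hull is $\R^n$ extends uniquely to a Euclidean isometry. Indeed, it preserves the Gram data of difference vectors, so it is the restriction of a unique affine isometry. This gives $B\in\Isom(\R^n)$ with $\Phi(x,Ax)=(y,By)$, where $y=P_1(x,Ax)$. Any $B'$ with the same property agrees with $B$ on $S$, hence $B'=B$.

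\emph{The statement for $A^{-1}$.} For $x\in\mathcal H^n$, set $u=A^{-1}x$. The denominator $-\lan x+y,e\ran$ is symmetric in $(x,y)$, so
$$\Phi(x,A^{-1}x)=\Phi(Au,u)=\bigl(P_2(u,Au),P_1(u,Au)\bigr)=(Bz',z'),$$
where $z'=P_1(u,Au)$. Writing $z=Bz'$, this reads $\Phi(x,A^{-1}x)=(z,B^{-1}z)$, which is the second assertion.
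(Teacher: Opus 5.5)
Your proof is correct, but the existence part takes a different route from the paper's.

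The paper builds $B$ explicitly. It composes $A$ with the Lorentz reflection $F_{Ae+e}$ to obtain a Lorentz map $T$ with $T(e)=-e$, so $T$ restricts to an orthogonal map of $e^\perp\cong\R^n$. It then sets $\alpha=-2(Ae+\lan Ae,e\ran e)/\lan Ae+e,e\ran$ and verifies by direct algebra that $P_2(x,Ax)=T\,P_1(x,Ax)+\alpha$.

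You instead define $B$ on the image set $S$ through the pairing itself. Lemma \ref{lem:equal-length}, together with $\lan x,u\ran=\lan Ax,Au\ran$, makes $B$ well defined and distance-preserving on $S$. The classical fact that a distance-preserving map on an affinely spanning subset of $\R^n$ is the restriction of a unique Euclidean motion then finishes the argument.

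\textbf{Comparison.} Your route is shorter and more conceptual: it reuses an already-proved lemma and requires no guess of the reflection $F_{Ae+e}$. The paper's route gives explicit formulas for the linear part and the translation, and this explicitness feeds the remark that $\|\alpha\|<2$ and the converse Proposition \ref{prop:BtoA}. Your approach recovers that remark easily as well. Since $P_1(e,Ae)=0\in S$, the translation part of $B$ is $B(0)=P_2(e,Ae)=2\PP(Ae)/(1+(Ae)_0)$. This is exactly the paper's $\alpha$, and its norm is less than $2$ by the bound $\|P_1(x,y)\|+\|P_2(x,y)\|<2$.

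\textbf{A simplification.} Your asymptotic argument that $S$ affinely spans $\R^n$ is valid but more than is needed. The formula $P_1(x,Ax)=2\PP(x)/(x_0+(Ax)_0)$ is already a positive multiple of $\PP(x)$, and $P_1(e,Ae)=0$. So $S$ contains $0$ and a positive multiple of every nonzero vector, with no limits required. This is precisely the observation the paper uses for uniqueness.
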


\begin{proof}
    Let $(y, y') = \Phi(x, Ax)$, i.e., 
   $$y = \frac{2\PP(x)}{-\lan x+Ax, e\ran}, \quad y' = \frac{2\PP(Ax)}{-\lan x+Ax, e\ran}.$$
By identifying $\{0\} \times \R^n$ with $\R^n,$  we have
    $$y = \frac{2(x+\lan x,e\ran  e)}{-\lan x+Ax, e\ran}, \quad y' = \frac{2(Ax+\lan Ax,e\ran  e)}{-\lan x+Ax, e\ran}.$$

    It is well-known that if $u \in \R^{1,n}$ with $\lan u, u \ran \neq 0$, then $$F_u(x) = x -2\cdot\frac{\lan x, u \ran}{\lan u, u \ran} \cdot u$$ is a Lorentz transformation.  If we take $u=Ae+e$ and use $\lan Ae, e \ran <0$, then $\lan u, u \ran =2 \lan Ae+e, e \ran \neq 0$. Therefore, using $\lan Ax, Ae+e \ran =\lan Ax+x, e \ran$, 
the following linear map $T$, which is the composition of $A$ with $F_{Ae+e}$, is a Lorentz transformation, 
    \begin{equation}
        Tx = Ax - \frac{\lan Ax+x, e\ran  }{\lan Ae+e, e\ran  }(Ae+e)
    \end{equation}
Note that $$T(e) =  Ae-\frac{\lan Ae+e, e\ran  }{\lan Ae+e, e\ran  }(Ae+e)=  -e.$$ Hence, the restriction of $T$ to the orthogonal complement of $e$, i.e., $\{0\} \times \R^n$, is an isometry in $\Isom(\R^n)$.

Consider the vector
   \begin{equation}
       \alpha = -2\frac{Ae+\lan Ae,e\ran e}{\lan Ae+e, e\ran  }\in\{0\}\times\R^n.
    \end{equation}  Then, we have
    \begin{equation}
        \begin{aligned}
            \frac{1}{2}(y'-Ty-\alpha) =& \frac{Ax+\lan Ax,e\ran  e}{-\lan x+Ax, e\ran} - T\left(\frac{x+\lan x,e\ran  e}{-\lan x+Ax, e\ran}\right) + \frac{Ae+\lan Ae,e\ran e}{\lan Ae+e, e\ran  }\\
            =& -\frac{Ax+\lan Ax,e\ran  e}{\lan x+Ax, e\ran} + \frac{Tx}{\lan x+Ax, e\ran} + \frac{\lan x,e\ran  Te}{\lan x+Ax, e\ran} + \frac{Ae+\lan Ae,e\ran e}{\lan Ae+e, e\ran  }\\
            =& -\frac{Ax+\lan Ax,e\ran  e}{\lan x+Ax, e\ran} + \frac{1}{\lan x+Ax, e\ran}\left(Ax - \frac{\lan Ax+x, e\ran  }{\lan Ae+e, e\ran  }(Ae+e)\right)\\
            &- \frac{\lan x,e\ran  e}{\lan x+Ax, e\ran} + \frac{Ae+\lan Ae,e\ran e}{\lan Ae+e, e\ran  }\\
            =&-\frac{\lan Ax,e\ran  e+\lan x,e\ran  e}{\lan x+Ax, e\ran}-\frac{Ae+e}{\lan Ae+e, e\ran}+ \frac{Ae+\lan Ae,e\ran e}{\lan Ae+e, e\ran  }\\
            =&-\frac{\lan x+Ax, e\ran}{\lan x+Ax, e\ran}e+\frac{\lan Ae+e, e\ran}{\lan Ae+e, e\ran}e=0.
        \end{aligned}
    \end{equation}
    Hence we have $y'=Ty+\alpha$. 
    It follows that $By=Ty+\alpha$ is an isometry of $\R^n$, i.e. $y'=By$.  By definition of $y, y'$, we have $\Phi(x, Ax) =(y, y') =(y, By)$.

    The uniqueness of $B$ follows from the fact that an isometry is determined by its restriction to a set of $n+1$ points which are not contained in a codimension-1 hyperplane in $\R^n$. For example, let $\{v_0, v_1, \cdots, v_n\}$ be $n+1$ vectors in $\mathcal H^n$ such that $v_0 = e$ and $\PP(v_i)$ are independent  in $\R^n$ for $i = 1, \cdots, n$. Then $\Phi(v_i, Av_i) = (u_i, Bu_i)$ with $u_0 = 0$ and $u_1, u_2, ..., u_n$  are linearly independent vectors in $\R^n$. If there is another $B'$ such that $\Phi(v_i, Av_i) = (u_i, B'u_i)$, then $Bu_i = B'u_i$ for $i = 0, \cdots, n$. Hence, $B = B'$.

    Finally, by the symmetry of Pogorelov map, we have
    \begin{equation*}
        \Phi(x,A^{-1}x)=\Phi\left(A(A^{-1}x),A^{-1}x\right)=(By'',y'')=(z,B^{-1}z).\qedhere
    \end{equation*}
\end{proof}

We note that the vector $\alpha$ constructed above has length less than 2. Moreover, the converse of Proposition \ref{prop:AtoB} also holds with this length restriction.

\begin{proposition}\label{prop:BtoA}
    For an isometry $v\mapsto Tv+\alpha$ in $\Isom(\R^n)$ with $T\in \mathrm{O}(n)$ being an orthogonal transformation and $\alpha\in\R^n$ with $\|\alpha\|<2$, there exists a unique $A\in \Isom(\mathcal H^n )$ such that for all $y \in \R^n$ with $(y,Ty+\alpha)\in\Omega$, 
    $\Psi(y, Ty+\alpha) = (x, Ax)$.
    Furthermore, the matrix in $\Isom(\mathcal H^n)$ associated to $v\mapsto T^{-1}v-T^{-1}\alpha$, the inverse  of $v\mapsto Tv+\alpha \in \Isom(\R^n)$, is $A^{-1}$, i.e., for any $y\in\R^n$ such that $(y, T^{-1}y -T^{-1}\alpha)\in\Omega$, 
    $\Psi(y, T^{-1}y -T^{-1}\alpha) =(x', A^{-1}x')$. 
    \end{proposition}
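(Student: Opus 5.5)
The plan is to invert the construction in the proof of Proposition \ref{prop:AtoB}. Given $B(v)=Tv+\alpha$ with $\|\alpha\|<2$, I first produce a Lorentz transformation $A$ preserving $\mathcal H^n$ whose associated Euclidean isometry, built as in that proof, is exactly $B$. Existence and the identity $\Psi(y,Ty+\alpha)=(x,Ax)$ then follow from Proposition \ref{prop:AtoB} and the bijectivity of $\Phi|_{\mathcal H^n\times\mathcal H^n}$ (Proposition \ref{prop:Pogorelov-inverse}).

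\emph{Step 1 (recover $Ae$ from $\alpha$).} In the proof of Proposition \ref{prop:AtoB}, writing $Ae=(a_0,\PP(Ae))$ with $a_0\ge 1$, we have $\alpha=2\PP(Ae)/(a_0+1)$. So I would look for a point $w\in\mathcal H^n$ with $2\PP(w)/(w_0+1)=\alpha$. Geometrically, $\alpha/2$ is the image of $w$ under the projection from $-e$ onto the Poincar\'e ball. This map is a bijection from $\mathcal H^n$ onto the open ball of radius $1$, so for $\|\alpha/2\|<1$ the point $w$ exists and is unique. Concretely, $w_0=(1+\|\alpha\|^2/4)/(1-\|\alpha\|^2/4)$ and $\PP(w)=(w_0+1)\alpha/2$. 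This is exactly where the hypothesis $\|\alpha\|<2$ enters.

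\emph{Step 2 (recover $A$ from $T$ and $w$).} In Proposition \ref{prop:AtoB}, $\tilde T=F_{Ae+e}\circ A$ fixes $e^\perp=\{0\}\times\R^n$ setwise, acts there as the orthogonal map $T$, and satisfies $\tilde T e=-e$. Since $F_u$ is an involution, $A=F_{Ae+e}\circ\tilde T$. So I define $\tilde T$ as the Lorentz map with $\tilde Te=-e$ and $\tilde T|_{e^\perp}=T$, set $u=w+e$, and put $A=F_u\circ\tilde T$. I then need three checks:
\begin{itemize}
\item $Ae=F_u(-e)=w$. This holds because $\lan -e,u\ran/\lan u,u\ran=\tfrac12$ when $\lan w,w\ran=\lan e,e\ran=-1$.
\item $A$ preserves $\mathcal H^n$. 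This holds because $A$ is Lorentz and sends $e$ into the upper sheet.
\item The linear part produced by the formula in Proposition \ref{prop:AtoB} equals $\tilde T$. The translation part is then $\alpha$ by Step 1.
\end{itemize}
After that, Proposition \ref{prop:AtoB} gives $\Phi(x,Ax)=(y,By)$ for all $x$. For $(y,By)\in\Omega$, set $(x,x')=\Psi(y,By)$. Then I need to show $x'=Ax$. I would compare $\Phi(x,Ax)=(y,By)$ with $\Phi(x,x')=(y,By)$. For fixed $x$, the first component $2\PP(x)/(x_0+z_0)$ determines $z_0$, provided $\PP(x)\ne0$. The second component then determines $\PP(z)$, so $x'=Ax$. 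The degenerate case $\PP(x)=0$ needs separate handling. There, $y=0$ and $By=\alpha$, and I would argue by continuity or directly from injectivity of $\Phi$ together with the surjectivity of $x\mapsto y$.

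\emph{Uniqueness and inverse.} Suppose $A'$ also works. Take any $x$ and put $y=P_1(x,Ax)$, so that $(y,By)\in\Omega$. Then $\Psi(y,By)=(x,Ax)=(x,A'x)$, hence $A=A'$. For the inverse statement, apply the last clause of Proposition \ref{prop:AtoB}: $\Phi(x,A^{-1}x)=(z,B^{-1}z)$. Then invert via $\Psi$ exactly as above.

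I expect the main obstacle to be the bookkeeping in Step 2. One has to verify that the formulas of Proposition \ref{prop:AtoB} applied to $A=F_u\tilde T$ really return $T$ and $\alpha$, without sign errors from $\lan\cdot,e\ran=-x_0$.
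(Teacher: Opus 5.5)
Your construction of $A$ is correct. In fact $F_u\circ\tilde T$ with $u=w+e$ is exactly the paper's explicit matrix: its first column $(4+\|\alpha\|^2,4\alpha)/(4-\|\alpha\|^2)$ is your $w$. Your uniqueness argument is also fine. There is one sign slip: $\lan -e,u\ran/\lan u,u\ran=(w_0+1)/(-2(w_0+1))=-\tfrac12$, and that value is what gives $F_u(-e)=-e+u=w$.

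\textbf{The gap is in the existence step.} Proposition \ref{prop:AtoB} gives $\Phi(x,Ax)=(\hat y,B\hat y)$ with $\hat y=P_1(x,Ax)$ determined by $x$. So $\Phi$ maps the graph of $A$ \emph{into} the graph of $B$ intersected with $\Omega$. You start from a given $y$, set $(x,x')=\Psi(y,By)$, and then ``compare $\Phi(x,Ax)=(y,By)$ with $\Phi(x,x')=(y,By)$''. In doing so you silently assume $\hat y=y$. For $\PP(x)\neq0$ this is the same as $(Ax)_0=x'_0$, which is part of the conclusion. If both really equalled $(y,By)$, injectivity of $\Phi$ would finish immediately, so the component-by-component comparison does no work. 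What is actually needed is the reverse inclusion: that $x\mapsto P_1(x,Ax)$ maps $\mathcal H^n$ \emph{onto} $\{y:(y,By)\in\Omega\}$. This is precisely the ``surjectivity of $x\mapsto y$'' you invoke without proof for the degenerate case. The inverse clause, argued ``exactly as above'', has the same hole.

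\textbf{A short fix.} Show that for fixed $x\in\mathcal H^n$ at most one $z\in\mathcal H^n$ satisfies $P_2(x,z)=B\bigl(P_1(x,z)\bigr)$. Write $s=x_0+z_0>0$; then the condition says $\PP(z)=T\PP(x)+\tfrac s2\alpha$. Substitute this into $(s-x_0)^2=1+\|\PP(z)\|^2$ and use $x_0^2=1+\|\PP(x)\|^2$ to get
\[
s\Bigl(1-\frac{\|\alpha\|^2}{4}\Bigr)=2x_0+\alpha\cdot T\PP(x).
\]
This is linear in $s$ with nonzero coefficient precisely because $\|\alpha\|<2$. So $s$, and hence $z$, is determined by $x$. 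Both $z=Ax$ (by Proposition \ref{prop:AtoB}) and $z=x'$ (by construction) satisfy the condition, so $x'=Ax$. No special case is needed for $\PP(x)=0$.

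For the inverse statement, apply the same lemma to $B^{-1}$, whose translation part $-T^{-1}\alpha$ also has norm $<2$, together with the last clause of Proposition \ref{prop:AtoB}. Alternatively, use the swap-symmetry of $\Psi$ as the paper does.

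The paper sidesteps the whole issue differently. It computes $\Psi(y,Ty+\alpha)$ in closed form and checks by matrix multiplication that the second component is $A$ applied to the first. Your route, once patched, has the advantage of explaining where that matrix comes from.
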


\begin{proof}
    We first verify that the matrix
    \begin{equation}
        A=\frac{1}{4-\|\alpha\|^2}\begin{pmatrix}
            4+\|\alpha\|^2 & 4\alpha^T T\\
            4\alpha & 2\alpha\alpha^T T+(4-\|\alpha\|^2)T
        \end{pmatrix}
    \end{equation}
    is a Lorentz transformation satisfying the requirement. 
    Indeed, let $J$ be the diagonal matrix $\diag(-1,1,\dots,1)$. We have
    \begin{equation}
    \begin{aligned}
    &\ (4-\|\alpha\|^2)^2 A J A^T\\
    =&\begin{pmatrix}
        -(4+\|\alpha\|^2) & 4\alpha^T T \\
        -4\alpha & 2\alpha\alpha^T T+(4-\|\alpha\|^2)T
    \end{pmatrix}
    \begin{pmatrix}
        4+\|\alpha\|^2 & 4\alpha^T \\
        4T^T\alpha & 2T^T\alpha\alpha^T +(4-\|\alpha\|^2)T^T
    \end{pmatrix} \\
    =&\begin{pmatrix}
        M_{11} & M_{12} \\
        M_{21} & M_{22}
    \end{pmatrix} 
    = (4-\|\alpha\|^2)^2 J,
    \end{aligned}
    \end{equation}
    where
    \begin{equation}
    \begin{aligned}
    &M_{11} = -(4+\|\alpha\|^2)^2 + 16\|\alpha\|^2, \\
    &M_{12} = -4(4+\|\alpha\|^2)\alpha^T + 4\alpha^T T\big(2T^T\alpha\alpha^T +(4-\|\alpha\|^2)T^T\big), \\
    &M_{21} = -4\alpha(4+\|\alpha\|^2) + \big(2\alpha\alpha^T T+(4-\|\alpha\|^2)T\big)4T^T\alpha, \\
    &M_{22} = -16\alpha\alpha^T + \big(2\alpha\alpha^T T+(4-\|\alpha\|^2)T\big)\big(2T^T\alpha\alpha^T +(4-\|\alpha\|^2)T^T\big).
    \end{aligned}
    \end{equation}
    Moreover, let $f = f(y, Ty+\alpha)$; then the computation in Proposition (\ref{prop:Pogorelov-inverse}) implies that
    \begin{equation}
        \begin{aligned}
            \Psi(y, Ty+\alpha) &= \frac{1}{\sqrt{f}}\left(\left(||y||^2 - ||Ty+\alpha||^2 + 4, 4y\right), \left( ||Ty+\alpha||^2 - ||y||^2 + 4, 4(Ty+\alpha\right)\right)\\
            & =\frac{1}{\sqrt{f}}\left(\left(4 - ||\alpha||^2 - 2\alpha^TTy, 4y\right),\left(4 + ||\alpha||^2 + 2\alpha^TTy, 4(Ty+\alpha)\right)\right).
        \end{aligned}
    \end{equation}
    Let $x = \frac{1}{\sqrt{f}}(4 - ||\alpha||^2 - 2\alpha^TTy, 4y)$, then
    \begin{align}
        Ax &= \frac{1}{\left(4-\|\alpha\|^2\right)\sqrt{f}}\begin{pmatrix}
            4+\|\alpha\|^2 & 4\alpha^T T\\
            4\alpha & 2\alpha\alpha^T T+(4-\|\alpha\|^2)T
        \end{pmatrix}\begin{pmatrix}
            4 - ||\alpha||^2 - 2\alpha^TTy \\
            4y
        \end{pmatrix}\\
        &= \frac{1}{\sqrt{f}}\begin{pmatrix}
            4 + ||\alpha||^2 + 2\alpha^TTy \\
            4(Ty +\alpha)
        \end{pmatrix}.
    \end{align}
    Therefore, we have $\Psi(y, Ty+\alpha) = \left(x, Ax\right)$.
    Since the image of $\Psi$ is $\HH^n\times\HH^n$, we know $A$ preserves the chosen sheet of the hyperboloid model. Hence, $A\in\Isom(\HH^n)$.
        
    The uniqueness of $A$ follows from the same idea in the proof of the uniqueness part in Proposition \ref{prop:AtoB}.
    Finally, for any $y\in\R^n$ such that $(y, T^{-1}y -T^{-1}\alpha)\in\Omega$, let $y'=T^{-1}y -T^{-1}\alpha$; then $y=Ty'+\alpha$. By the symmetry of $\Psi$, there exists $x''\in\HH^n$, such that
    \begin{equation*}
        \Psi(y, T^{-1}y -T^{-1}\alpha) = \Psi(Ty'+\alpha,y') = (Ax'',x'') = (x', A^{-1}x'),
    \end{equation*}
    where $x'=Ax''$.
\end{proof}

\subsection{Star-shapedness and local convexity}

Recall that a subset $X$ of $\H^n$ or $\R^n$ is \it star-shaped \rm with center $v$ if for each $x \in X$, the geodesic segment joining $v$ and $x$ is contained in $X$. Since geodesics in $\mathcal H^n$ are intersections of 2-dimensional linear subspaces in $\R^{1,n}$ with $\mathcal H^n$, the restriction map $\PP(x_0, x_1,...,x_n)=(x_1, ..., x_n): \mathcal H^n \to \R^n$ sends geodesics through $e$ to lines in $\R^n$ through $0$,  and sends geodesic rays from $e$ to rays from $0$.  Another way to prove this is by noting that the central projection $\PP(x_0, ...,x_n)/x_0: \mathcal H^n \to \{x \in \R^n: ||x||<1\}$ is the canonical map from the hyperboloid model to the Klein model of $\H^n$.
In particular, $\PP$ maps a star-shaped set centered at $e$ to a star-shaped set centered at $0$, and $\PP$ induces a bijection between the set of all geodesic rays in $\mathcal H^n$ from $e$ to the set of all rays in $\R^n$ from $0$.  
In terms of explicit formulas, the geodesic ray $\gamma$ in $\mathcal H^n$ from $e=(1, 0,...,0)$ to $x \in \mathcal H^n-\{e\}$ is given by 
\begin{equation}
    \left\{ \frac{(1-t)e + tx}{\sqrt{-\lan (1-t)e+tx, (1-t)e+tx\ran}}: t \geq 0\right\},
\end{equation}
whose image $\PP(\gamma)$ is the ray $\R_{\geq 0} \PP(x):=\{ s \PP(x): s \geq 0\}$ in $\R^n$ from $0$. Recall that $\Phi(x,y)=(P_1(x,y), P_2(x, y))$.

\begin{lemma}\label{prop:star-shaped} Let $\gamma$ and $\gamma'$ be two distinct geodesic rays in $\mathcal H^n$ from $e$. Then,
\begin{enumerate}[label=(\roman*)]
    \item $P_1(\gamma \times \mathcal H^n) \subset \PP(\gamma)$, $P_2( \mathcal H^n \times \gamma) \subset \PP(\gamma)$, 
    and
    \item $\PP(\gamma) \cap \PP(\gamma') =\{0\}$.
\end{enumerate}

\end{lemma}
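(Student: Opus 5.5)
Both parts follow directly from the explicit formulas; no earlier result beyond the description of geodesic rays from $e$ is needed. By the displayed parametrization, a geodesic ray $\gamma$ from $e$ through a point $x\neq e$ satisfies $\PP(\gamma)=\R_{\geq 0}\PP(x)$. Indeed, for $t\ge 0$ the point $\PP\bigl(((1-t)e+tx)/\sqrt{\cdot}\bigr)=t\PP(x)/\sqrt{\cdot}$ is a non-negative multiple of $\PP(x)$. Moreover $\PP(x)\neq 0$, since $x\in\mathcal H^n$ and $x\neq e$ force $\|\PP(x)\|^2=x_0^2-1>0$.

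For (i), take $z\in\gamma$ and $y\in\mathcal H^n$. Then
$$P_1(z,y)=\frac{2\PP(z)}{-\lan z+y,e\ran}=\frac{2}{z_0+y_0}\,\PP(z).$$
Since $z_0,y_0\ge 1$, the scalar $2/(z_0+y_0)$ is positive. Because $\PP(z)\in\PP(\gamma)=\R_{\ge0}\PP(x)$, which is a ray (a cone closed under positive scaling), we get $P_1(z,y)\in\PP(\gamma)$. The claim $P_2(\mathcal H^n\times\gamma)\subset\PP(\gamma)$ follows from the same computation with the roles of the two factors exchanged, using the symmetric form of $P_2$.

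For (ii), suppose $\gamma$ passes through $x\neq e$ and $\gamma'$ through $x'\neq e$, and that the rays $\R_{\ge0}\PP(x)$ and $\R_{\ge0}\PP(x')$ share a nonzero point. Then $\PP(x')=c\,\PP(x)$ for some $c>0$. The points $e,x,x'$ then span a $2$-dimensional linear subspace of $\R^{1,n}$, because $x'-\lambda e$ is parallel to $x-\mu e$ for suitable scalars. Hence $\gamma$ and $\gamma'$ lie in the same geodesic through $e$ and start from the same point. Within the plane $\mathrm{span}(e,x)$, the two rays from $e$ are distinguished by the sign of the $\PP(x)$-component, and $c>0$ means both rays point the same way, so $\gamma=\gamma'$. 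This contradicts distinctness. Equivalently, one can invoke the bijection between geodesic rays from $e$ and rays from $0$ stated just before the lemma, which already contains (ii).

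The only point needing care is the positivity of the scalar $-\lan z+y,e\ran=z_0+y_0$, which holds on $\mathcal H^n$. That positivity is what makes the image a ray rather than a full line, so I do not expect any real obstacle.
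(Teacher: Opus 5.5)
Your proposal is correct and follows the paper's proof. Part (i) uses the same positivity of $-\lan z+y,e\ran=z_0+y_0$. Part (ii) rests on the same linear-algebra fact, that geodesics through $e$ are $2$-planes through $e$; you run it as a contrapositive (a shared nonzero point forces $x'\in\mathrm{span}(e,x)$ with the same orientation) instead of the paper's split into the linearly independent case and the opposite-rays case.
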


\begin{proof} \noindent\emph{(i)}  By the symmetry of the Pogorelov map $\Phi(x,y)=\left(P_1(x,y), P_2(x,y)\right)$ in terms of $x, y$, it suffices to prove the statement for $P_1$. Take any $y \in \mathcal H^n$, and $x \in \gamma -\{e\}$. By   definition,
   \begin{equation*}
       P_1(x,y)=\frac{2\PP(x)}{-\lan x+y,e\ran}=\left(-\frac{2}{\lan x+y,e\ran}\right)\PP(x) \in \PP(\gamma),
       \end{equation*} since $\lan x+y, e \ran <0$.

    \smallskip
    \noindent\emph{(ii)} Take $x \in \gamma -\{e\}$ and $y \in \gamma'-\{e\}$. Since $\gamma \neq \gamma'$, either three vectors $e, x, y$ are linearly independent in $\R^{1,n}$ when $\gamma$ and $\gamma'$ are not contained in a geodesic, or $\PP(x)=-k\PP(y)$ for some $k \in \R_{>0}$ when $\gamma$ and $\gamma'$ are contained in a geodesic. It follows that either $\PP(x)$ and $\PP(y)$ are linearly independent in $\R^n$, or $\PP(x) =-k\PP(y)$. In both cases, the two rays $\PP(\gamma)$ and $\PP(\gamma')$ intersect only at $0$. 
\end{proof}

We say that a subset $\Sigma$ of $\mathcal H ^n$ or $\R^n$ is \emph{radial with respect to $v$}, if each geodesic ray $\gamma$ from $v$ intersects $\Sigma$ in at most one point.

\begin{lemma}[\cite{Pogorelov}]\label{lem:star-shaped}
    Suppose $\Sigma \subset\mathcal H^n$  is a radial set with respect to $e=(1,0,..,0)$ and $e \notin \Sigma$, and $f \from \Sigma \to \mathcal H^n$ is a continuous map. Let $S=\{P_1(x,f(x)): x\in \Sigma\}$. Then 
    \begin{enumerate}[label=(\roman*)]
        \item $S$ is a radial set with respect to  $0$ in $\R^n$ with $0\notin S$, and
        \item the map $ P_1(x, f(x)): \Sigma \to S$  is bijective and continuous. 
    \end{enumerate}
\end{lemma}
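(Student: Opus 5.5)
The plan is to reduce everything to Lemma \ref{prop:star-shaped}, which says that $P_1$ sends pairs whose first coordinate lies on a geodesic ray $\gamma$ from $e$ into the Euclidean ray $\PP(\gamma)$, and that distinct rays from $e$ have images meeting only at $0$.

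\textbf{Step 1: $0\notin S$.} For $x\in\Sigma$ we have $x\neq e$, hence $\PP(x)\neq 0$. Indeed, $\PP(x)=0$ together with $x\in\mathcal H^n$ forces $x_0=1$, that is, $x=e$. Since $-\lan x+f(x),e\ran=x_0+f(x)_0>0$, we get
\[
P_1(x,f(x))=\frac{2}{x_0+f(x)_0}\,\PP(x),
\]
which is a positive multiple of $\PP(x)$. In particular it is nonzero.

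\textbf{Step 2: radiality of $S$ and injectivity.} Let $\rho$ be a ray in $\R^n$ from $0$. By the bijection between geodesic rays from $e$ and rays from $0$ induced by $\PP$, we have $\rho=\PP(\gamma)$ for a unique geodesic ray $\gamma$ from $e$. Suppose $P_1(x,f(x))\in\rho$. By Step 1 this point is a positive multiple of $\PP(x)$, so $\PP(x)\in\rho-\{0\}$. Let $\gamma_x$ be the ray from $e$ through $x$. Then $\PP(x)\in\PP(\gamma_x)\cap\PP(\gamma)$ is nonzero, so Lemma \ref{prop:star-shaped}(ii) gives $\gamma_x=\gamma$, that is, $x\in\gamma$. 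Because $\Sigma$ is radial with respect to $e$, there is at most one such $x$. Hence at most one $x\in\Sigma$ satisfies $P_1(x,f(x))\in\rho$, and so $\rho\cap S$ has at most one point. This proves (i).

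The same argument proves injectivity. If $P_1(x,f(x))=P_1(x',f(x'))$, both points lie on a common ray $\rho$, so $x$ and $x'$ lie on the same geodesic ray from $e$. Radiality of $\Sigma$ then forces $x=x'$.

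\textbf{Step 3: surjectivity and continuity.} Surjectivity onto $S$ holds by the definition of $S$. Continuity follows because $P_1$ is continuous on $\mathcal H^n\times\mathcal H^n$: its denominator satisfies $-\lan x+y,e\ran\ge 2>0$ there. So $x\mapsto P_1(x,f(x))$ is a composition of continuous maps.

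There is no real obstacle in this argument. The only point needing care is Step 2, where one must pass correctly from "same Euclidean ray" to "same hyperbolic ray". This uses the positivity of the scalar $2/(x_0+f(x)_0)$ together with Lemma \ref{prop:star-shaped}(ii).
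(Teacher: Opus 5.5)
Your proof is correct and takes essentially the same route as the paper. Both arguments get $0\notin S$ from $P_1(x,f(x))$ being a positive multiple of $\PP(x)\neq 0$, and both get radiality and injectivity from Lemma \ref{prop:star-shaped}, since points of $\Sigma$ on distinct geodesic rays from $e$ land on distinct Euclidean rays from $0$, combined with radiality of $\Sigma$. Your direct version, which fixes a ray $\rho$, is just the contrapositive of the paper's argument by contradiction, and it has the minor advantage of stating explicitly the positivity of the scalar $2/(x_0+f(x)_0)$, which the paper uses implicitly.
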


\begin{proof} \noindent\emph{(i)} Since  $P_1(x,y)=-2\PP(x)/\lan x+y,e\ran$,  $\PP^{-1}(0) = \{(t, 0,...,0)\in \R^{1,n}: t \in \R\}$ and $e \notin \Sigma$, we see that $0 \notin S$.  Now if $S$ is not radial with respect to $0$, by definition, there are two distinct points $x, y \in \Sigma$ such that $P_1(x, f(x))$ and $P_1(y, f(y))$ are in the same ray in $\R^n$ from $0$. Since $\Sigma$ is radial with respect to $e$, there are two distinct geodesic rays $\gamma$ and $\gamma'$ from $e$ such that $x \in\gamma$ and $y \in \gamma'$.  By Lemma \ref{prop:star-shaped}, we see that $P_1(\gamma \times \mathcal H^n)$ and $P_2(\gamma' \times \mathcal H^n)$ are in different rays in $\R^n$ from $0$. This contradicts that $P_1(x, f(x))$ and $P_1(y, f(y))$ are in the same ray. Therefore, $S$ is radial with respect to $0$.

\smallskip
\noindent\emph{(ii)} It is clear from Definition \ref{P_i} of $P_1$ and continuity of $f$ that $P_1(x, f(x))$ is continuous.  The injectivity of $P_1(x, f(x)): \Sigma  \to S$ follows from that argument above which shows $P_1(x, f(x))  \neq P_1(y, f(y))$ for $x \neq y$. 
\end{proof}

One of the important properties of the Pogorelov map is that it preserves the local convexity of surfaces.  Pogorelov showed that, in the  $3$-sphere $\mathbb S^3$, the map $\Phi$ preserves local convex radial surfaces containing $e$ in their convex sides.  
Pogorelov proved Theorem \ref{lem:local-convex} for smooth locally convex surfaces on pages 302-308 and general locally convex surfaces on pages 314-322 of \cite{Pogorelov}. See also Proposition 26, page 351 in Spivak's book~\cite{Spivak} for a proof for smooth surfaces.  
The proof presented in \cite{Pogorelov} works for the hyperbolic case, so we only summarize his statement below and skip the proof.

\begin{theorem}[Pogorelov] \label{lem:local-convex}
Suppose $X_1$ and $X_2$ are two convex 3-dimensional sets in $\H^3$ that contain $e$ in their interiors. Let $\Sigma_i$ be a surface contained in $\partial X_i$  and $f: \Sigma_1 \to \Sigma_2$ be an isometry.  Then 
\begin{enumerate}[label=(\roman*)]
    \item $S_1 =\{P_1(x, f(x)): x \in \Sigma_1\}$ and 
    $S_2 =\{P_2(x, f(x)): x \in \Sigma_1\}$ are two radial surfaces in $\R^3$ with respect to $0$.
    \item (local convexity) For $i=1,2$ and each point $p \in S_i$, there exists a convex body $Y_p$ containing $0$ such that $S_i \cap \partial Y_p$ contains a neighborhood of $p$ in $S_i$.
    \item There exists a local isometry $g: S_1 \to S_2$ such that $\Phi(x, f(x)) =(y, g(y))$ for all $x \in \Sigma_1$.
\end{enumerate}
\end{theorem}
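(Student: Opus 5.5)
Since $X_i$ is convex with $e$ in its interior, each geodesic ray from $e$ meets $\partial X_i$ exactly once, so $\Sigma_i\subset\partial X_i$ is radial with respect to $e$ and $e\notin\Sigma_i$. Part (i) then follows from Lemma \ref{lem:star-shaped} applied to $f$ and, symmetrically, to $f^{-1}$ with $P_2$. The maps $x\mapsto P_1(x,f(x))$ and $x\mapsto P_2(x,f(x))$ are continuous injections of a surface into a radial set in $\R^3$, and radial projection to the unit sphere gives the local inverse. Concretely, by Lemma \ref{prop:star-shaped} the image point lies on the ray $\R_{>0}\PP(x)$. In Klein-model terms, the direction of $P_1(x,f(x))$ is the direction of $x$ seen from $e$, and this direction is a homeomorphism of $\Sigma_1$ onto an open subset of $\mathbb S^2$. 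Hence $S_1$ is a graph of a continuous positive radial function over an open subset of $\mathbb S^2$, and so a surface. We then define $g: S_1\to S_2$ by $g(P_1(x,f(x)))=P_2(x,f(x))$. This makes $\Phi(x,f(x))=(y,g(y))$ hold by construction, and $g$ is a well-defined homeomorphism.

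For the local isometry in (iii), fix $x\in\Sigma_1$ and a small neighborhood $U$ of $x$ in which intrinsic distances are realized by shortest paths lying in $\Sigma_1$. Given $u,v\in U$, take a shortest path $\alpha$ in $\Sigma_1$ from $u$ to $v$. Then $\beta=f\circ\alpha$ is a unit-speed rectifiable curve of the same length in $\Sigma_2$. By Lemma \ref{lem:path}, the curves $P_1(\alpha,\beta)$ and $P_2(\alpha,\beta)$ have equal length; these are exactly a curve in $S_1$ and its $g$-image. Applying the same argument to curves on the $S_i$ by pulling back through the homeomorphisms, we find that $g$ and $g^{-1}$ preserve the lengths of rectifiable curves. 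Hence $g$ preserves induced intrinsic distances locally, and so is a local isometry. One care point is that rectifiability must transfer both ways. This holds because $\Phi$ is a diffeomorphism (Proposition \ref{prop:Pogorelov-inverse}) and so is locally bi-Lipschitz on compact sets.

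The main obstacle is (ii), local convexity of $S_i$. My plan is Pogorelov's route, done first in the smooth case and then by approximation. Near $p=P_1(x,f(x))$, I would approximate the convex sets $X_1,X_2$ by convex bodies with smooth strictly convex boundaries, together with approximating isometries. This is the delicate part, since isometric smooth approximations are not automatic; Pogorelov handles it by approximating by convex polyhedra with isometric boundaries via Alexandrov's theorem. For the smooth (or polyhedral) model, convexity can be checked locally. One route is to show that the second fundamental form of $S_1$ is a positive combination of those of $\Sigma_1$ and $\Sigma_2$ pulled back through $\Phi$, computed via the explicit formula for $\Phi$. The other, in the polyhedral case, is to show that each face maps to a planar piece (geodesic segments go to segments) and that dihedral angles stay convex, with $0$ on the inner side by radiality. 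Then I would pass to the limit: local convexity with $0$ inside a supporting convex body $Y_p$ is a closed condition under uniform convergence of the radial functions. Since $\Phi$ is continuous, the radial functions of the approximating images converge, which gives (ii).
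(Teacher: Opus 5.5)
Your parts (i) and (iii) are sound, and they match what the paper records right after the statement. $\Sigma_i$ is radial with respect to $e$. Lemma~\ref{lem:star-shaped}, applied to $f$ and to $f^{-1}$ via $P_2(x,y)=P_1(y,x)$, gives homeomorphisms $h_i$, one sets $g=h_2\circ f\circ h_1^{-1}$, and Lemma~\ref{lem:path} gives equality of lengths.

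One justification in (iii) needs replacing. For a rectifiable $\gamma\subset S_1$, rectifiability of $h_1^{-1}\circ\gamma$ does not follow from $\Phi$ being a diffeomorphism. The reason is that $h_1^{-1}(y)$ is the first component of $\Psi(y,g(y))$, so it involves the curve $g\circ\gamma$ you are trying to control. Use instead that $h_1^{-1}(y)$ is the point of $\Sigma_1$ in the direction $y/\|y\|$. The map $y\mapsto y/\|y\|$ is locally Lipschitz on $S_1$, which avoids $0$, and the radial parametrization of $\partial X_1$ in the Klein model is locally Lipschitz.

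The genuine gap is in (ii), at the step you yourself call delicate. You propose to approximate $X_1,X_2$ by smooth or polyhedral convex bodies with isometric boundaries, obtained from Alexandrov's theorem and converging to $X_1$ and $X_2$ respectively. Alexandrov's theorem realizes an approximating polyhedral metric, but it gives no control over which convex surface the realizations converge to. Moreover, for closed surfaces the realization is unique up to congruence (Cauchy--Alexandrov, also valid in $\H^3$). So isometric polyhedra $P_1^n\to X_1$ and $P_2^n\to X_2$ would be congruent, and passing to the limit would make $X_1$ congruent to $X_2$. The approximation you need for a prescribed pair is therefore as strong as the rigidity theorem the Pogorelov map is built to prove, and it cannot serve as an input.

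There are further obstructions:
\begin{itemize}
\item In the statement, $f$ is only an isometry between subsurfaces $\Sigma_i$, which are in general not congruent.
\item In the paper's application, the $X_i$ are non-compact, so there is no closed surface to realize at all.
\item A local approximation of the triple $(\Sigma_1,\Sigma_2,f)$ by isometric smooth or polyhedral pieces converging to the given pieces would need a different tool, and you do not supply one.
\item A smaller, fixable point: for isometric polyhedral pieces you must pass to a common refinement, since edges are not intrinsic and $f$ need not map faces to faces.
\end{itemize}

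Your smooth-case computation and your limiting step are reasonable. For the latter, the gauges are convex on a fixed open convex cone by Lemma~\ref{local2gloab}, and this is preserved under uniform limits. But without valid approximants, the general case of (ii) is not established. The paper does not reprove (ii) either. It relies on Pogorelov's separate treatment of general convex surfaces (pp.~314--322 of \cite{Pogorelov}), and that argument is exactly what your proposal would need to supply or replace.
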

Note that since $X_i$ contains $e$ in its interior and $\Sigma_i \subset \partial X_i$, $\Sigma_i$ is radial with respect to $e$. 
Let $h_i(x) =P_i(x, f(x)): \Sigma_i \to S_i$ be the homeomorphism produced from Lemma \ref{lem:star-shaped} for $i=1,2$. Then the local isometry $g$ in Theorem \ref{lem:local-convex} (iii) is the homeomorphism $h_2 \circ f \circ h_1^{-1}.$  By Lemma \ref{local+homeo=global}, we have

\begin{corollary}\label{c:isometry} The map $g: S_1 \to S_2$ in Theorem \ref{lem:local-convex} is an isometry.   \end{corollary}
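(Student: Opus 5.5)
The argument will apply Lemma \ref{local+homeo=global} to the map $g\colon S_1\to S_2$, with $S_1$ and $S_2$ carrying their intrinsic path metrics. By Theorem \ref{lem:local-convex}(iii), $g$ is already a local isometry. So it remains to check two things: that $g$ is a homeomorphism, and that $S_1,S_2$ are path metric spaces.

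For the homeomorphism property, I will use the description $g=h_2\circ f\circ h_1^{-1}$, where $h_1(x)=P_1(x,f(x))$ on $\Sigma_1$ and $h_2(y)=P_2(f^{-1}(y),y)$ on $\Sigma_2$. I first check that this agrees with Theorem \ref{lem:local-convex}(iii). Given $x\in\Sigma_1$, set $y=h_1(x)$. Then $\Phi(x,f(x))=(h_1(x),\,P_2(x,f(x)))=(y,\,h_2(f(x)))$, so $g(y)=h_2 f h_1^{-1}(y)$.

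Next, $f$ is an isometry, hence a homeomorphism. Lemma \ref{lem:star-shaped} (applied to $\Sigma_1$ with $f$, and by the symmetry of $\Phi$ to $\Sigma_2$ with $f^{-1}$) shows that $h_1$ and $h_2$ are continuous bijections onto $S_1$ and $S_2$. It remains to show that $h_i^{-1}$ is continuous. For this I will use Proposition \ref{prop:Pogorelov-inverse}: $\Phi|_{\mathcal H^3\times\mathcal H^3}$ is a diffeomorphism onto $\Omega$ with inverse $\Psi$. Consider the map $x\mapsto (x,f(x))$ from $\Sigma_1$ onto the graph $\Gamma=\{(x,f(x))\}$; it is a homeomorphism. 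Then $\Phi$ maps $\Gamma$ homeomorphically onto $\Phi(\Gamma)=\{(y,g(y)):y\in S_1\}$, which is the graph of $g$. Projecting to the first factor sends $\Phi(\Gamma)$ bijectively onto $S_1$, so $h_1=\mathrm{pr}_1\circ\Phi\circ(\mathrm{id},f)$.

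The inverse of $h_1$ is $y\mapsto \mathrm{pr}_1\Psi(y,g(y))$, which is continuous exactly when $g$ is continuous. But $g$ is continuous because it is a local isometry. Hence $h_1^{-1}$ is continuous. The same argument applies to $h_2^{-1}$, via $y'\mapsto \mathrm{pr}_2\Psi(g^{-1}(y'),y')$. This needs $g^{-1}$ to be well defined and continuous, so I will also invoke the symmetric version of Theorem \ref{lem:local-convex} with the roles of $\Sigma_1$, $\Sigma_2$ (and $f$, $f^{-1}$) swapped, which makes $g^{-1}$ a local isometry. Therefore $g$ is a homeomorphism.

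Finally, $S_i$ with its intrinsic metric is a path metric space by definition. Lemma \ref{local+homeo=global} then gives that $g$ is an isometry.

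The main obstacle is the continuity of the inverses $h_i^{-1}$. It is not automatic, since $S_i$ need not be compact. The approach above handles it through the diffeomorphism $\Psi$ together with the continuity of $g$ and of $g^{-1}$, the latter obtained from the symmetric application of Theorem \ref{lem:local-convex}.
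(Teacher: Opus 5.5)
Your proposal is correct and follows the paper's route. The paper also writes $g=h_2\circ f\circ h_1^{-1}$ and applies Lemma \ref{local+homeo=global}, but it simply calls the maps $h_i$ from Lemma \ref{lem:star-shaped} homeomorphisms, even though that lemma only gives continuous bijections; you actually address the continuity of the inverses. That said, your detour through $\Psi$ is unnecessary: once the symmetric application of Theorem \ref{lem:local-convex} shows that $g^{-1}$ is a local isometry, $g$ is a bijection with $g$ and $g^{-1}$ both continuous in the intrinsic topologies, which is exactly the homeomorphism hypothesis of Lemma \ref{local+homeo=global}. (Alternatively, invariance of domain upgrades the continuous bijections $h_i$ between surfaces without boundary to homeomorphisms, which is presumably what the paper tacitly uses.)
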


\smallskip

\section{Two extension results}\label{sec:extension}

In this section, we prepare several results that will be used in the proof of Theorem \ref{main2}. 
Recall that a function $f$ defined on an open set $U$ in $\R^n$ is \textit{locally convex} if for each $p\in U$, there exists a convex neighborhood $N_p$ of $p$ in $U$ such that $f|_{N_p}$ is convex.  A well-known lemma about local convexity is the following. We omit the proof. 

\begin{lemma}\label{local2gloab} 
    If $U \subset \R^n$ is a convex open set and $f: U \to \R$ is locally convex, then $f$ is a convex function on $U$. 
\end{lemma}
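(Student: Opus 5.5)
The plan is to reduce to convexity along line segments and then use a connectedness argument along each segment. By definition, $f$ is convex on $U$ exactly when, for any $a,b \in U$, the one-variable function $\varphi(t) = f((1-t)a + tb)$ is convex on $[0,1]$. Because $U$ is convex, the closed segment $[a,b]$ lies in $U$, so this is well defined. Moreover, for each $p \in [a,b]$ the convex neighborhood $N_p$ meets the line through $a,b$ in an interval. On that interval $f$ is convex, so $\varphi$ is convex on a relatively open subinterval of $[0,1]$ containing the parameter of $p$. This reduces the statement to a one-dimensional claim: a function on $[0,1]$ that is convex near every point is convex.

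To prove the one-dimensional claim, first observe that $\varphi$ is continuous. A convex function is continuous on the interior of its domain, and at the endpoints $0,1$ we still have local convexity on a half-open interval. I will avoid relying on continuity at the endpoints by working on $[0,1]$ with local convexity in relatively open intervals. Use compactness of $[0,1]$ and the Lebesgue number lemma to choose a partition $0=t_0<t_1<\dots<t_m=1$ such that each $[t_{i-1},t_{i+1}]$ lies inside a single interval on which $\varphi$ is convex. Then the one-sided derivatives of $\varphi$ exist on $(0,1)$ and are nondecreasing on each such overlapping window. Since consecutive windows overlap on $[t_i,t_{i+1}]$, which has nonempty interior, monotonicity of the right derivative $\varphi'_+$ propagates to all of $(0,1)$.

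It remains to recover convexity from the monotone right derivative. A function that is continuous on $(0,1)$, locally Lipschitz, and has nondecreasing right derivative is convex, because $\varphi(s)-\varphi(r)=\int_r^s \varphi'_+$. The endpoint inequality then holds as well. For example, convexity of $\varphi$ on $[0,\epsilon)$ together with convexity on $(0,1)$ gives the chord inequality on $[0,1]$: take limits of chords, or apply the same overlap argument directly to the chord-slope characterization.

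A cleaner alternative avoids derivatives. Suppose $\varphi$ is not convex. Then there exist $r<s<u$ with $\varphi(s)$ above the chord from $r$ to $u$. Let $\psi = \varphi - \ell$, where $\ell$ is that chord. Then $\psi$ attains a positive maximum on $[r,u]$ at an interior point, and there is a leftmost maximizer $s_0$. But $\psi$ is convex near $s_0$, and a convex function with a local maximum at an interior point of its interval is constant near that point. This contradicts $s_0$ being the leftmost maximizer, unless $s_0 = r$, which is impossible since $\psi(r)=0$. For this argument I need $\psi$ to be upper semicontinuous so that the maximum is attained. That holds on the interior by convexity, and at $r,u$ we have $\psi=0$. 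This maximum-principle route is the one I would write up. The main obstacle is justifying attainment of the maximum near the endpoints, and that is handled by local convexity, which gives upper boundedness on a neighborhood.
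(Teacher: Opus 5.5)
The paper states this lemma as well known and omits its proof, so there is no argument in the paper to compare with. Your reduction to $\varphi(t)=f((1-t)a+tb)$ followed by the maximum-principle argument is a correct, standard proof. The key steps are: take the leftmost maximizer $s_0$ of $\psi=\varphi-\ell$, and note that a convex function with an interior local maximum is locally constant, since $\psi(s_0)\le\tfrac12\bigl(\psi(s_0-h)+\psi(s_0+h)\bigr)$.

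Two small remarks.

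First, your stated justification for attainment of the maximum is not quite right. Knowing $\psi(r)=\psi(u)=0$ and that $\psi$ is bounded above near the endpoints does not by itself force the supremum to be attained. What you need is upper semicontinuity at $r$ and $u$. This does follow from convexity on a half-open interval $[r,r+\epsilon)$: from $\psi(t)\le \frac{t'-t}{t'-r}\psi(r)+\frac{t-r}{t'-r}\psi(t')$, letting $t\to r^+$ gives $\limsup_{t\to r^+}\psi(t)\le\psi(r)$. Upper semicontinuity on all of $[r,u]$ is also what makes the set of maximizers closed, so that a leftmost maximizer exists.

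Second, all endpoint issues disappear if you use that $U$ is open. There is $\epsilon>0$ with $a-\epsilon(b-a)$ and $b+\epsilon(b-a)$ in $U$, so by convexity of $U$ the function $\varphi$ is defined and locally convex on $(-\epsilon,1+\epsilon)$. Hence $\varphi$ is continuous on $[0,1]$ and the maximum is attained trivially. Your derivative-based route also works, but the maximum-principle version is shorter and avoids the absolute-continuity step.
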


We now recall a theorem from \cite{TT10}, which ensures the extension of a locally convex function defined outside a closed set of vanishing Hausdorff measure. For any positive integer $m$, we denote by $\mathfrak H^m$ the $m$-dimensional Hausdorff measure in a metric space.

\begin{theorem}[{\cite[Corollary 4.2]{TT10}}]\label{thm:Tabor}
    Let $U$ be an open subset of $\R^n$ and $A$ be a closed subset of $U$ such that $\mathfrak H^{n-1}(A)=0$. If $f\from U- A\to\R$ is locally convex, then $f$ can be extended to a locally convex function on $U$. 
\end{theorem}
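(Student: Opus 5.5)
Throughout, call a segment \emph{admissible} if it misses $A$. The plan is to reduce to a local statement near points of $A$, prove that $f$ satisfies the convexity inequality along almost every segment, derive a uniform Lipschitz bound, and extend by continuity.

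\textbf{Reduction to a ball.} Any extension is unique, because $A$ has empty interior and a locally convex function is continuous. So it suffices to show: every $p\in A$ has a ball $B\subset U$ around it such that $f|_{B-A}$ extends to a convex function on $B$. Extensions on overlapping balls agree on the dense set of the overlap off $A$, hence agree, and they glue to a locally convex extension on $U$.

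\textbf{Generic segments.} Fix a closed ball $B$ with $2B\subset U$, and work in $B$.

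\begin{itemize}
\item \emph{Parallel lines.} For a unit vector $\theta$, the orthogonal projection $\pi_\theta$ onto $\theta^\perp$ is $1$-Lipschitz. Hence $\mathfrak H^{n-1}(\pi_\theta(A))=0$, so almost every line parallel to $\theta$ misses $A$.
\item \emph{Radial lines.} Fix $x\in B-A$. On each shell $\{r\le|z-x|\le R\}$ the radial projection to the unit sphere at $x$ is Lipschitz. So the set of directions from $x$ that hit $A$ is a countable union of $\mathfrak H^{n-1}$-null sets, hence null.
\item \emph{Convexity on admissible segments.} On an admissible segment, $f$ is a locally convex function of one variable on an interval, hence convex by Lemma~\ref{local2gloab}.
\end{itemize}

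Consequently, for every $x\in B-A$ and $\mathfrak H^{n-1}$-almost every direction $\theta$, the function $f$ is convex along the chord of $B$ through $x$ in direction $\theta$. In particular $B-A$ is path connected, since $x$ can be joined to almost every point of $B$ by an admissible segment.

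\textbf{Local bounds and Lipschitz estimate.} Fix a small ball $B'$ concentric with $B$, and choose the vertices $v_0,\dots,v_n$ of a simplex containing $\overline{B'}$ in its interior, all lying in $B-A$.

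\begin{itemize}
\item \emph{Upper bound, first attempt.} A point $z\in B'-A$ is $z=\sum\lambda_iv_i$. I want $f(z)\le\max_i f(v_i)$. To get it I will perturb the vertices slightly, using the radial-generic property at each stage of the iterated convex combination, so that every intermediate segment is admissible. Then continuity of $f$ on $B-A$ passes the inequality to the original point $z$.
\item \emph{Fallback if the perturbation bookkeeping is unclear.} I would instead take the upper bound $M=\sup_{\partial B''}f$ for an intermediate sphere $\partial B''$ chosen so that $\partial B''\cap A$ is small. Then for every $z\in B'-A$, almost every chord of $B''$ through $z$ is admissible, which gives $f(z)\le M$.
\item \emph{Lower bound.} Since $f$ is convex on the admissible chord through $z$ and through a fixed base point, $f\ge 2f(\text{base})-M$ on $B'-A$.
\item \emph{Lipschitz estimate.} The standard one-variable estimate on admissible chords gives $|f(z)-f(w)|\le L|z-w|$ for $z,w\in B'-A$ joined by an admissible segment, with $L$ depending only on $M$, the lower bound and the radii. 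For arbitrary $z,w\in B'-A$, approximate $w$ by points $w_k\to w$ such that $[z,w_k]$ is admissible, and use continuity at $w$. This gives the estimate for all pairs.
\end{itemize}

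\textbf{Extension.} Since $f$ is uniformly Lipschitz on the dense set $B'-A$, it extends continuously to $\overline{B'}$. For arbitrary $x,y\in B'$ and $t\in[0,1]$, approximate $x,y$ by points $x_k,y_k\in B'-A$ with $[x_k,y_k]$ admissible. Passing to the limit in the convexity inequality gives convexity of the extension on $B'$, which finishes the local statement and hence the theorem.

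The step I expect to be the main obstacle is the upper bound near $A$. The convexity information lives only on admissible segments, so it must be converted into a pointwise bound on all of $B'-A$. This is the one place where $\mathfrak H^{n-1}(A)=0$, rather than merely $A$ having empty interior, is genuinely needed.
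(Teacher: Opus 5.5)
The paper gives no proof of this statement; it is quoted from \cite{TT10}, so your argument has to stand on its own, and it essentially does. The only property of $A$ you use is that from every point off $A$ almost every line misses $A$ locally. This is a strong form of the ``intervally thin'' condition of \cite{TT10}. Your skeleton is sound: reduction to balls, local upper bound, lower bound, Lipschitz bound, continuous extension, and convexity via admissible approximating segments.

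\textbf{Drop the fallback upper bound.} For $n\ge 3$, the case the paper needs, it is circular. In general you cannot choose $\partial B''$ disjoint from $A$: a radial segment in $\R^3$ has $\mathfrak H^2=0$ yet meets every sphere in a range of radii. (Only for $n=2$ does the $1$-Lipschitz map $z\mapsto|z-c|$ give a.e.\ circle missing $A$.) Nothing a priori prevents $\sup_{\partial B''-A}f=+\infty$ near $A\cap\partial B''$, and that is exactly what you are trying to rule out.

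\textbf{Your first attempt works, and the bookkeeping is short.} For $z\in B'-A$ the barycentric coordinates satisfy $\lambda_i\ge c_0>0$ uniformly.
\begin{enumerate}
\item Choose $v_0'$ near $v_0$ so that the line through $z,v_0'$ is admissible. The bad $v_0'$ lie in a cone at $z$ over an $\mathfrak H^{n-1}$-null set of directions, so they form a Lebesgue-null set.
\item Set $z_1'=(z-\lambda_0v_0')/(1-\lambda_0)$, which lies on that line.
\item Choose $v_1'$ near $v_1$ generic from $z_1'$, and set $z_2'=(z_1'-\mu_1v_1')/(1-\mu_1)$ with $\mu_1=\lambda_1/(1-\lambda_0)$.
\item Continue in the same way, ending with $v_n':=z_n'$.
\end{enumerate}
Each $z_k'$ lies on the admissible segment $[v_k',z_{k+1}']$, so $f(z)\le\max_i f(v_i')$. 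Since $1-\mu_k\ge\lambda_n\ge c_0$, the displacements $|v_i'-v_i|$ are controlled uniformly in $z$. Continuity of $f$ at the $v_i$ then gives $f(z)\le\max_i f(v_i)+1$, and no limit in $z$ is needed.

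\textbf{Two places silently use a specific line that need not be admissible.} The fix in both is the same generic-perturbation device.
\begin{itemize}
\item In the lower bound, the line through $z$ and the fixed base point $c$ need not be admissible. Replace $c$ by a generic $c'$ near $c$ and use $f(z)\ge 2f(c')-f(2c'-z)\ge 2f(c)-2-M$ on a smaller ball.
\item In the Lipschitz step, the one-variable slope estimate needs the whole chord through $z,w$ of the larger ball (on which $m\le f\le M$) to be admissible, not just $[z,w]$. Your approximation $w_k\to w$ should be chosen to guarantee this.
\end{itemize}

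\textbf{Your closing remark is inaccurate.} $\mathfrak H^{n-1}(A)=0$ is also essential in the final convexity step, not only for the upper bound. For $A=\{x_n=0\}$, the function $-|x_n|$ on $\R^n-A$ is locally convex, locally bounded and Lipschitz, and extends continuously, but not convexly.
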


The next lemma is an elementary result about Hausdorff measures.

\begin{lemma}\label{lem:Hausdorff}
    Let $R\from \R^3-\{ 0\} \to \mathbb S^2$ be the projection given by $R(x)=x/\|x\|$. Suppose $A\subset \mathbb S^2$ such that $\mathfrak H^1(A)=0$. Then $     \mathfrak H^2(R^{-1}(A))=0$.
\end{lemma}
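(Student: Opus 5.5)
The approach is a covering argument. We use the fact that $R$ is locally Lipschitz away from the origin, and write $R^{-1}(A)$ as a countable union of pieces, each of which is a bounded-radius cone over a small set on the sphere.

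First, write
\[
R^{-1}(A)=\bigcup_{k\ge 1} C_k, \qquad C_k=\{x\in\R^3: x/\|x\|\in A,\ 1/k\le \|x\|\le k\}.
\]
By countable subadditivity of $\mathfrak H^2$, it suffices to show $\mathfrak H^2(C_k)=0$ for each $k$.

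Fix $k$ and $\epsilon>0$. Since $\mathfrak H^1(A)=0$, we can cover $A$ by countably many sets $E_j\subset\mathbb S^2$ with $\operatorname{diam} E_j=r_j<\delta$ and $\sum_j r_j<\epsilon$; here $\delta>0$ is small. Each $E_j$ lies in a spherical cap of radius about $r_j$. The portion of $C_k$ lying over $E_j$ is contained in the truncated cone
\[
K_j=\{t u: u\in E_j,\ 1/k\le t\le k\}.
\]
This is a region of length at most $k$ in the radial direction and of transverse diameter at most $k r_j$, up to a constant.

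Next, subdivide the radial interval $[1/k,k]$ into $N_j\approx k/(k r_j)=1/r_j$ subintervals of length about $r_j$. Each resulting piece of $K_j$ has Euclidean diameter at most $c\,k\, r_j$ for an absolute constant $c$. Hence $K_j$ is covered by about $1/r_j$ sets of diameter at most $c k r_j$. Their contribution to the $\mathfrak H^2_{ck\delta}$ premeasure is therefore at most
\[
\frac{1}{r_j}\,(c k r_j)^2 = c^2k^2 r_j .
\]
Summing over $j$ gives
\[
\mathfrak H^2_{ck\delta}(C_k)\le c^2k^2\sum_j r_j<c^2k^2\epsilon .
\]
Letting $\epsilon\to0$, and then $\delta\to 0$, gives $\mathfrak H^2(C_k)=0$.

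The only mildly delicate step is the bookkeeping: checking that the number of radial subdivisions (about $1/r_j$) times the squared diameter (about $r_j^2$) is linear in $r_j$. To keep it clean, I would round up the number of subintervals to $\lceil 1/r_j\rceil\le 2/r_j$ for $r_j<1$. Equivalently, one can phrase the argument abstractly: $C_k$ is a bi-Lipschitz image of $A\times[1/k,k]$ under $(u,t)\mapsto tu$. Then $\mathfrak H^2(A\times I)\le C\,\mathfrak H^1(A)\,\mathfrak H^1(I)=0$ by the standard product inequality for Hausdorff measures, and Lipschitz maps preserve null sets.
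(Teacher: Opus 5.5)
Your argument is correct and is essentially the paper's. You use the same decomposition into shells $1/k\le\|x\|\le k$, and your explicit count (about $1/r_j$ radial pieces of diameter $O(kr_j)$, each cone piece contributing $O(k^2 r_j)$) proves by hand the product estimate that the paper invokes via the bi-Lipschitz map $x\mapsto (x/\|x\|,\|x\|)$ onto $A\times I_k$, exactly as in your closing remark. Two cosmetic points do not affect the linear-in-$r_j$ bound: covering sets with $r_j=0$ need $r_j$ replaced by $\max(r_j,\epsilon 2^{-j})$, and with about $1/r_j$ subintervals their length is about $kr_j$ rather than $r_j$.
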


\begin{proof}
    Let $B=R^{-1}(A)$ and $B_k=B\cap\{x\in\R^3:1/k\leq\|x\|\leq k\}$, for $k\in\Z_+$. Then $B=\cup_{k=1}^\infty B_k$, and it suffices to show that $\mathfrak H^2(B_k)=0$.

    For $k\in\Z_+$, let $I_k=[1/k,k]$. Then $h\from B_k\to A\times I_k$ sending $x$ to $(x/\|x\|,\|x\|)$ is a diffeomorphism. In particular, $h$ is bi-Lipschitz with respect to the product metric on $A\times I_k\subset \mathbb S^2\times I_k$. By $\mathfrak H^1(A)=0$ and $\mathfrak H^1(I_k)<\infty$, we have $\mathfrak H^2(B_k)=0$.
\end{proof}

Another result we need is the following: the path metric of a convex surface does not change if a set of vanishing 1-dimensional Hausdorff measure is removed.  
Recall that a complete convex surface in $\R^3$ is a convex surface whose intrinsic path metric is complete. 

\begin{theorem}\label{thm:path metric}
    Suppose $S\subset \R^3$ is a complete convex surface and $A\subset S$ is a compact subset such that $\mathfrak H^1(A)=0$. Let $d_S$ and $d_{S-A}$ be the path metrics on $S$ and $S-A$, respectively. Then for any $x,y\in S-A$,
    \begin{equation}
        d_S(x,y)=d_{S-A}(x,y).
    \end{equation}
\end{theorem}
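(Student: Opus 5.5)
The inequality $d_S(x,y)\le d_{S-A}(x,y)$ is immediate, since every path in $S-A$ is a path in $S$ of the same length. The work is the reverse inequality. Fix $x,y\in S-A$ and $\epsilon>0$. I will produce a path in $S-A$ from $x$ to $y$ of length at most $d_S(x,y)+\epsilon$. By Alexandrov's theorem there is a shortest path $\gamma$ in $S$ from $x$ to $y$. The plan is to modify $\gamma$ near the compact set $A$, which has zero length, using small detours that cost arbitrarily little.

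\textbf{Reduction to a local problem.} The set $K=\gamma([0,L])\cup A$ is compact. On a convex surface, the intrinsic metric is locally bi-Lipschitz to the Euclidean metric. Concretely, each point $p\in S$ has a neighborhood that projects orthogonally and bi-Lipschitzly onto a plane domain, namely the graph of a convex function over a disk in a supporting-type plane. Cover $K$ by finitely many such charts with a uniform bi-Lipschitz constant $\Lambda$. Let $\delta>0$ be smaller than the Lebesgue number of this cover. The condition $\mathfrak H^1(A)=0$ is preserved under Lipschitz maps, so its image in each chart still has zero length. Since $x,y\notin A$ and $A$ is closed, $\gamma$ stays out of $A$ near its endpoints.

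\textbf{Covering $A$ by small disks.} Because $\mathfrak H^1(A)=0$ and $A$ is compact, $A$ can be covered by finitely many intrinsic open balls $B_{p_j}(r_j)$ with $\sum_j r_j<\epsilon/(10\pi\Lambda^2)$ and every $r_j<\delta$. I would like to treat the union $U=\bigcup_j B_{p_j}(r_j)$ as an obstacle. The boundary of each connected component of $U$ lies in a union of circles whose total length is at most $\sum_j 2\pi\Lambda^2 r_j<\epsilon/5$. To make this rigorous, I would work in the plane chart: replace each ball by a Euclidean disk of radius $\Lambda r_j$ in the chart, whose boundary lifts to a curve on $S$ of length at most $2\pi\Lambda^2 r_j$. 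I would also shrink the disks so that they avoid $x$ and $y$.

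\textbf{Rerouting.} Let $W$ be a connected component of the union of the closed chart-disks. Its topological boundary $\partial W$ lies in a union of circles. That union is connected, because it is the boundary of a connected union of disks lying inside a simply connected chart, which holds since $\operatorname{diam} W<\delta$ is small. The boundary $\partial W$ avoids $A$, because $A$ lies in the open disks. Let $s,t$ be the first and last times at which $\gamma$ meets $\overline W$. Replace $\gamma|_{[s,t]}$ by a path along $\partial W$ from $\gamma(s)$ to $\gamma(t)$, which costs at most the total length of the circles forming $\partial W$. Processing the finitely many components one after another, each surgery only deletes portions of $\gamma$ and adds boundary arcs, so the disk circumferences are counted at most once. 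The resulting path lies in $S-A$ and has length at most $L+\epsilon$. Letting $\epsilon\to0$ gives $d_{S-A}(x,y)\le d_S(x,y)$.

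\textbf{Main obstacle.} The delicate step is the planar topology: checking that the boundary of a finite union of disks inside a chart is a connected, rectifiable curve system avoiding $A$ that joins the entry and exit points. Handling disks that overlap different charts adds to this. To deal with it, I would first cover $A$ by finitely many pieces of diameter less than $\delta/2$. Each piece then sits inside a single chart, and I would perform the disk argument chart by chart.
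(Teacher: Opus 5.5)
\textbf{Genuine gap in the rerouting step.} Your reduction, the finite cover of $A$ by balls of small total radius, and the circumference bound from bi-Lipschitz charts are all fine, but the rerouting step fails as written, for two reasons.

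First, the topological boundary of a connected finite union of closed disks need not be connected. A ring of small disks arranged around a point has an inner and an outer boundary component, so $\partial W$ may contain no path from $\gamma(s)$ to $\gamma(t)$. Your justification conflates $\partial W$ with the union of the circles. That union can be connected, but it runs through the interior of $W$ and hence possibly through $A$. The step can be rescued by routing along the boundary of the complementary component of $W$ that contains the rest of the path. That requires ensuring $x,y$ are not enclosed, and showing that this boundary contains a path of length at most the total circumference. You have not supplied either.

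Second, the chart-by-chart processing does not control $A$. When you reroute along $\partial W$ for the disks of one chart, the new arcs avoid only the part of $A$ covered by those disks. They may pass through points of $A$ that were removed while processing another chart, so the final path need not lie in $S-A$. Splitting $A$ into pieces of small diameter does not prevent this, since distinct pieces can be arbitrarily close to each other.

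\textbf{The missing idea.} What removes both problems, and what the paper does, is to choose each radius so that the circle itself misses $A$. Since $q\mapsto d_S(p_n,q)$ is $1$-Lipschitz, its image of $A$ has zero length. So for almost every $r$ the metric circle $\partial\hat B_{p_n}(r)$ is disjoint from $A$, and you can enlarge each $r_n$ slightly to such a value while keeping $\sum r_n$ small.

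Then reroute one ball at a time. Replace the portion of the current path between its first and last visits to $\hat B_{p_n}(r_n)$ by an arc of the single simple closed curve $\partial\hat B_{p_n}(r_n)$. This needs no topology of unions of disks, and each step costs at most one circumference. Every added arc avoids all of $A$, so no point of $A$ is ever reintroduced. Right after step $n$ the path misses $B_{p_n}(r_n)$. Since these balls cover $A$, any point of $A$ on the final path would have to lie on a retained piece of $\gamma$, which is impossible.

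The paper bounds the circumference by $2\pi r_n$ using nonnegative curvature and Toponogov comparison. Your bi-Lipschitz charts would serve equally well for that bound, with lifted chart circles chosen to miss $A$ by the same almost-every-radius argument.
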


\begin{proof} 
    By definition, it is clear that $d_S(x,y) \leq d_{S-A}(x,y)$. Therefore, it suffices to show that for any $\epsilon >0$ and $x,y\in S-A$, we can construct a path $\tilde{\gamma}$ in $S-A$ joining $x$ and $y$ such that the length of $\tilde{\gamma}$ is at most $L+\epsilon$, where $L=d_S(x,y)$.
    Let $\gamma\from [0,1]\to S$ be a shortest geodesic with $\gamma(0)=x$ and $\gamma(1)=y$. Note that $\gamma$ is injective due to the shortest distance property.

    Since $A$ is compact, there exists $\delta_1>0$ such that 
    \begin{equation}
        N_{\delta_1}(A)=\{p\in S:d_S(p,A)\leq\delta_1\}
    \end{equation}  
    is compact in $S$ and does not contain $x$ or $y$.
    Then there exists $\delta_2>0$ such that for any  
     $p \in N_{\delta_1}(A)$   and $\delta\in(0,\delta_2]$, 
     \begin{enumerate}[label=(\roman*)]
         \item the open ball $B_p(\delta):=\{q\in S:d_S(p,q)<\delta\}$ is homeomorphic to an open disk;
         \item $B_p(\delta)$ does not contain $x$ or $y$; and
         \item the set $\partial \hat B_p(\delta):=\{q\in S:d_S(p,q)=\delta\}$ is a simple closed loop in $S$.
     \end{enumerate}
    See \cite{BGP} for details. Let $\hat{B}_p(\delta):=\{q\in S:d_S(p,q)\leq\delta\}$
    be the closed ball.

    For any $\delta\in(0,\delta_2]$, by $\mathfrak H^1(A)=0$, there exists $p_n \in S$  and $r_n>0$, $n=1,2,\cdots,$ such that $A\subset \bigcup_{n=1}^\infty B_{p_n}(r_n)$ and $\sum_{n=1}^\infty r_n<\delta$. 
    By the compactness of $A$,  we find finitely many balls $B_{p_1}(r_1), ..., B_{p_m}(r_m)$ covering $A$.
   Note that, by construction,  $x,y\notin B_{p_n}(r_n)$, for $1\leq n\leq m$.

    For each $p_n$, consider a map $f_n\from A\to \R$ defined by $f_n(q)=d_S(p_n,q)$. By the triangle inequality, $f_n$ is Lipschitz. In particular, it implies that $\mathfrak H^1(f_n(A))=0$. Hence, for almost all $r \in \R_{>0}$, $\partial \hat B_{p_n}(r)$ contains no points in $A$. For $n=1,2,...,m$,  we increase $r_n$ a little such that $\partial \hat B_{p_n}(r_n)$ is disjoint from  $A$, and $\sum_{n=1}^\infty r_n<\delta$ still holds.

    Now we inductively modify the curve $\gamma$ to make it disjoint from $A$ as follows. For $1\leq n\leq m$, let $\gamma_n:[0,1] \to S$ be the inductively constructed curve. 
    Set $\gamma_0=\gamma$, and assume that we have constructed $\gamma_{n-1}$. In the $n$-th step, if $\gamma_{n-1}\cap\hat{B}_{p_n}(r_n)=\varnothing$, define $\gamma_n=\gamma_{n-1}$. 
    Otherwise, let 
    \begin{equation}
        a_{n-1}=\min\{t\in[0,1]:\gamma_{n-1}(t)\in \hat{B}_{p_n}(r_n)\}
    \end{equation}
    and
    \begin{equation}
        b_{n-1}=\max\{t\in[0,1]:\gamma_{n-1}(t)\in \hat{B}_{p_n}(r_n)\},
    \end{equation}
    and $\gamma_n$ is obtained from $\gamma_{n-1}$ by retaining the portions $\gamma_{n-1}([0,a_{n-1}])$,  $\gamma_{n-1}([b_{n-1}, 1])$, and replacing  $\gamma_{n-1}([a_{n-1},b_{n-1}])$ by a subarc in $\partial \hat B_{p_n}(r_n)$ that joins $\gamma_{n-1}(a_{n-1})$ and $\gamma_{n-1}(b_{n-1})$.

    We note that for any $z\in A\cap\gamma$, let $i$ be the minimal index such that $z\in B_{p_i}(r_i)$, then $z\in \gamma_{i-1}$ but $z\notin \gamma_i$. Furthermore, in each step, the added arc on $\partial \hat B_{p_n}(r_n)$ is disjoint from $A$. Therefore, $\gamma_m\cap A=\varnothing$, i.e., $\gamma_m$ is a path in $S-A$ joining $x$ to $y$. We claim that $l(\partial \hat B_p(\delta)) \leq 2\pi \delta$ for any ball $\hat B_p(\delta)$. Assuming this, since the length of the added subarc is no greater than $l(\partial \hat B_{p_n}(r_n))$, we have
    \begin{equation}
        l(\gamma_m)\leq l(\gamma)+\sum\limits_{n=1}^m l(\partial \hat B_{p_n}(r_n))\leq L+2\pi\sum\limits_{n=1}^m r_n\leq L+ 2\pi \delta.
    \end{equation}
    Then the result follows by taking $\delta =\epsilon/2\pi$. 
    
    It remains to show  $l(\partial \hat B_p(\delta)) \leq 2\pi \delta$ on $S$. 
Let $\beta:[0, 1] \to \partial \hat B_p(\delta)$ be a parametrization of  $\partial \hat B_p(\delta)$. If $l = l(\partial \hat B_p(\delta)) > 2\pi \delta$, there exists a partition $0 = t_0<t_1<...<t_n=1$ such that $q_i = \beta(t_i)$ with $q_0 = q_n$ and $\sum_{i=0}^{n-1}d_S(q_i,q_{i+1})\geq  \pi\delta+ l/2.$
Let $\alpha_i$ be the angles of geodesic triangles $\triangle pq_iq_{i+1}$ at $p$. Since $S$ is non-negatively curved, $\sum_{i=0}^{n-1}\alpha_i \leq 2\pi$. Consider the Euclidean comparison triangles  $\triangle p'p'_ip'_{i+1}$ with the same edge lengths as that of  $\triangle pq_iq_{i+1}$ and  angles $\alpha_i'$ at $p'$. By the Toponogov comparison theorem for Alexandrov spaces with curvature bounded below \cite{BBI}, we have $\alpha_i\geq \alpha_i'$. Hence, $2\pi\geq \sum_{i=0}^{n-1} \alpha_i\geq \sum_{i=0}^{n-1} \alpha_i'$. However, in the Euclidean triangle $\triangle p'p'_ip'_{i+1}$,   $l([p',p_i']) = \delta$, and $l([p_i'p_{i+1}'])=2 \delta \sin(\alpha_i'/2) \leq \delta \alpha_i'$. Then we have a contradiction  
\begin{equation*}
    2\pi\delta<\pi\delta + \frac{l}{2}\leq \sum_{i=0}^{n-1}d_S(p_i,p_{i+1}) = \sum_{i=0}^{n-1}  l([p_i',p_{i+1}']) \leq \delta\sum_{i=0}^{n-1}   \alpha_i' \leq 2\pi\delta.\qedhere
\end{equation*}
\end{proof}

\begin{remark}
    The above argument also shows that $S-A$ is path-connected.
\end{remark}

\smallskip

\section{Proof to Theorem \ref{main2}}\label{sec:main-proof}

The proof splits into three cases:
(i) both $X_1$ and $X_2$ are 3-dimensional, (ii) both $X_1$ and $X_2$ are 2-dimensional, and (iii) exactly one of $X_1$ and $X_2$ is 3-dimensional. We first apply the Pogorelov map $\Phi$ to prove Theorem \ref{main2} for the case (i) in Section \ref{subsec:3d-3d}.  The other two cases are treated in Sections \ref{subsec:2d-2d} and \ref{subsec:2d-3d}.

\subsection{Proof of Theorem \ref{main2} in case (i): both $X_1$ and $X_2$ are $3$-dimensional}\label{subsec:3d-3d}

By composing with isometries of $\mathcal H^3$, we may assume that $e$ is in the interior of each $X_i$. By the convexity of $X_i$, we know that the $\partial X_i$ are convex surfaces which are radial with respect to  $e=(1,0,0,0)$. Let $f:\partial X_1 \to \partial X_2$ be an isometry and $S_i=\{P_i(x, f(x)):x\in\partial X_1\}$, $i=1,2$,  be the image in $\R^3$. Then, by Lemma \ref{lem:star-shaped} and Theorem \ref{lem:local-convex},  $S_i$ are locally convex surfaces that are radial with respect to  $0$ in $\R^3$.  Let $g: S_1 \to S_2$ be the local isometry produced from Theorem \ref{lem:local-convex} such that $\Phi(x, f(x)) =(y, g(y))$ for all $x \in \partial X_1$. By Corollary \ref{c:isometry}, we know that $g$ is an isometry. Our goal is to extend each $S_i$ to a closed convex surface $\tilde S_i$ in $\R^3$ and extend the isometry $g$ to an isometry $\tilde g$ between $\tilde S_1$ and $\tilde S_2$.

For $i=1,2$, let $W_i =\{ x/||x||: x \in S_i\}$ and $Z_i=\{k x: k\geq 0, x \in W_i\}=\{k x: k\geq 0, x \in S_i\}$. Clearly,  $W_i\subset \mathbb S^2$, $Z_i$ is star-shaped with center $0$.  Since $S_i$ is a radial surface without boundary, $Z_i -\{0\}$ is homeomorphic to $S_i \times \R $ and is open in $\R^3$. 

\begin{lemma} 
\label{2dHauss}
The 1-dimensional Hausdorff measure $\mathfrak H^1(\mathbb S^2 -W_i)$  of $\mathbb S^2-W_i$ is zero.  Furthermore, the 2-dimensional Hausdorff measure $\mathfrak H^2(\R^3-Z_i)$ of $\R^3-Z_i$ is zero.  
\end{lemma}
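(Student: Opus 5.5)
The plan is to identify $\mathbb S^2 - W_i$ with the set of limit points of $X_1$ (or $X_2$) at infinity, transported by the radial correspondence. Then the zero-length hypothesis of Theorem \ref{main2} gives the first claim, and Lemma \ref{lem:Hausdorff} gives the second.

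\emph{Step 1: the directions missed by $S_i$.} First describe the directions of $S_1$ explicitly. Since $e$ lies in the interior of the convex set $X_1$, each geodesic ray $\gamma$ from $e$ either exits $X_1$, meeting $\partial X_1$ in exactly one point, or stays in $X_1$ forever. In the second case its endpoint at infinity lies in $\Lambda_1 := \overline{X_1}\cap\partial\H^3$. Conversely, if the endpoint of $\gamma$ is in $\Lambda_1$, then by convexity and since $e$ is interior, $\gamma$ lies entirely in $X_1$. So the set of directions at $e$ not hit by $\partial X_1$ is exactly the visual image of $\Lambda_1$. By Lemma \ref{prop:star-shaped}(i), $P_1(x,f(x))$ lies on the ray $\PP(\gamma)$ whenever $x\in\gamma$. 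Moreover $\PP$ maps rays from $e$ bijectively onto rays from $0$, and through the Klein model this is exactly the identification of the unit tangent sphere at $e$ with $\mathbb S^2$. Hence $\mathbb S^2 - W_1$ is the image of $\Lambda_1$ under the natural map $\partial\H^3\to\mathbb S^2$ sending an ideal point to the direction of the ray from $e$ toward it.

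For $W_2$ the same argument works. Indeed $P_2(x,f(x))\in\PP(\gamma')$ with $f(x)\in\gamma'$, and $f$ is a bijection onto $\partial X_2$, so $\mathbb S^2-W_2$ corresponds to $\Lambda_2$.

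\emph{Step 2: transferring the measure bound.} Next, this correspondence between $\partial\H^3_P$ and $\mathbb S^2$ is a diffeomorphism of spheres, hence locally Lipschitz. Concretely, after an isometry moving $e$ to the center of the Poincar\'e ball, it is the identity on the unit sphere. The normalization "$e$ in the interior" was obtained by composing with an isometry of $\H^3$, which acts on $\partial\H^3_P$ as a M\"obius transformation and is therefore Lipschitz. So $\mathfrak H^1(\Lambda_i)=0$ is preserved under both maps, which gives $\mathfrak H^1(\mathbb S^2-W_i)=0$.

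\emph{Step 3: the cone.} Finally, $\R^3 - Z_i \subset R^{-1}(\mathbb S^2-W_i)$, because $0\in Z_i$ and every $x\neq0$ with $x/\|x\|\in W_i$ lies in $Z_i$. Lemma \ref{lem:Hausdorff} then gives $\mathfrak H^2(\R^3-Z_i)=0$.

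I expect the main obstacle to be the careful proof of Step 1. One must check that a ray from an interior point which never leaves $X_1$ has its endpoint in the closure of $X_1$ at infinity, which is immediate. One must also check the converse: an ideal point in $\overline{X_1}$ gives a ray entirely inside $X_1$. For the converse I would use that $X_1$ is closed and convex with $e$ interior. Take $X_1\ni p_k\to\xi$. The segments $[e,p_k]$ lie in $X_1$, and they converge on compact sets to the ray $[e,\xi)$, so that ray lies in the closed set $X_1$.
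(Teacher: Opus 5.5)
Your proposal is correct and takes essentially the same route as the paper: it identifies $\mathbb S^2 - W_i$ with the ideal set $\overline{M_i}\cap\mathbb S^2$ of $M_i=\Pi_P(X_i)$, using that $P_i(x,f(x))$ points in the same direction as $\Pi_P(x)$ (resp.\ $\Pi_P(f(x))$ for $i=2$), and then applies Lemma \ref{lem:Hausdorff}. Your version spells out two points the paper leaves implicit: the limit-of-segments argument showing that the ray toward an ideal point of $\overline{X_i}$ stays in $X_i$, and the fact that the zero-length hypothesis survives the normalizing isometry (a M\"obius map, hence Lipschitz on $\mathbb S^2$).
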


\begin{proof} The last statement in the lemma follows from the first by using Lemma \ref{lem:Hausdorff}.

For the first statement, note that the map $\Pi_P: \mathcal H^3 \to \H^3_P:=\{ y \in \R^3: ||y||<1\}$ given by $\Pi_P(x) =\PP(x)/(1+x_0)$ sending $(x_0, x_1, x_2, x_3)$ to $(x_1, x_2, x_3)/(1+x_0)$ is the canonical isometry from the hyperboloid model to the Poincar\'e model $\H^3_P$ of $\H^3$. It follows that $M_i =\Pi_P(X_i)$ is a closed convex set in $\H^3_P$ containing $0$ in its interior.  By our assumption, $\mathfrak H^1(\overline{M_i} \cap \mathbb S^2)=0$. Here 
$\mathbb {S} ^2= \partial \H^3_P$ is the conformal boundary at infinity of the hyperbolic 3-space and $\overline{L}$ is the closure of a set $L$ in $\R^3$.  Thus, the lemma follows if we prove that
\begin{equation} \label{eq1234} \overline{M_i} \cap \mathbb S^2 = \mathbb S^2 -W_i. \end{equation}
Consider the surjective map $h_i(x) =P_i(x, f(x)): \partial X_1 \to S_i$. By the definition of $P_i$ in (\ref{P_i}),  $$\frac{h_i(x)}{||h_i(x)||} =\frac{\PP(x)}{||\PP(x)||}=\frac{\Pi_P(x)}{||\Pi_P(x)||}. $$  In particular, 
$$W_i =\left\{ \frac{y}{||y||}: y \in S_i\right\} = \left\{ \frac{h_i(x)}{||h_i(x)||}: x \in \partial X_1\right\} =\left\{\frac{\Pi_P(x)}{||\Pi_P(x)||}: x \in \partial X_1\right\} =\left\{\frac{z}{||z||}: z \in \partial M_i\right\}.$$
Let $R(z)={z}/{||z||}: \R^3 -\{0\} \to \mathbb S^2$ be the radial projection. Then $W_i =R(\partial M_i)$. Therefore, \eqref{eq1234} is a consequence of the following fact about convex geometry. That is, if $M$ is a convex set contained in the open unit ball $\mathbb B^3$ such that $M$ is closed in $\mathbb B^3$ and $0$ is in the interior of $M$, then $\overline{M} \cap \mathbb S^2 =\mathbb S^2-R(\partial M)$. It is well-known that the compact set $\overline{M}$ is convex, its interior $\inte(\overline{M}) =\inte(M)$, and $\partial (\overline{M}) \cap \mathbb B^3 =\partial M$. 
Now, if $y =R(x) \in R(\partial M)$ with $x \in \partial M$, then since $0\in \inte(\overline{M})$, the line segment $[0, x] = \gamma \cap M$ where $\gamma$ is the ray from $0$ containing $x$. But $y \notin [0,x]$ and hence $y \notin \overline{M}$. It follows that  $R(\partial M) \subset \mathbb S^2 -\overline{M}$. On the other hand, if $y \in \mathbb S^2 -\overline{M}$, consider $[0, y] \cap \overline{M}$. Since $0 \in \inte(\overline{M})$, there exists $x \in \partial M$ such that $x \in [0, y]$. This implies $y=R(x) \in R(\partial M)$, i.e., $\mathbb S^2 -\overline{M} \subset R(\partial M)$. 
\end{proof}

    Define the function $q_i\from Z_i\to \R$ by
    \begin{equation}
        q_i(\alpha)=\inf \{k>0:  {\alpha}/{k} \in S_i\}\geq 0,
    \end{equation}
    which has the following properties:
    \begin{enumerate}[label=(\roman*)]
        \item $q_i(k\alpha)=k \cdot q_i(\alpha)$, for any $\alpha\in Z_i$ and $k>0$;
          \item $Y_i=\{\alpha:q_i(\alpha)\leq 1\} =\{ k x:  k \in[0,1], x \in S_i\}$ and $S_i =\{ x \in Z_i:  q_i(x)=1\}$;
        \item $q_i$ is a locally convex function on the open set $Z_i -\{0\}$.  
    \end{enumerate}

    Property (i) follows from the definition, and Property (ii) follows from the definition and the fact that $S_i$ is radial with respect to $0$.  To prove property (iii), take any $a \in Z_i -\{0\}$. We will show $q_i$ is convex in a neighborhood of $a$. 
    Let $a^*$ be the point in $S_i$ such that $a^*= ta$ for some $t> 0$. By Theorem \ref{lem:local-convex},  there exists a convex body $X \subset \mathbb{R}^3$ such that $0 \in X$ and $\partial X\cap S_i$ contains a neighborhood of $a^*$ in $S_i$. Since $\partial X$ is a surface, we can choose a sufficiently small round ball $B \subset \mathbb{R}^3-\{0\}$ centered at $a^*$ such that $S_i \cap C(B)= \partial X \cap C(B)$, where $C(B) =\{ kx: k >0, x \in B\}$ is the open cone over $B$ from $0$. Since $B$ is an open convex set,  $C(B)$ is an open convex set and contains the point $a$. We claim that $C(B) \cap Y_i$ is convex. 
    Indeed, since $Y_i$ and $X$ are star-shaped with respect to $0$, we have $C(B)\cap Y_i  =\{ kx: 0<k \leq 1, \quad  x \in C(B) \cap S_i\} =\{ k x: 0<k \leq 1, \quad  x \in C(B) \cap \partial X\} =C(B)  \cap X$ which is the intersection of two convex sets. 
    To show that $q_i$ is a convex function on $C(B)$,  by property (i) that $q_i$ is positive 1-homogeneous, it suffices to verify that for any $x, y\in C(B)$, 
    $$q_i\bigl(\frac{x+y}{2}\bigr)\leq \frac{1}{2}\bigl(q_i(x) + q_i(y)\bigr),$$
    or equivalently, 
    $$q_i(x+y)\leq q_i(x) + q_i(y).$$ For any $\epsilon>0$, we can choose $\lambda$ and $\mu$ respectively such that  ${x}/{\lambda}, {y}/{\mu}\in C(B) \cap Y_i$ and 
    $$\lambda \leq q_i(x) + \epsilon, \quad \mu \leq q_i(y) + \epsilon.$$
    Since $C(B) \cap Y_i $ is convex, the point 
    $$\frac{x+y}{\lambda+\mu} = \frac{\lambda}{\lambda+\mu}\cdot\frac{x}{\lambda} + \frac{\mu}{\lambda+\mu}\cdot\frac{y}{\mu}\in C(B)\cap Y_i.$$
    Therefore, 
    $$q_i(x+y) \leq \lambda+\mu \leq q_i(x) + q_i(y) + 2\epsilon.$$
    By taking $\epsilon \to 0$, we obtain $q_i(x+y)\leq q_i(x) + q_i(y)$. 

    Applying Lemma \ref{2dHauss} and Theorem \ref{thm:Tabor} to $U = \R^3$ and the closed subset $A = (\R^3-Z_i)\cup \{0\}$, we obtain that $q_i$ has a unique locally convex extension $\tilde{q}_i$ to $\R^3$. By Lemma \ref{local2gloab}, $\tilde{q_i}: \R^3 \to \R$ is a convex function. Moreover, since $Z_i$ is the union of rays starting at $0$ and $Z_i$ is dense in $\R^3$, we have $\tilde{q}_i(k\alpha)=k \cdot \tilde q_i(\alpha)$, for any $\alpha\in \R^3$ and $k>0$. 
    
    Let $\tilde{Y}_i=\{\alpha:\tilde{q}_i(\alpha)\leq 1\}$ and $\tilde{S}_i=\{\alpha:\tilde{q}_i(\alpha)=1\}$. We claim that $\tilde{Y}_i$ is a compact convex set containing $0$.  
    Indeed, $\tilde{Y}_i$ is closed and convex since $\tilde{q}_i$ is continuous and convex.  If $\tilde Y_i$ is not compact, there exist $y_n \in \tilde Y_i$ such that $|y_n| \to \infty$. Without loss of generality, we may assume that $y_n/|y_n|\to b \in \mathbb S^2$. Then $\tilde q_i(b) = \lim_{n\to\infty} \tilde q_i(y_n)/|y_n| = 0$ since $\tilde q_i(y_n)\leq 1$. However, since $Z_i-0$ is dense in $\R^3$, there exist a sequence of points $s_n\in S_i$ and a sequence of positive numbers $k_n$ such that $k_ns_n\to b$. Since $\tilde q_i(s_n) = 1$ by definition, we have $k_n\to 0$. Notice that by the inequality (\ref{boundness}) of the Pogorelov map, $|s_n|\leq 2$, then $|b| =\lim_{n\to\infty}  |k_ns_n| = 0$, which contradicts the fact that $b \neq 0$. Therefore, $\tilde{Y}_i$ is a convex body and $\tilde{S}_i=\partial\tilde{Y}_i$ is a closed convex surface in $\R^3$.

    By a theorem of Vrećica~\cite[Theorem 1]{vrecica}, the projection  $R\from \tilde{S}_i\to \mathbb{S}^2$ given by $R(\alpha)=\alpha/\|\alpha\|$ is a co-Lipschitz homeomorphism, i.e., $R^{-1}: \mathbb{S}^2 \to \tilde{S}_i$ is Lipschitz. 
    Then by $\mathfrak H^1(\overline{M}_i\cap\partial\H^3_P)=0$, we see $\tilde{S}_i-S_i=R^{-1}(\overline{M}_i\cap\partial\H^3_P)$ has 1-dimensional Hausdorff measure 0. By Theorem \ref{thm:path metric}, $d_{\tilde{S}_i}|_{S_i}=d_{S_i}$.

    We claim that the isometry $g\from S_1\to S_2$  extends to an isometry $\tilde{g}\from\tilde{S}_1\to\tilde{S}_2$. To see this, let $\alpha\in\tilde{S}_1-S_1$. By $\mathfrak H^1(\tilde{S}_1-S_1)=0$, we know $\tilde{S}_1-S_1$ has no interior points. Thus we can find a sequence $\{\alpha_n\}\subset S_1$ such that $\alpha_n\to \alpha$ in $\tilde{S}_1$ as $n\to\infty$. Since  $\tilde{S}_2$ is compact, there exists $\beta\in\tilde{S}_2$ such that $g(\alpha_n)\to \beta$ after passing to a subsequence. Since $g: S_1 \to S_2$ is a homeomorphism, we see that $\beta \notin S_2$, i.e., $\beta\in \tilde{S}_2-S_2$. 
    Moreover, such $\beta$ is unique. Indeed, if there is a second sequence $\{\alpha_n'\}\subset S_1$ such that $\alpha_n '\to \alpha$ and $g(\alpha_n')\to \beta'$,  then, by Theorem \ref{thm:path metric},  we have
    \begin{equation}
        d_{\tilde{S}_1}(\alpha_n,\alpha '_n)=d_{S_1}(\alpha_n,\alpha '_n)=d_{S_2}(g(\alpha_n),g(\alpha '_n))=d_{\tilde{S}_2}(g(\alpha_n),g(\alpha '_n))\to d_{\tilde{S}_2}(\beta,\beta').
    \end{equation}
    Therefore $d_{\tilde{S}_2}(\beta,\beta')=0$, i.e., $\beta=\beta'$. Defining $\tilde{g}(\alpha)=\beta$ for $\alpha\in\tilde{S}_1-S_1$ as above,  we obtain a map $\tilde{g}\from\tilde{S}_1\to\tilde{S}_2$. By applying the same argument to the isometry $g^{-1}\from S_2\to S_1$, we obtain an extension $\widetilde{g^{-1}}$ from $\tilde{S}_2$ to $\tilde{S}_1$. It follows from the definition, $\widetilde{g^{-1}}$ is the inverse of $\tilde{g}$. This implies that $\tilde{g}$ is a bijection.  

    We claim that the extended map $\tilde{g}$ is an isometry. Indeed, for any $\alpha,\alpha'\in\tilde{S}_1$, let $\{\alpha_n\},\{\alpha'_n\}$ be two sequences in $S_1$ such that $\alpha_n\to \alpha$ and $\alpha'_n\to \alpha'$ as $n\to\infty$. Then, by the above discussion, $g(\alpha_n)\to\tilde{g}(\alpha)$ and $g(\alpha'_n)\to\tilde{g}(\alpha')$ in $\tilde{S}_2$. Therefore, by Theorem \ref{thm:path metric} again, we have
    \begin{equation}
        d_{\tilde{S}_1}(\alpha_n,\alpha'_n)=d_{S_1}(\alpha_n,\alpha'_n)=d_{S_2}(g(\alpha_n),g(\alpha'_n))=d_{\tilde{S}_2}(g(\alpha_n),g(\alpha'_n))\to d_{\tilde{S}_2}(\tilde{g}(\alpha),\tilde{g}(\alpha')).
    \end{equation}
    By taking limit,  we have $d_{\tilde{S}_1}(\alpha_n,\alpha'_n)\to d_{\tilde{S}_1}(\alpha,\alpha')$. Hence $d_{\tilde{S}_1}(\alpha,\alpha')=d_{\tilde{S}_2}(\tilde{g}(\alpha),\tilde{g}(\alpha'))$, and  $\tilde{g}$ is an isometry.

    By Pogorelov's Theorem \ref{pogo1}, there exists an isometry $\phi$ of $\R^3$ such that $\phi|_{\tilde{S}_1}=\tilde{g}$. In particular, $\phi|_{S_1}=g$, and $\phi$ sends the convex body bounded by $\tilde S_1$ to the convex body bounded by $\tilde S_2$, i.e., $\phi(\tilde Y_1) =\tilde Y_2$. Now     $0$ is in the interior of $\tilde{Y}_1$, then $\phi(0)$ is contained in the interior of $\tilde{Y}_2$. By Proposition \ref{prop:Pogorelov-inverse}, for any $\beta\in Y_2$, $\|\beta\|<2$. Thus, for any $\beta\in\tilde{Y}_2$, $\|\beta\|\leq 2$. We then have $\|\phi(0)\|< 2$ since $\phi(0)$ is an interior point in $\tilde{Y}_2$. Therefore, by Proposition \ref{prop:BtoA}, there exists $\tilde{\phi}\in\Isom(\mathcal H^3)$, such that for all $y\in S_1$, 
    $\Psi(y,\phi(y))=(x,\tilde{\phi}(x))$. Note that the condition $(y, \phi(y)) \in \Omega$ is automatically satisfied.

    Now for any $x\in \partial X_1$, let $y=P_1(x,f(x))\in S_1$. By the definition of $g$, we have $\Phi(x, f(x)) =(y, g(y))$.  By Proposition \ref{prop:Pogorelov-inverse},  $\Psi: \Omega \to \mathcal H^3 \times \mathcal H^3$ is the inverse of $\Phi|_{\mathcal H^3 \times \mathcal H^3}$. It follows that 
    \begin{equation}
        (x,f(x))=\Psi(y,g(y))=\Psi(y,\phi(y))=(x,\tilde{\phi}(x)).
    \end{equation}
    Thus $\tilde{\phi}|_{\partial X_1}=f$, and the proof is complete.

\subsection{Proof of Theorem \ref{main2} in case (ii): both $X_1$ and $X_2$ are 2-dimensional}\label{subsec:2d-2d}

Suppose that $X_1$ and $X_2$ are closed convex sets of dimension two in $\H^3_P$. Without loss of generality, we assume the $X_i$ are contained in the canonical copy $\H^2_P\subset\H^3_P$ where $\H^2_P =\H^3_P \cap \{(x,y,0): x, y \in \R\}$.  Take two copies of $X_i$ and glue them along their 1-dimensional boundaries by the canonical map.  We obtain the Alexandrov double $D(X_i)$ of $X_i$. Let the images of these two copies in $D(X_i)$ be $X_i^+$ and $X_i^-$ and their common boundary be $Y_i$, i.e., $Y_i =\partial X_i^{\pm}$. It follows that each $X_i^{\pm}$ is isometric to $X_i$, and  
$D(X_i) = \mathring{X}^+_i \sqcup \mathring{X}^-_i\sqcup Y_i$. For simplicity, we will consider $X^{\pm}_i$ as a convex subset of $\H^2$ in the proof below. In particular, if $p, q \in X^{\pm}_i$, we use $[p,q]$ to be the unique shortest geodesic in $X^{\pm}_i$ from $p$ to $q$. Note that $[p,q]$ is also a geodesic in $\H^2$. 

The key observation for the proof is that if $p \in X^{+}_i$ (or $X^-_i$) and $q \in X^{+}_i-Y_i$ (or $X^{-}_i-Y_i$), then the geodesic segment $[p,q]$ is the unique shortest geodesic path in $D(X_i)$ joining $p$ and $q$. To see this, suppose otherwise that there is a shortest path $\gamma \neq [p,q]$ from $p$ to $q$ in $D(X_i)$. Then $\gamma$ is not contained in $X^{\pm}_i$ since $[p,q]$ is the unique shortest path in the convex set $X^{\pm}_i$. Therefore,  $\gamma -\{p,q\}$ must intersect $Y_i$. Let $Q: D(X_i) \to X_i$ be the map such that the restriction $Q|: X^{\pm}_i \to X_i$ is the identity map.  Then $Q(\gamma)$ is a path in the convex set $X_i$ joining $Q(p)$ and $Q(q)$ and $Q(\gamma) -\{Q(p), Q(q)\}$ intersects $\partial X_i$. Therefore, $Q(\gamma) \neq [Q(p), Q(q)]$ and the length of $Q(\gamma)$ is strictly larger than the length of $[Q(p), Q(q)]$ which is equal to $l([p,q])$.  On the other hand, the length of $Q(\gamma)$ is equal to the length of $\gamma$.  It follows that the length $l(\gamma) =l(Q(\gamma)) > l([p,q])$, which contradicts the choice of $\gamma$.

Given an isometry $f: D(X_1) \to D(X_2)$, our goal is to show that $f(X_1^+)$ is equal to either $X_2^+$ or $X_2^-$, i.e., $f(Y_1) = Y_2$.  Assuming this and composing with the isometric reflection of the double $D(X_i)$ which switches $X^+_i$ with $X^-_i$ if necessary, we may assume that $f(X_1^+) =X_2^+$ and $f(X_1^-) =X_2^-$. Since $X_i$ are two-dimensional, the restrictions of the isometry $f$ to $X_1^+$ and  $X_1^-$ extend to isometries of $\H^2_P$. Since these two isometries agree on $Y_1$, they are identical, i.e., $f$ is induced by a rigid motion of $\H_P^2$. 

It remains to prove that $f(Y_1) = Y_2$. By applying the same argument to $f^{-1}$, we only need to prove $f(Y_1)\subset Y_2$.  Suppose otherwise, without loss of generality, we may assume that $f(Y_1)\cap \mathring{X}_2^+\neq \varnothing$. Since $f$ is an isometry, it follows that $f(\mathring{X}_1^+)\cap \mathring{X}_2^+\neq \varnothing$ and $f(\mathring{X}_1^-)\cap \mathring{X}_2^+\neq \varnothing$.

Note that the non-empty sets $f(X_1^+)\cap X_2^+$ and $f(X_1^-)\cap X_2^+$ are convex. Indeed, if $f(X_1^+)\cap X_2^+$ is not convex, there exist two distinct points $p, q\in  X_1^+$ with $f(p), f(q)\in X_2^+$ such that the geodesic segment $[f(p), f(q)]$ in $X^+_2$ is not contained in $f(X_1^+)\cap X_2^+$. Since $X_1^+$ is convex,  the geodesic segment $[p,q]$ joining $p$ to $q$ is contained in $X_1^+$.  It follows that  $[f(p), f(q)] \not\subset f(X_1^+)\cap X_2^+$ implies $f([p,q]) \neq [f(p), f(q)]$. Hence,  the length of $f([p,q])$ is strictly greater than the length of $[f(p), f(q)]$. 
On the other hand, $f$ is an isometry between the two metric doubled surfaces $(D(x_1), d_{D(X_1)})$ and $(D(X_2), d_{D(X_2)})$ and preserves lengths.  Furthermore, 
both $[p,q]$ and $[f(p), f(q)]$ are the shortest paths in $D(X_i)$ from $p$ to $q$ and $f(p)$ to $f(q)$ respectively. We obtain
$$d_{D(X_1)}(p,q)=l([p,q]) =l(f([p,q])) > l([f(p), f(q)])=d_{D(X_2)}(f(p), f(q)),$$ which contradicts the fact that $f$ is an isometry.
By the same argument, $f(X_1^-)\cap X_2^+$ is also convex. 

Since $f({X}_1^+)\cap {X}_2^+$ and $f({X}_1^-)\cap {X}_2^+$ are two convex sets with disjoint interiors in the convex surface $X_2^+$, their 1-dimensional intersection must be a geodesic, i.e., 
$$\beta^+ = f(Y_1)\cap {X}_2^+ =f({X}_1^+)\cap f({X}_1^-) \cap {X}_2^+ $$  is a geodesic in $X_2^+$. 
Similarly, $$\beta^- = f(Y_1)\cap {X}_2^- =f({X}_1^+)\cap f({X}_1^-) \cap {X}_2^+$$
is either the empty set or a geodesic in  $X_2^-$.  Furthermore, since $\beta^{\pm}$ are geodesic in $\H^2$, $f^{-1}(\beta^{\pm})$ are geodesic in $X_1^{\pm}$ and are contained in $Y_1$. Notice that any geodesic connecting two points in $Y_1$ in $X_1$ is a geodesic in $\H^2$, so $f^{-1}(\beta^{\pm})$ are also geodesics in $\H^2$. See Figure \ref{onering} for the illustration.

\begin{figure}[htbp]
  \centering  
\includegraphics[width=0.9\textwidth]{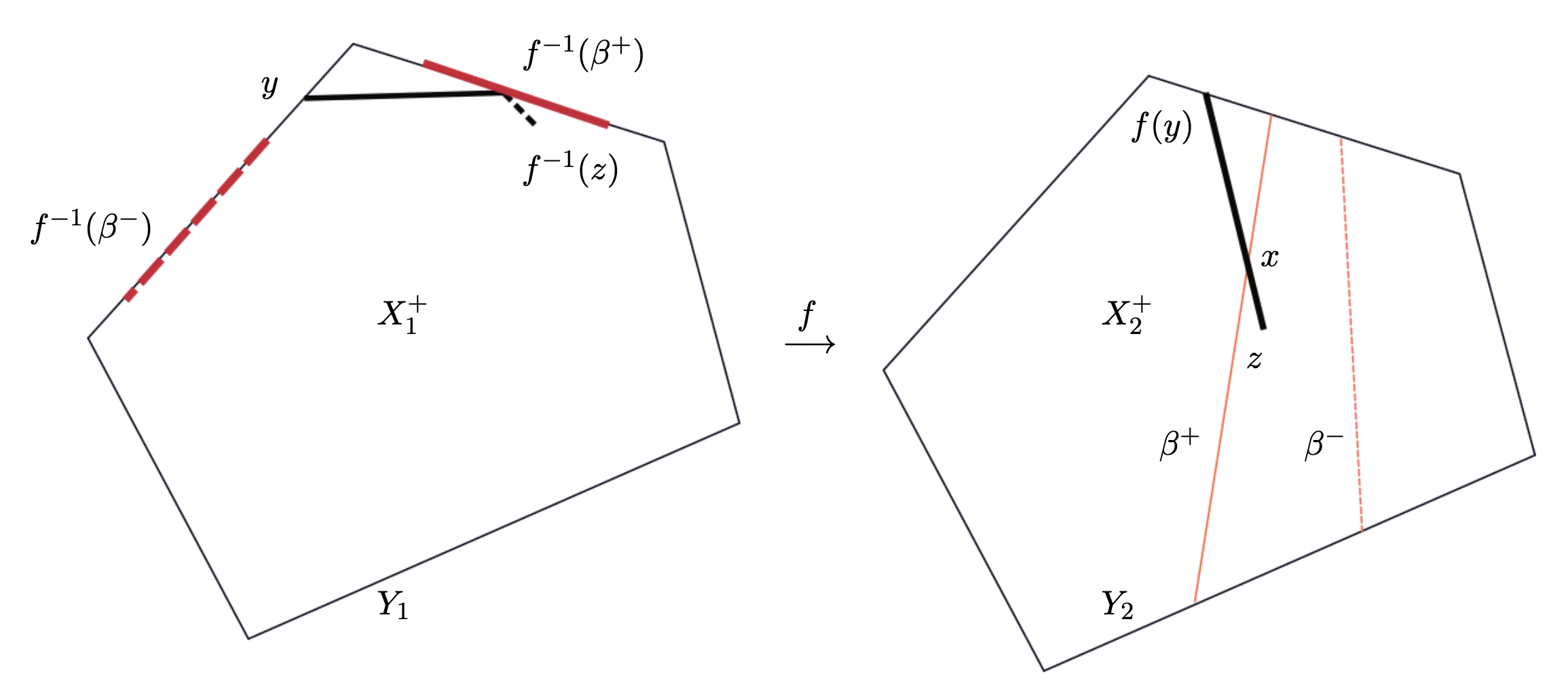}

\caption{Case (ii) where both $X_1$ and $X_2$ are 2-dimensional. Picture drawn in the Klein model.}
\label{onering}
\end{figure}

We claim that $f(Y_1) \neq \beta^+ \cup \beta^-$, i.e., $Y_1 \neq f^{-1}(\beta^+) \cup f^{-1}(\beta^-)$.
Otherwise, $Y_1= f^{-1}(\beta^+) \cup f^{-1}(\beta^-).$  This means that the closed convex set $X_1 \subset \H^2_P$ is bounded by two distinct geodesics $f^{-1}(\beta^+)$ and $f^{-1}(\beta^-)$ (if $\beta^- \neq \varnothing$) or by exactly one geodesic  $f^{-1}(\beta^+)$ (if $\beta^- =\varnothing$). In both cases, since $\dim(X_i)=2>1$, we see that the Hausdorff measure $\mathfrak{H}^1(\overline{X_1} \cap \partial \H^2_P) >0$. This contradicts the assumption.

Now take a point $y\in Y_1- f^{-1}(\beta^+) \cup f^{-1}(\beta^-)  $. Then  $f(y) \notin \mathring X_2^+ \cup \mathring X_2^-$, and hence $f(y) \in Y_2$.  Consider a point $x$ in $\beta^+ \cap \mathring X_2^+$ and the geodesic $\gamma$ in $X_2^+$ containing $f(y)$ and $x$. Since $x$ is in the interior of $X_2^+$, there exists a point $z \in \gamma  -Y_2$  such that the geodesic segment $[f(y), z]$ contains $x$ in its interior, i.e., $[f(y), z]$ intersects both $f(\mathring X_1^+)$ and $f(\mathring X_1^-)$.   Due to $z \in \mathring X_2^+$ and $f(y) \in X_2^+$,  the geodesic segment $[f(y), z]$ is the unique shortest geodesic segment connecting $z$ and $f(y)$ in $D(X_2)$. This shows $f^{-1}([f(y), z])$ is the unique shortest geodesic connecting $y$ and $f^{-1}(z)$ in $D(X_1)$.  Note that the point $f^{-1}(z)$ is in $\mathring X_1^+$ or $\mathring X_1^-$; say $f^{-1}(z) \in \mathring X_1^+$.  Since $y \in Y_1 \subset X_1^+$, we see that the shortest geodesic between $y$ and $f^{-1}(z)$  is the geodesic segment $[y, f^{-1}(z)] \subset X_1^+$. By the uniqueness of the shortest geodesics from $y$ to $f^{-1}(z)$, we have $f^{-1}([f(y), z]) =[y, f^{-1}(z)]$.  In particular, it implies that  $f^{-1}([f(y), z])$ is disjoint from $\mathring X_1^-$. But that contradicts the construction that $[f(y), z]$ intersects $f(\mathring X_1^-)$.  This ends the proof of case (ii).

\subsection{Proof of Theorem \ref{main2} in case (iii):  exactly one of $X_1$ and $X_2$ is 3-dimensional}\label{subsec:2d-3d}
We begin with several lemmas in convex geometry.
Recall that a subset $A$ of $\mathbb S^2$ is \emph{convex}, if for any $x,y\in A$, $A$ contains a shortest-distance geodesic connecting $x$ and $y$. For any $x\in \mathbb S^2$, let $B_x(\pi/2)$ be the open hemisphere centered at $x$.  
\begin{lemma}\label{lem:convex-hemisphere}
    Suppose $A$ is a closed convex subset in $\mathbb{S}^2$ and $A$ does not contain any pair of antipodal points. Then there exists $\alpha\in A$     
    such that $A\subset B_\alpha(\pi/2)$. 
\end{lemma}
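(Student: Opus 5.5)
The plan is to pass from spherical convexity to linear convexity in $\R^3$ through the cone over $A$. The point $\alpha$ will be the radial projection of the point of the Euclidean convex hull of $A$ that is nearest to the origin. Throughout, $A$ is assumed nonempty (otherwise the statement is vacuous).

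\emph{Step 1: the cone is convex.} Let $C=\{tx: t\geq 0,\ x\in A\}\subset\R^3$. Take $x,y\in A$. Since $A$ contains no antipodal pair, $y\neq -x$, so the shortest geodesic between $x$ and $y$ in $\mathbb S^2$ is the unique minor great-circle arc. Convexity of $A$ therefore forces this arc to lie in $A$. The arc consists exactly of the directions $(sx+ty)/\|sx+ty\|$ with $s,t\geq 0$ not both zero, and $sx+ty\neq 0$ for such $s,t$. Hence every nonnegative combination of two points of $C$ lies in $C$, so $C$ is a convex cone. Since $A$ is compact, $C$ is closed, and $C\cap\mathbb S^2=A$.

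\emph{Step 2: the hull of $A$ avoids the origin.} Let $K=\mathrm{conv}(A)$, which is compact because $A$ is. Suppose $0\in K$, say $0=\sum_i\lambda_i x_i$ with $x_i\in A$, $\lambda_i>0$ and at least two terms. Then $-\lambda_1x_1=\sum_{i\geq 2}\lambda_i x_i\in C$. This vector is nonzero, so $-x_1\in C\cap\mathbb S^2=A$, and $A$ contains the antipodal pair $x_1,-x_1$, a contradiction. Hence $0\notin K$.

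\emph{Step 3: choice of $\alpha$.} Let $w$ be the point of $K$ nearest to $0$, so $w\neq 0$. The standard nearest-point characterization for a compact convex set gives $w\cdot x\geq \|w\|^2>0$ for every $x\in K$, in particular for every $x\in A$. Since $K\subset C$ by Step 1, we have $w\in C-\{0\}$, so $\alpha:=w/\|w\|\in C\cap\mathbb S^2=A$. Then $\alpha\cdot x>0$ for all $x\in A$, which says exactly that the spherical distance from $\alpha$ to each point of $A$ is less than $\pi/2$. Thus $A\subset B_\alpha(\pi/2)$.

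The only delicate point is Step 1: one must check that spherical convexity (existence of \emph{a} shortest geodesic in $A$) yields linear convexity of the cone. This works precisely because the no-antipodal hypothesis makes shortest geodesics unique and equal to the minor arcs. After that, Steps 2 and 3 are standard separation arguments.
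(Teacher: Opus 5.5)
Your proof is correct and follows essentially the same route as the paper: pass to the cone $C$ over $A$ and the Euclidean hull $K$, rule out $0\in K$ by producing an antipodal pair, and take $\alpha$ to be the radial projection of the point of $K$ nearest the origin, using the nearest-point inequality. The only difference is that you explicitly justify convexity of the cone, using the no-antipodal hypothesis to make shortest arcs unique, whereas the paper simply asserts it.
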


\begin{proof}
    Let $K$ be the convex hull of $A$ in $\R^3$ and $C(A)=\{ta:t\geq 0, a\in A\}$ be the cone over $A$ from $0$. Since $A$ is convex in $\mathbb{S}^2$, we know $C(A)$ is convex in $\R^3$ and $K \subset C(A)$.
    
    We first claim that the origin $0 \notin K$. 
    Suppose otherwise that $0\in K$, and then there exist $a_1,a_2,\dots,a_n\in A$ and $\lambda_1,\lambda_2,\dots,\lambda_n>0$ such that 
    $\sum_{i=1}^n \lambda_i a_i=0$ with $\sum_{i=1}^n \lambda_i=1$. It follows that
    $$-a_1=\frac{1}{\lambda_1}\sum\limits_{i=2}^n \lambda_i a_i.$$
    Since $C(A)$ is convex, we have $\sum_{i=2}^n \lambda_i a_i\in C(A)$ and hence $-a_1\in C(A)$. Then by $-a_1\in \mathbb{S}^2$, we have $-a_1\in A$. This is a contradiction of the assumption that $A$ does not contain any pair of antipodal points. 

    Since $ 0 \notin K$, $d_E(0,K)>0$ where $d_E(\cdot, \cdot)$ is the Euclidean distance in $\R^3$. Since $K$ is compact, there exists $\beta\in K$ such that $\|\beta\|=d_E(0,\beta)=d_E(0,K)$. Let $\alpha=\beta/\|\beta\|\in \mathbb{S}^2$. By $\beta\in K\subset C(A)$, we have $\alpha\in C(A)$, and therefore $\alpha\in A$.

    We claim that $A\subset B_\alpha(\pi/2)$. To see this, for any $x\in K$ and $t \in [0,1]$, consider $x_t=(1-t)\beta+tx=\beta+t(x-\beta)\in K$. Since $\beta$ is the minimum point,  $\|x_t\|\geq\|\beta\|$, i.e.
    $$\lan \beta, \beta\ran\leq\lan x_t, x_t\ran=\lan \beta+t(x-\beta), \beta+t(x-\beta)\ran=\lan \beta, \beta\ran+2t\lan \beta, x-\beta\ran+t^2\lan x-\beta, x-\beta\ran.$$
    Here $\lan\cdot,\cdot\ran$ is the standard inner product in $\R^3$.
    Thus $2t\lan \beta, x-\beta\ran+t^2\lan x-\beta, x-\beta\ran\geq0$ for all $t\in[0,1]$. This implies $\lan \beta, x-\beta\ran\geq0$, i.e., $\lan \beta, x\ran\geq\lan \beta, \beta\ran>0$. Hence $\lan \alpha, x\ran>0$, for all $x\in K$. By $A\subset K$, we have $\lan \alpha, x\ran>0$, for all $x\in A$, and therefore $A\subset B_\alpha(\pi/2)$.
\end{proof}

Suppose $X$ is a 3-dimensional convex set in $\H^3$ and $p \in \partial X$ in $\H^3$, a non-zero tangent vector $v$ in $T_p\H^3$ is called a \emph{normal vector} to $X$ or $\partial X$ at $p$, if the plane perpendicular to $v$ at $p$ is a supporting plane of $X$ at $p$. A normal vector $v$ is an \emph{exterior normal vector} if 
$v$ points away from $X$.   A tangent vector $w \in T_p \H^3$ is said to  \it point toward the interior of $X$ (or toward $X$) \rm if there exists a point $q \in \mathring X$ (or $q \in X$) such that $w$ is tangent to the geodesic segment from $p$ to $q$. It is well known that if a non-zero vector $w$ is the limit of a sequence of normal vectors to a convex set $X$ (at possibly different base points), then $w$ is a normal vector to $X$.

By a \emph{convex cap}, we mean a compact subsurface $K$ with boundary contained in a complete convex surface $S$ such that (1) the boundary of $K$ is a convex curve contained in a plane $\Pi$ and $K$ is contained in one closed half-space bounded by $\Pi$, and (2) the shortest distance projection from $K$ to $\Pi$ is injective. In fact, the surface $K$ is the intersection of $S$ with the closed half-space bounded by $\Pi$. In particular, the shortest distance projection $\pi$ is a homeomorphism from $K$ to the convex set $\Pi \cap X$ where $X$ is the closed convex region bounded by $S$.  Indeed, by the invariance of domain theorem, $\pi(K)$ is a compact 2-dimensional submanifold of $\Pi$ whose boundary is equal to the boundary of the convex set $\Pi \cap X$. Therefore, $\pi(K) =\Pi \cap X$.

The following lemma contains basic information about supporting planes and convex caps.  Part (iv) below is an analogous result of Busemann~\cite[Theorem 11.9]{Busemann} in the hyperbolic 3-space.

\begin{lemma}\label{lem:injection}
    Suppose $X$ is a 3-dimensional closed convex set in $\H^3$ and $p \in \partial X$.Then the following statements hold.
\begin{enumerate}[label=(\roman*)]
    \item There exists a normal vector pointing toward the interior of $X$ at $p$. 
    
    \item  If $X$ has a supporting plane at $p$ intersecting $X$ only at $p$, then there exists a normal vector pointing toward the interior of $X$ such that the supporting plane perpendicular to the vector intersects $X$ only at $p$.
 
    \item  If the normal vector to a supporting plane at $p$ points toward the interior of $X$,  then the shortest distance projection from $\partial X$ to the plane is injective in a neighborhood of $p$.

    \item  Suppose $X$ has a supporting plane at $p$ which intersects $X$ only at $p$, then the  point $p$ has a compact neighborhood $U\subset\partial X$, which is a convex cap. 

    \item The compact convex cap  $U$ in $\partial X$ produced in (iv) has the convexity property that for any $x,y \in U$, every shortest distance path $\gamma$ from $x$ to $y$ in $\partial X$ is contained in $ U.$ 
\end{enumerate}
\end{lemma}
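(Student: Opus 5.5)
I will work in the Klein (projective) model of $\H^3$ centered at $p$, i.e., choose coordinates so that $p$ is the center of the unit ball. Then hyperbolic planes and geodesics through $p$ are Euclidean planes and lines, convexity is Euclidean convexity, and the hyperbolic angles at $p$ are Euclidean angles. Moreover, the shortest distance projection to a hyperbolic plane $\Pi$ through $p$ moves points along hyperbolic geodesics orthogonal to $\Pi$. In this model these are Euclidean lines through the pole of $\Pi$, and near $p$ they are close to Euclidean-orthogonal lines. This reduces most statements to Euclidean convex geometry near $p$.

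\textbf{Parts (i) and (ii).} For (i), let $N_p\subset \mathbb S^2\subset T_p\H^3$ be the set of inward unit normals, the negatives of the exterior normals. Let $D_p$ be the set of unit directions pointing toward $X$. $N_p$ is a closed convex spherical set containing no antipodal pair, since $X$ is 3-dimensional. Its polar cone is the closure of the tangent cone of $X$ at $p$, which has nonempty interior. I will show $N_p$ meets the interior of the tangent cone, the directions pointing toward $\mathring X$, by a separation argument. If the convex cones $C(N_p)$ and $\inte T_pX$ were disjoint, a separating vector $v$ would satisfy $\lan v,w\ran\le 0$ on $T_pX$. Then $-v\in$ the exterior normal cone, so $v\in C(N_p)$, and $\lan v,n\ran\ge 0$ on $C(N_p)$. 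Since $v\in C(N_p)$ and $v$ is a polar element, $\lan v,v\ran\le 0$, so $v=0$, a contradiction. Lemma \ref{lem:convex-hemisphere} gives the quantitative version. An $\alpha$ in $N_p$ nearest to being interior lies in the interior of $T_pX$, because $N_p\subset B_\alpha(\pi/2)$ and duality give $\lan\alpha,w\ran>0$ for all $w\in N_p$.

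For (ii), the supporting planes meeting $X$ only at $p$ correspond to the interior of $N_p$ in $\mathbb S^2$. Indeed, if $v$ is in the relative interior, $X\cap v^\perp$ is contained in the intersection of all supporting planes near $v$, which is $\{p\}$. Conversely, a strictly supporting plane gives an open set of normals by compactness of small spheres in $X$ near $p$. Hence the interior of $N_p$ is open and nonempty. The vector from (i), perturbed into this interior while still pointing into $\mathring X$ (an open condition), gives (ii).

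\textbf{Parts (iii) and (iv).} For (iii), let $v$ point toward $q\in\mathring X$. A small ball $B$ around $q$ lies in $X$. For $x\in\partial X$ near $p$, the cone from $x$ over $B$ lies in $X$. The projection fibers near $p$ are geodesics nearly parallel to $v$ and pass through points with direction close to $v$, which points into this cone. So each fiber meets $\partial X$ near $p$ in at most one point: if it met twice, the nearer point would lie in the interior of the convex hull of the farther point and $B$. For (iv), take the plane $\Pi$ from (ii) and push it parallel (equidistantly along the normal geodesic toward $q$) to a plane $\Pi_t$ at small distance $t$. Because $\Pi\cap X=\{p\}$ and $X$ is closed, the set $X\cap\{\text{between }\Pi,\Pi_t\}$ shrinks to $p$ as $t\to 0$ (a compactness argument in the ball model, handling ideal points through the strictness). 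So for small $t$ the cap $K_t=\partial X\cap$ slab is compact and inside the injectivity neighborhood from (iii). Since $\Pi_t$ is a plane rather than an equidistant surface, the projection to $\Pi_t$ is injective by the same argument as (iii), and the boundary curve $\Pi_t\cap\partial X$ is convex. \textbf{This step is the main obstacle}: it requires verifying uniform injectivity of the projection onto $\Pi_t$ for all small $t$, not just onto $\Pi$. I would handle it by noting the fiber directions depend continuously on $t$ and on the base point, and by using the openness in (iii).

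\textbf{Part (v).} For (v), let $H$ be the closed half-space bounded by $\Pi_t$ containing $K=U$. A path in $\partial X$ leaving $H$ can be replaced by its image under the nearest-point retraction of $\partial X\setminus \mathring H$ onto the convex curve $\partial U$, or more concretely by the arc of $\Pi_t\cap\partial X$. The nearest-point projection in $\H^3$ onto the convex set $X\cap H$ is $1$-Lipschitz and maps $\partial X$ outside $H$ into $\partial(X\cap H)$. Composing with the projection from the flat face $\Pi_t\cap X$, this produces a path in $U\cup(\Pi_t\cap X)$ no longer than the original. The portion on the flat face is then replaced by the boundary curve. The boundary curve is no longer than the face portion only in the right sense, so I would instead argue directly: the excursion outside $H$ is strictly longer than the corresponding chord in $\Pi_t$. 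A convex curve arc in $\partial U$ joining the two exit points is a shortest path in the convex surface $\partial(X\cap H)$, which is a legitimate convex surface. Strictness, using that $X\setminus H$ is not contained in $\Pi_t$, shows a shortest path cannot leave $U$.
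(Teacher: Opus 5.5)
Your proposal has genuine gaps in parts (ii) and (v). Parts (i), (iii) and (iv) are sound.

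\textbf{Part (ii).} The claim that a strictly supporting plane ``gives an open set of normals'' is false, and so is the conclusion that $N_p$ has nonempty interior in $\mathbb S^2$. Take $X$ a closed hyperbolic ball, or any $X$ that is smooth and strictly convex at $p$. The tangent plane at $p$ meets $X$ only at $p$, yet $N_p$ is a single point. Your compactness argument on small spheres controls $X$ at a fixed distance from $p$. It says nothing about the part of $X$ that approaches $v^\perp$ tangentially near $p$. So there is no open set to perturb into; only the implication ``interior of $N_p$ $\Rightarrow$ strict support'' holds. The missing idea is the convex-combination trick used in the paper. Let $\alpha$ be the exterior normal from (i) and $\alpha'$ the exterior normal of the strictly supporting plane, and set $\beta=(1-\epsilon)\alpha+\epsilon\alpha'$. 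For every direction $\eta$ pointing toward $X$ we get $\langle\beta,\eta\rangle\le\epsilon\langle\alpha',\eta\rangle<0$, so $\beta$ is strictly supporting for every $\epsilon\in(0,1]$. Since pointing into $\mathring X$ is an open condition, $-\beta$ still points into $\mathring X$ for small $\epsilon$.

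\textbf{Part (v).} The argument does not close, and the statement it ends on is false. An arc of $\partial U$ is generally not a shortest path in $\partial(X\cap H)$, because the chord across the flat face $\Pi_t\cap X$ is shorter. For the same reason, projecting onto $X\cap H$ is the wrong move. It sends the excursion into the flat face, which is not part of $\partial X$, and there is no length-nonincreasing way back to $\partial U$.

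The missing idea is to show the excursion avoids the cylinder $Y=\pi_t^{-1}(\Pi_t\cap X)$ of geodesics perpendicular to $\Pi_t$ through the face. This is exactly where the uniform injectivity from (iv) is used.
\begin{itemize}
\item Fix a larger cap $S_N$ on which the projection to every $\Pi_t$, $t$ small, is injective.
\item Choose $U$ with intrinsic diameter less than $d(\Pi_t,\Pi_N)$, so a shortest path between points of $U$ stays in $S_N$.
\item Injectivity together with $\pi_t(U)=\Pi_t\cap X$ gives $\pi_t(S_N\setminus U)\cap(\Pi_t\cap X)=\emptyset$. Hence the excursion lies outside $Y$.
\item Apply nearest-point projection onto $Y$, then $\pi_t$. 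This maps the excursion to a path in $\partial(\Pi_t\cap X)=\partial U\subset\partial X$ with the same endpoints.
\item Both maps are $1$-Lipschitz, and $\pi_t$ strictly shortens paths not contained in $\Pi_t$. This gives a strictly shorter path in $\partial X$, a contradiction.
\end{itemize}

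\textbf{The other parts.} In (i), your separation argument has a sign slip. The condition $\langle v,w\rangle\le 0$ on $T_pX$ makes $v$ an exterior normal, so $-v\in C(N_p)$, and then $\langle v,-v\rangle\ge0$ forces $v=0$. With Lemma \ref{lem:convex-hemisphere}, this matches the paper.

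Your (iii) takes a different and cleaner route than the paper's. The paper argues with sequences $a_n,b_n\to p$ and limits of normals. You use a ball $B\subset\mathring X$ around $q$ and the fact that the open convex hull of a point of $X$ and $B$ lies in $\mathring X$. In the Klein model centered at $p$, the fibers of projection to $\Pi_t$ are lines through the pole of $\Pi_t$. This pole recedes to infinity as $t\to0$, so the cone argument works uniformly in $t$. The ``main obstacle'' you flagged in (iv) is therefore already handled by your own (iii).
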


\begin{proof}  
We verify \emph{(i)}--\emph{(v)} in order.

\smallskip
\noindent\emph{(i)} Let $T\subset \mathbb S^2$ consist of all unit vectors at $p$ pointing toward some $x\in X$, and $A\subset \mathbb S^2$ be the set of all exterior unit normal vectors of $\partial X$ at $p$.
We know both $A$ and $T$ are convex sets in $\mathbb S^2$. Since $X$ is 3-dimensional, $A$ does not contain a pair of antipodal points since otherwise, there are two supporting planes of $X$ at $p$ with opposite orientations, which is contradictory to $\dim(X)=3$. By Lemma \ref{lem:convex-hemisphere}, there exists $\alpha \in A$ such that $A\subset B_\alpha (\pi/2)$, i.e., $\lan -\alpha ,\delta\ran <0$ for all $\delta\in A$. By the basic duality theorem between the tangent cone and normal cone (\cite{Alexandrov}), the closure  $\overline{T}$ of  $T$ consists of all $y \in \mathbb S^2$ such that $\lan y, x \ran\leq0$ for all $x \in A$. Therefore $-\alpha$ is contained in $(\overline{T})^\circ=\mathring T$, and thus $-\alpha$ points toward $\mathring X$.

\smallskip
\noindent\emph{(ii)}
By assumption, there exists a supporting plane at $p$ with unit exterior normal vector $\alpha'$ such that the plane intersects $X$ only at $p$.  Using $\alpha'$ and the unit exterior normal vector $\alpha$ constructed in \emph{(i)}, we produce the required normal vector of $\partial X$ at $p$ as follows. 
Let $\beta$ be the normal vector of $\partial X$ at $p$ given by $(1-\epsilon)\alpha+\epsilon\alpha'$, for some small $\epsilon>0$.  Since $-\alpha$ points toward $\mathring X$, we can pick $\epsilon$ sufficiently small, such that $-\beta$ also points toward $\mathring X$. Furthermore, for any $\eta\in T$, $\lan\alpha ,\eta\ran\leq0$ since $\alpha \in A$. Hence, for all $\eta \in T$, we have
\begin{equation} \label{supP}
    \lan\beta,\eta\ran=\lan(1-\epsilon)\alpha+\epsilon\alpha',\eta\ran
    \leq\lan\epsilon\alpha',\eta\ran<0,
\end{equation}
where the last inequality follows from the condition that the supporting plane perpendicular to $\alpha'$ intersects $X$ only at $p$. The inequality \eqref{supP} then implies that the supporting plane perpendicular to $\beta$ intersects $\partial X$ in a single point $p$.

\begin{figure}[ht!]
\centering
\begin{overpic}[width=0.95\textwidth]{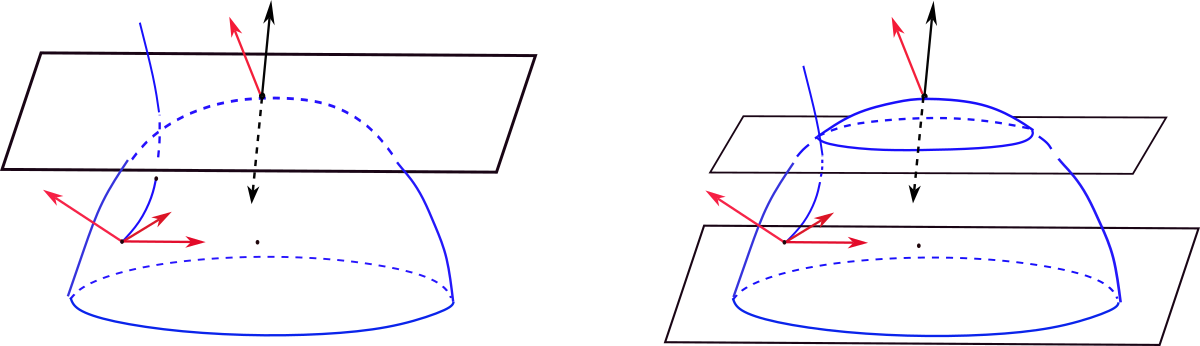}
    \put(18,28){$\delta$}
    \put(24,28){$\alpha$}
    \put(34,20){$\Pi_\infty$}
    \put(23,19){$p$}
    \put(22,12){$-\alpha$}
    \put(23,8){$q$}
    \put(18,8){$\eta_n$}
    \put(15,11){$\alpha_n$}
    \put(2,11){$\delta_n$}
    \put(10,7){$a_n$}
    \put(10,12){$b_n$}

     \put(73,28){$\delta$}
    \put(82,22){$\swarrow$}
    \put(85,23){$S_m$}

    \put(96,12){$\longleftarrow S_n$}

     \put(73,28){$\delta$}
    \put(79,28){$\alpha$}
    \put(89,16){$\Pi_m$}
    \put(94,7){$\Pi_n$}
    \put(65.2,12){$b_n$}

    \put(78,22){$p$}
    \put(77,12){$-\alpha$}
    \put(78,8){$q$}
    \put(73,8){$\eta_n$}
    \put(70,11){$\alpha_n$}
    \put(57,11){$\delta_n$}
    \put(65,7){$a_n$}
\end{overpic}
\caption{Pictures for part(iii) and part (iv).}
\label{5.3lemma1}
\end{figure}

\smallskip
\noindent\emph{(iii)}
Let $\Pi_{\infty}$ be the supporting plane to $S=\partial X$ at $p$ whose unit normal vector $-\alpha$ points toward $\mathring X$, $W^+_{\infty}$ be the closed half-space bounded by $\Pi_{\infty}$ disjoint from $\mathring X$,  and $\pi_{\infty}$ be the shortest-distance projection from $\H^3$ to $\Pi_{\infty}$. See the left figure in Figure \ref{5.3lemma1}.
    We will show that $\pi_{\infty}$ is injective on a neighborhood of $p$ in $S$.
    Suppose otherwise, there exist two infinite sequences of points $\{a_n\},\{b_n\}\subset S$ with $a_n,b_n\to p$ as $n\to\infty$, such that for all $n$,
    \begin{enumerate}[label=(\arabic*)]
        \item $a_n\neq b_n$;  and 
       
        \item  the oriented geodesic from $a_n$ to $b_n$             
        intersects $\Pi_{\infty}$ orthogonally and points into $W^+_{\infty}$, and in particular $\pi_{\infty}(a_n)=\pi_{\infty}(b_n)$.
    \end{enumerate}
    Let $\alpha_n$ be the unit vector at $a_n$ of the oriented geodesic from $a_n$ to $b_n$,  and $\delta_n$ be an exterior unit normal vector at $a_n$ to $S$. Since $b_n \in X$, 
    \begin{equation}\label{eq:inner-product}
        \lan \alpha_n,\delta_n\ran\leq 0.
    \end{equation}
    Since $\lim_{n\to\infty}a_n=p$, by passing to a subsequence, we may assume that $\{\delta_n\}$ converges to an exterior normal vector $\delta$ of $\partial X$ at $p$. 
    Since the geodesic $\gamma_n$ passing through $a_n$ and $b_n$ is orthogonal to $\Pi_{\infty}$, we know that the parallel transport of $\alpha_n$ along $\gamma_n$ to $\gamma_n\cap\Pi_{\infty}$ is a normal vector to $\Pi_{\infty}$. By condition (2) above and $\lim_{n\to\infty}a_n=p$, $\{\alpha_n\}$ converges to a normal vector of $S$ at $p$ perpendicular to $\Pi_{\infty}$ and pointing toward $W^+_{\infty}$. Hence $\lim_{n\to\infty}\alpha_n=\alpha$. By \eqref{eq:inner-product}, we have
    \begin{equation}\label{eq:inner-product3}
        \lan \alpha,\delta\ran\leq 0.
    \end{equation}

    On the other hand, let $q\in \mathring X$, such that $-\alpha$ is the unit tangent vector of the geodesic from $p$ to $q$. Consider the unit tangent vector $\eta_n$ at $a_n$ of the geodesic from $a_n$ to $q$. Then
    \begin{equation}\label{eq:inner-product2}
        \lan \eta_n,\delta_n\ran\leq 0.
    \end{equation}
    By $\lim_{n\to\infty}a_n=p$, $\{\eta_n\}$ converges to the unit vector at $p$ point toward $q$, and therefore $\lim_{n\to\infty}\eta_n=-\alpha$. By \eqref{eq:inner-product2}, we have
    \begin{equation}\label{eq:inner-product4}
        \lan -\alpha,\delta\ran\leq 0.
    \end{equation}

    Then by \eqref{eq:inner-product3}, \eqref{eq:inner-product4}, we have $\lan -\alpha,\delta\ran=0$. Since $\delta$ is an exterior unit normal vector to $S$ at $p$, we see $-\alpha$ and hence the geodesic $[p, q]$ are contained in the supporting plane of $S$ with normal vector $\delta$.  But this contradicts  $q\in \mathring X$.

\smallskip
\noindent\emph{(iv)}
By \emph{(ii)}, we can take $\alpha$ to be a unit exterior normal vector to the supporting plane $\Pi_{\infty}$ to $X$ at $p$ such that $-\alpha$ points into $\mathring X$ and $\Pi_{\infty} \cap X =\{p\}$. See the right figure in Figure \ref{5.3lemma1}. Let $q \in \mathring X$ be a point such that $-\alpha$ is tangent to the geodesic segment $[p,q]$ and  $q_n \in [p,q]$ be the point such that the distance $d(p,q_n)=d(p, q)/n$. The hyperbolic plane through $q_n$ perpendicular to $[p,q]$ is denoted by $\Pi_n$,  and the two closed half-spaces bounded by $\Pi_n$ are denoted by $W^{\pm}_n$ such that $p \in W^+_n$. Let the shortest distance projection from $\H^3$ to $\Pi_n$ be $\pi_n$. By construction, we see that the planes $\Pi_n$ converge in Hausdorff distance to $\Pi_{\infty}$,  $\lim_{n}\pi_n =\pi_{\infty}$, and $W^+_n \cap X$ converge in Hausdorff distance to $W^+_{\infty} \cap X =\Pi_{\infty} \cap X =\{p\}$.  This shows that $W^+_n \cap X$ is a compact convex set for $n$ large. On the other hand, since $q \in \mathring X$, $W^+_n \cap X$ is 3-dimensional. It follows that $\partial (W^+_n \cap X)$ is a closed convex surface which is a union of $S_n \cup (\Pi_n \cap X)$ where $S_n =\partial X \cap W^+_n$. 
Clearly $S_n$ is a closed neighborhood of $p$ in $\partial X$.
Furthermore, due to Hausdorff convergence of $W^+_n \cap X$ to $\{p\}$, the diameters of $W^+_n\cap X$ and $S_n$ tend to zero. 

We claim that for $n$ large, and any $m \geq n$, the shortest distance projection $\pi_m: S_n \to \Pi_m$ is injective. In particular, it shows $S_n$ is a convex cap when we take $m=n$. The proof follows exactly the same argument that we used in (iii). All we need to do is replace $\pi_{\infty}$ by $\pi_m$, $\Pi_{\infty}$ by $\Pi_m$, $S$ by $S_n$,  $W^+_{\infty}$ by $W^+_m$, $a_n, b_n \in S$ by $a_n, b_n \in S_n$ such that $\pi_{m_n}(a_n)=\pi_{m_n}(b_n)$ for some $m_n$ ($\geq n$) depending on $n$.  The key point in the proof of part (iii) is that $\lim_n a_n=\lim_n b_n =p$.  This follows since $W_n^+\cap X$ converges to $\{p\}$ in the Hausdorff sense. We omit the details.

\smallskip
\noindent\emph{(v)}
Using part \emph{(iv)}, let $N$ be an integer such that for all $m \geq N$, the shortest-distance projection $\pi_m: S_N \to \Pi_m$ is injective.  Take $m$ large such that the diameter of $S_m$ is smaller than the distance between $\Pi_m$ and $\Pi_N$. Let $Y_m =\pi_m^{-1}(X \cap \Pi_m)$ which is the union of all geodesics perpendicular to $\Pi_m$ at points in $\Pi_m \cap X$. $Y_m$ is a 3-dimensional closed convex set in $\H^3$ due to the convexity of $\Pi_m \cap X$.  Since $S_m$ is a convex cap, by the invariance of domain theorem mentioned before, $\pi_m(S_m) =X \cap \Pi_m$. By the injectivity of $\pi_m|_{S_N}$, we see that $\pi_m(S_N -S_m) \cap \pi_m(S_m) =\pi_m(S_N-S_m) \cap (X \cap \Pi_m) =\emptyset.$ This shows that $S_N-S_m$ is disjoint from $Y_m$. See Figure \ref{5.3lemma2}.

\begin{figure}[h!]
  \centering  
\begin{overpic}[width=0.65\textwidth]{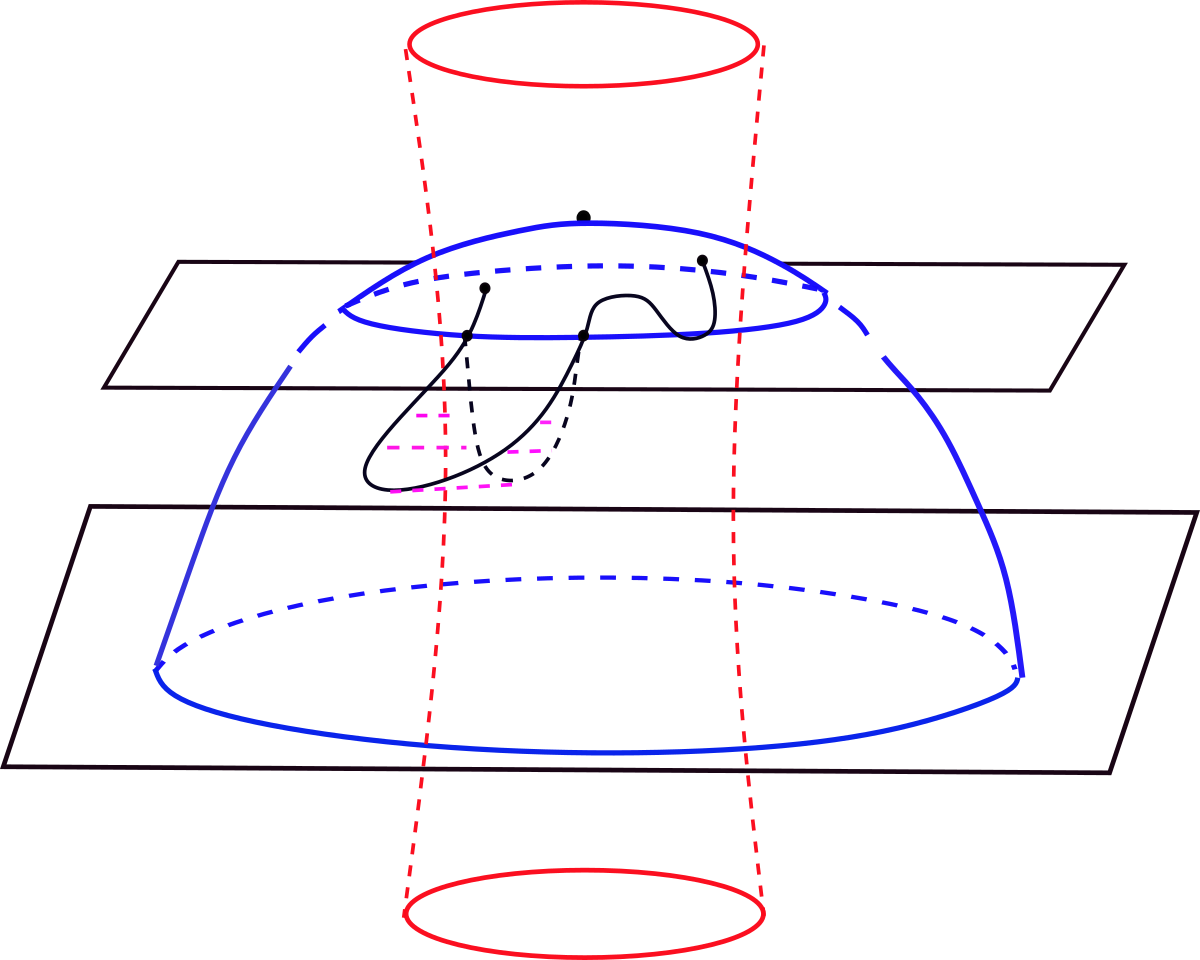}
    \put(43,53){$\gamma'''$}
    \put(40,48.5){$x'$}
    \put(49,48.5){$y'$}
    \put(26,40){$\gamma'$}
    \put(48,40){$\gamma''$}
    \put(40,58){$x$}
    \put(56,58.5){$y$}
    \put(78,53){$\Pi_m$}

    \put(88,26){$\Pi_N$}
    \put(88,42){$\longleftarrow S_N-S_m$}

    \put(55,63){$\swarrow$}
    \put(57,66){$S_m$}
    \put(67,6){$Y_m$}

\end{overpic}
\caption{Picture for part (v).}
\label{5.3lemma2}
\end{figure}

Now we prove that $S_m$ is convex in the sense that for any $x, y \in S_m$, each shortest path $\gamma$ in $\partial X$ from $x$ to $y$ is contained in $S_m$. First of all, since the diameter of $S_m$ is smaller than the distance between $\Pi_m$ and $\Pi_N$, we see that $\gamma \subset W^+_N$, and hence $\gamma \subset S_N$. If $\gamma$ is not contained in $S_m$, there exists a subarc $\gamma'$ of $\gamma$ joining $x'$ to $y'$ in $\partial S_m$ such that $\gamma' -\{x', y'\} \subset S_N-S_m$.  Let $\pi_Y$ be the shortest distance projection from $\H^3$ to $Y_m$. 
Since $\gamma' -\{x', y'\}$ is contained $S_N-S_m$, we see that $\gamma'$ is disjoint from the interior of $Y_m$. It follows that the shortest-distance projection $\gamma'' =\pi_Y(\gamma')$ of $\gamma'$ to $\partial Y_m$ is a path from $x'$ to $y'$ whose length is at most that of $\gamma'$.  The decrease in length is due to the fundamental fact that the shortest-distance projection is distance-decreasing, i.e., 1-Lipschitz. See, for instance, \cite[Lemma 1.3.4]{EM87}, or \cite[Theorem 11.2]{Busemann}. 
Consider the path $\gamma'''=\pi_m(\gamma'') \subset \partial S_m$ which joins $x'$ to $y'$.  By the 1-Lipschitz property of $\pi_m$, we see that $l(\gamma''') \leq l(\gamma'')$. However, due to $\gamma' -\{x, y\} \subset S_N -S_m$ and $\pi^{-1}_Y(\partial S_m) \subset \Pi_m$, $\gamma''$ is not contained in $\Pi_m$. Therefore $l(\gamma''') < l(\gamma'')$.  In summary, we have $l(\gamma''') < l(\gamma'') \leq l(\gamma')$. 
Replacing $\gamma'$ portion of $\gamma$ by $\gamma'''$, we obtain a path in $\partial X$ from $x$ to $y$ whose length is strictly less than the length of the shortest path $\gamma$. This contradiction shows $\gamma \subset S_m.$
\end{proof}

Now we finish the proof of Theorem \ref{main2} for case (iii).
Suppose otherwise, 
without loss of generality, we assume that $\dim(X_1)=2$ and $\dim(X_2)=3$. 
Let $D(X_1)$ be the double of $X_1$ as in Section \ref{subsec:2d-2d}, where $Y_1$ is the boundary of $X_1$ as a 2-dimensional convex set in $\H^2$, and  $f\from D(X_1)\to \partial X_2$ be an isometry.

By construction, there exists an isometric reflection $\sigma \from D(X_1)\to D(X_1)$ that maps $\mathring{X}_1^\pm$ to $\mathring{X}^{\mp}_1$ and has $Y_1$ as the fixed point set.
Then $g=f\circ  \sigma \circ f^{-1}\from \partial X_2\to\partial X_2$ is an isometry reflection whose fixed point set is $f(Y_1)$.
By our result on Theorem \ref{main2} case (i), $g$ is the restriction of some $\phi\in\Isom(\H^3)$ to $\partial X_2$. Since $g \circ g=id$, we see that $\phi \circ \phi$ fixes every point in $\partial X_2$. But $\partial X_2$ is not contained in a plane. It follows that $\phi \circ \phi$ is the identity map. 
Since $Y_1$ has infinitely many points, so does $f(Y_1)$. Then the isometric involution $\phi$ has infinitely many points. Hence, it can only be a hyperbolic reflection about a plane or a $\pi$-rotation about a geodesic $L$.  We claim that $\phi$ cannot be a $\pi$-rotation. Suppose otherwise that $\phi$ is a $\pi$-rotation, 
since $f(Y_1) \subset L \cap \partial X_2$ and $f(Y_1)$ has infinitely many points, we can take three different points in $f(Y_1)\subset L$. Considering the supporting plane $\Pi'$ of $X_2$ at the point that lies between the other two points on the geodesic $L$, we see that the remaining two points cannot lie on the opposite sides of $\Pi'$, and hence they are all contained in $\Pi'$. This shows that the supporting plane $\Pi'$ contains $L$ and $\Pi'$ is invariant under $\phi$. In particular, $\phi$ interchanges the two sides of the supporting plane. As a consequence, $X_2$ is contained in the supporting plane. But this contradicts $\dim(X_2)=3$.

Therefore, $\phi$ must be a hyperbolic reflection about a hyperbolic plane $P $. Let $\partial X_2^\pm=f(X_1^\pm)$ be the two parts of $\partial X_2$ separated by $P$.
Since $\dim(X_2)=3$, there exists a closed half-space $Q_1$ with boundary plane $\Pi_1$ such that the hyperbolic distance between $Q_1$ and $P $ is positive and $S=Q_1 \cap \partial X_2^+$ is a non-empty surface with boundary. Because all the ideal points $\overline{X_1} \cap \partial \H^3$ of $X_1$ are contained in the conformal boundary at infinity of the plane containing $X_1$,  the ideal points $\overline{X_2} \cap \partial \H^3$ of $X_2$ are contained in the boundary of $P $ at infinity. It follows that $S$ is compact by the choice of $Q_1$. Furthermore, $S$ is a convex surface since it is contained in the boundary of the convex set $X_2 \cap Q_1$. The compactness of $S$ implies there exists $p\in S$ such that
\begin{equation}
    d(p,\Pi_1)=\max\{d(x,\Pi_1):x\in S\}:=\kappa >0. 
    \end{equation}
Consider the equidistant surface $\tilde{S}$ passing through $p$ that consists of all points $x$ of distance $\kappa$ to $\Pi_1$. It is well known that $\tilde{S}$ is a strictly convex surface. Let $K$ be the closed convex set bounded by  $\Pi_1\cup\tilde{S}$. By the maximality of $d(p, \Pi_1)$, we have $S\subset K$. By strict convexity, the supporting plane of $\tilde{S}$ at $p$ intersects $\tilde S$ only at $p$. Therefore, due to $S \subset K$, the same plane is a supporting plane for $S$ and intersects $S$ only at $p$.  By Lemma \ref{lem:injection} (iv) and (v) applied to $\partial X_2$, $p$ has a compact neighborhood $U$ which is a convex cap and is convex with respect to the intrinsic path metric of ${\partial X_2}$.

Let $\Pi$ be the plane containing $\partial U$,  and $W$ be the compact convex set bounded by  $U$ and $\Pi$. 
Let $\tilde{W}$ be the union of $W$ and the image of $W$ under the hyperbolic reflection about $\Pi$.  We claim that $\tilde{W}$ is a convex set in $\H^3$. Indeed, for any $a,b\in\tilde{W}$, if they lie in the same side of $\Pi$, then the geodesic segment $[a,b]$ joining $a$ and $b$ is contained in $\tilde{W}$ since $W$ is convex. If $a,b$ lie on opposite sides of $\Pi$, we project $a,b$ to $\Pi$ by the short-distance projection $\pi$ and obtain $a'=\pi(a),b'=\pi(b)$, respectively. Since $\pi(U)=\Pi\cap X_2$ is convex, the geodesic segment $[a',b']$ joining $a'$ and $b'$ is contained in $W \subset \tilde{W}$. Since  $a'=\pi(a)$ and $b'=\pi(b)$, $a,b,a', b'$ are contained in a hyperbolic plane $\Pi_2$ which is the preimage under $\pi$ of a geodesic containing $a', b'$. It follows that two geodesic segments $[a,b]$ and $[a', b']$ must intersect in a point $c$ in the plane $\Pi_2$. 
Without loss of generality, we assume $a\in W$ and $b$ is in the image of the reflection of $W$ across  $\Pi$. By the convexity of $W$ and $c\in W$, we then know $[a,c], [b,c] \subset \tilde{W}$, and hence $[a,b] \subset \tilde{W}$. This shows that $\tilde{W}$ is a convex set, and  $\partial\tilde{W}$ is a closed convex surface. By construction, $\partial\tilde{W}$ is the metric double of $U$ across $\partial U$.

Let $V=f^{-1}(U)\subset X_1^+$. Consider the metric double $D(V)$ of $V$ along its boundary. By Lemma \ref{lem:injection} (iv), $U$ is convex with respect to the intrinsic path metric of $X_1^+$. Since  $X_1^+$ is isometric to a convex set in $\H^2$ and $V$ is isometric to $U$, we see  $V$ is isometric to a 2-dimensional closed convex set in $\H^2$. In particular,  the metric double surface $D(V)$ is a degenerate closed convex surface as we discussed in \S5.2. Furthermore, by construction,  $D(V)$ is isometric to $\partial\tilde{W}$, which is the boundary of a compact convex body $\tilde W$ in $\H^3$. By Pogorelov's theorem \ref{pogo1} applied to $D(V)$ and $\partial \tilde W$, we obtain a contradiction on the dimensions of $V$ and $\tilde W$. This ends the proof of case (iii).

\bibliography{reference2}
\bibliographystyle{alpha}

\end{document}